\documentclass[12pt]{article}
\usepackage{amsmath, amsthm, amssymb}
\usepackage[all]{xy}
\usepackage{fullpage}
\usepackage{titlesec}
\usepackage{mathrsfs}
\usepackage{amsbsy}
\usepackage{turnstile}
\usepackage{bbm}
\usepackage[pdftex,bookmarks=true]{hyperref}

\titleformat{\section}{\normalsize\bfseries}{\thesection}{1em}{}
\titleformat{\subsection}{\normalsize\bfseries}{\thesubsection}{1em}{}

\numberwithin{equation}{subsection}

\theoremstyle{plain}

\newtheorem{PropSub}[subsection]{Proposition}
\newtheorem{LemSub}[subsection]{Lemma}
\newtheorem{CorSub}[subsection]{Corollary}
\newtheorem{ThmSub}[subsection]{Theorem}
\newtheorem{UnlabelledPropSub}[subsection]{}

\theoremstyle{definition}

\newtheorem{DefSub}[subsection]{Definition}
\newtheorem{ExaSub}[subsection]{Example}
\newtheorem{RemSub}[subsection]{Remark}
\newtheorem{ParSub}[subsection]{}

\newcommand{\bref}[1]{\textnormal{\textbf{\ref{#1}}}}
\newcommand{\pbref}[1]{\textnormal{(\textbf{\ref{#1}})}}

\newcommand{\bD}{\ensuremath{\boldsymbol{D}}}

\newcommand{\bF}{\ensuremath{\boldsymbol{F}}}
\newcommand{\bG}{\ensuremath{\boldsymbol{G}}}
\newcommand{\bH}{\ensuremath{\boldsymbol{H}}}

\newcommand{\bJ}{\ensuremath{\boldsymbol{J}}}
\newcommand{\bK}{\ensuremath{\boldsymbol{K}}}

\newcommand{\bP}{\ensuremath{\boldsymbol{P}}}
\newcommand{\bQ}{\ensuremath{\boldsymbol{Q}}}
\newcommand{\bS}{\ensuremath{\boldsymbol{S}}}
\newcommand{\bT}{\ensuremath{\boldsymbol{T}}}

\newcommand{\A}{\ensuremath{\mathscr{A}}}
\newcommand{\B}{\ensuremath{\mathscr{B}}}
\newcommand{\C}{\ensuremath{\mathscr{C}}}
\newcommand{\D}{\ensuremath{\mathscr{D}}}
\newcommand{\E}{\ensuremath{\mathscr{E}}}

\newcommand{\J}{\ensuremath{\mathscr{J}}}
\newcommand{\K}{\ensuremath{\mathscr{K}}}
\newcommand{\sL}{\ensuremath{\mathscr{L}}}
\newcommand{\M}{\ensuremath{\mathscr{M}}}
\newcommand{\sS}{\ensuremath{\mathscr{S}}}
\newcommand{\U}{\ensuremath{\mathscr{U}}}
\newcommand{\V}{\ensuremath{\mathscr{V}}}
\newcommand{\W}{\ensuremath{\mathscr{W}}}
\newcommand{\X}{\ensuremath{\mathscr{X}}}

\newcommand{\tL}{\ensuremath{\widetilde{\mathscr{L}}}}

\newcommand{\eA}{\ensuremath{\pmb{\mathscr{A}}}}
\newcommand{\eB}{\ensuremath{\pmb{\mathscr{B}}}}
\newcommand{\eC}{\ensuremath{\pmb{\mathscr{C}}}}

\newcommand{\eL}{\ensuremath{\pmb{\mathscr{L}}}}

\newcommand{\teL}{\ensuremath{\pmb{\widetilde{\mathscr{L}}}}}

\newcommand{\aeB}{\ensuremath{\underline{\pmb{\mathscr{B}}}}}
\newcommand{\aeC}{\ensuremath{\underline{\pmb{\mathscr{C}}}}}

\newcommand{\aeX}{\ensuremath{\underline{\pmb{\mathscr{X}}}}}
\newcommand{\aeL}{\ensuremath{\underline{\pmb{\mathscr{L}}}}}
\newcommand{\aeU}{\ensuremath{\underline{\pmb{\mathscr{U}}}}}
\newcommand{\aeV}{\ensuremath{\underline{\pmb{\mathscr{V}}}}}
\newcommand{\aeW}{\ensuremath{\underline{\pmb{\mathscr{W}}}}}

\newcommand{\aetL}{\ensuremath{\widetilde{\underline{\pmb{\mathscr{L}}}}}}

\newcommand{\VCAT}{\ensuremath{\V\textnormal{-\textbf{CAT}}}}
\newcommand{\WCAT}{\ensuremath{\W\textnormal{-\textbf{CAT}}}}
\newcommand{\XCAT}{\ensuremath{\X\textnormal{-\textbf{CAT}}}}
\newcommand{\LCAT}{\ensuremath{\sL\textnormal{-\textbf{CAT}}}}

\newcommand{\MMCCAATT}{\ensuremath{\mathfrak{MCAT}}}
\newcommand{\SSMMCCAATT}{\ensuremath{\mathfrak{SMCAT}}}
\newcommand{\CCllSSMMCCAATT}{\ensuremath{\mathfrak{ClSMCAT}}}
\newcommand{\CCllCCAATT}{\ensuremath{\mathfrak{ClCAT}}}
\newcommand{\TWOCCAATT}{\ensuremath{\mathfrak{2CAT}}}

\newcommand{\CC}{\ensuremath{\mathbb{C}}}
\newcommand{\DD}{\ensuremath{\mathbb{D}}}
\newcommand{\tDD}{\ensuremath{\widetilde{\mathbb{D}}}}
\newcommand{\HH}{\ensuremath{\mathbb{H}}}
\newcommand{\tHH}{\ensuremath{\widetilde{\mathbb{H}}}}
\newcommand{\RR}{\ensuremath{\mathbb{R}}}
\newcommand{\TT}{\ensuremath{\mathbb{T}}}
\newcommand{\ONEONE}{\ensuremath{\mathbbm{1}}}

\newcommand{\tbH}{\ensuremath{\boldsymbol{\widetilde{H}}}}

\newcommand{\tD}{\ensuremath{\widetilde{D}}}
\newcommand{\tH}{\ensuremath{\widetilde{H}}}

\newcommand{\mor}{\ensuremath{\operatorname{\textnormal{\textsf{mor}}}}}
\newcommand{\Epi}{\ensuremath{\operatorname{\textnormal{\textsf{Epi}}}}}
\newcommand{\Mono}{\ensuremath{\operatorname{\textnormal{\textsf{Mono}}}}}
\newcommand{\StrMono}{\ensuremath{\operatorname{\textnormal{\textsf{StrMono}}}}}

\newcommand{\Ev}{\ensuremath{\textnormal{\textsf{Ev}}}}

\newcommand{\Set}{\ensuremath{\operatorname{\textnormal{\textbf{Set}}}}}

\newcommand{\RVect}{\ensuremath{R\textnormal{-\textbf{Vect}}}}
\newcommand{\Top}{\ensuremath{\operatorname{\textnormal{\textbf{Top}}}}}
\newcommand{\Conv}{\ensuremath{\operatorname{\textnormal{\textbf{Conv}}}}}
\newcommand{\Adj}{\ensuremath{\operatorname{\textnormal{\textbf{Adj}}}}}
\newcommand{\Mnd}{\ensuremath{\operatorname{\textnormal{\textbf{Mnd}}}}}

\newcommand{\subs}{\ensuremath{\subseteq}}

\newcommand{\op}{\ensuremath{\textnormal{op}}}

\newcommand{\btimes}{\ensuremath{\boxtimes}}

\begin{document}

\author{\normalsize  Rory B.B. Lucyshyn-Wright\thanks{Partial financial assistance by the Ontario Graduate Scholarship program is gratefully acknowledged.}\let\thefootnote\relax\footnote{Keywords: commutative monads; Fubini theorem; functionals; measures; distributions; closed categories; symmetric monoidal categories; enriched categories; symmetric monoidal adjunctions; convergence vector spaces}
\\
\small York University, 4700 Keele St., Toronto, ON, Canada M3J 1P3}

\title{\large \textbf{A general Fubini theorem for the Riesz paradigm}}

\date{}

\maketitle

\abstract{We prove an abstract Fubini-type theorem in the context of monoidal and enriched category theory, and as a corollary we establish a Fubini theorem for integrals on arbitrary convergence spaces that generalizes (and entails) the classical Fubini theorem for Radon measures on compact Hausdorff spaces.  Given a symmetric monoidal closed adjunction satisfying certain hypotheses, we show that an associated \textit{monad of natural distributions} $\DD$ is commutative.  Applying this result to the monoidal adjunction between convergence spaces and convergence vector spaces, the commutativity of $\DD$ amounts to a Fubini theorem for continuous linear functionals on the space of scalar functions on an arbitrary convergence space.
}

\section{Introduction} \label{sec:intro}

The aim of this paper is to establish a vast generalization of the classical Fubini theorem of Bourbaki for Radon measures on compact Hausdorff spaces (\cite{Bou}, Ch. III, \S 5, proved also by Edwards \cite{Edw}).  We prove a Fubini-type theorem that applies to a much wider class of spaces, including not only all topological spaces but also arbitrary \textit{convergence spaces} (see \cite{BeBu}).  Further, we reason in an abstract context of monoidal and enriched category theory, thus proving an abstract result on symmetric monoidal adjunctions and commutative monads that is applicable, in particular, to the axiomatic or \textit{synthetic} study of functional analysis in a closed category initiated by Lawvere \cite{Law:IntroCatsContPhys,Law:VoFu} and Kock \cite{Kock:ProResSynthFuncAn}.  Bourbaki's Fubini theorem for compact spaces is obtained as a corollary and so is proved by entirely new means.

By the Riesz representation theorem (which exemplifies the \textit{Riesz paradigm} of Lawvere \cite{Law:AxEd}), there is a bijective correspondence between $R$-valued Radon measures (for $R = \RR$ or $\CC$) on a compact Hausdorff space $X$ and continuous linear functionals $[X,R] \rightarrow R$ on the Banach space $[X,R]$ of continuous $R$-valued functions.  Working with such functionals $\mu:[X,R] \rightarrow R$ rather than their associated measures, we employ the notation $\int_x f(x) \;d\mu$ or $\int f \;d\mu$ for the value $\mu(f)$ of $\mu$ at $f \in [X,R]$.  Thus the classical Fubini theorem of Bourbaki \cite{Bou}, when restricted from locally compact to compact Hausdorff spaces $X$, $Y$, may be stated as follows:
\begin{UnlabelledPropSub} \label{prop:fubini}
Given continuous linear functionals $\mu:[X,R] \rightarrow R$ and $\nu:[Y,R] \rightarrow R$, there is a unique continuous linear functional $\mu \otimes \nu:[X \times Y, R] \rightarrow R$ such that
\begin{equation}\label{eqn:fubini}\int_y \int_x f(x,y) \;d\mu d\nu = \int f \;d(\mu \otimes \nu) = \int_x \int_y f(x,y) \;d\nu d\mu\end{equation}
for all continuous functions $f:X \times Y \rightarrow R$.  In particular, each integrand is a continuous function.
\end{UnlabelledPropSub}
This theorem has a natural interpretation in the language of \textit{cartesian closed categories}.  In particular, let us embed the category of topological spaces $\Top$ into the cartesian closed category $\X = \Conv$ of convergence spaces and continuous maps, so that we have for all objects $X$, $Z$ of $\X$ an \textit{exponential} or \textit{function space} $[X,Z]$.  In the case that $X$ is a compact Hausdorff space and $Z = R$, the function space $[X,R]$ coincides with the classical space $[X,R]$ considered above, and we can rewrite the Fubini equation \eqref{eqn:fubini} in the notation of \textit{lambda calculus} as
\begin{equation}\label{eqn:lambda_fubini}\nu(\lambda y.\mu(\lambda x.f(x,y))) = (\mu \otimes \nu)(f) = \mu(\lambda x.\nu(\lambda y.f(x,y)))\;.\end{equation}
From this perspective, the continuity of the integrands in \bref{prop:fubini} is automatic, as is the uniqueness of $\mu \otimes \nu$, and we have two natural candidates for $\mu \otimes \nu$, given by the leftmost and rightmost expressions in \eqref{eqn:lambda_fubini}, so that the Fubini theorem may be distilled to the statement that these are equal.

We prove the following:
\begin{ThmSub}\label{thm:fubini_for_conv}
The Fubini theorem of \bref{prop:fubini} holds for \textit{arbitrary} convergence spaces $X$ and $Y$, when $[X,R]$ and $[Y,R]$ are interpreted as the associated function spaces in the category of convergence spaces.
\end{ThmSub}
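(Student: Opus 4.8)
The plan is to obtain \bref{thm:fubini_for_conv} as a corollary of the abstract commutativity theorem for the natural distribution monad $\DD$ — the main result of this paper, proved below — applied to a specific symmetric monoidal closed adjunction between convergence spaces and convergence vector spaces. Note that \bref{thm:fubini_for_conv} is precisely \bref{prop:fubini} with the hypothesis of compactness deleted and with all function spaces read in $\Conv$. Concretely, write $\sL$ for the category of $R$-convergence vector spaces ($R = \RR$ or $\CC$), equipped with the symmetric monoidal closed structure whose internal hom is the space of continuous linear maps and whose tensor product represents continuous bilinear maps, and let $F \dashv G : \sL \to \Conv$ be the free/forgetful adjunction. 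The scalar object $R$ is the monoidal unit of $\sL$, its image $GR$ is the convergence space $R$, and for every convergence space $X$ the function space $[X,R]$ of $\Conv$ carries a canonical pointwise $R$-convergence-vector-space structure, making it an object of $\sL$. The natural distribution monad attached to this adjunction then has underlying functor $X \mapsto \DD X$, where $\DD X$ is the convergence space of continuous linear functionals $[X,R] \to R$, with unit $\eta_X : X \to \DD X$ sending $x$ to the Dirac functional $f \mapsto f(x)$ and multiplication given by integration of functional-valued functions; thus for $\mu \in \DD X$ we are exactly in the situation of \bref{prop:fubini}.

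First I would check that this adjunction satisfies the hypotheses of the abstract commutativity theorem. The ingredients to verify are: (i) $\Conv$ is cartesian closed, which is classical (\cite{BeBu}); (ii) $\sL$ is symmetric monoidal closed, with tensor and internal hom as described; (iii) $F$ is strong symmetric monoidal — equivalently, the canonical comparison $FX \otimes FY \to F(X \times Y)$ is an isomorphism in $\sL$ for all $X, Y$, and the free convergence vector space on the one-point space is $R$ — so that $F \dashv G$ is a symmetric monoidal adjunction; and (iv) the remaining standing hypotheses of the abstract theorem, namely the existence of the relevant (co)limits, the identification of $R$ as $G$ of the unit, and the coherence of the monoidal with the enriched structures, all of which are known or routine for convergence vector spaces. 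I expect (ii)--(iii) to be the principal obstacle: one must pin down the correct convergence tensor product on $\sL$ so that $\sL$ is genuinely monoidal closed, and then prove strong monoidality of the free-vector-space functor, which reduces to a universal-property computation showing that continuous bilinear maps out of $FX \otimes FY$ correspond to continuous maps $X \times Y \to (\cdot)$ and hence to morphisms out of $F(X \times Y)$.

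Granting the hypotheses, the abstract theorem yields that $\DD$ is a commutative monad, and it remains only to unwind this into the Fubini statement. Commutativity of $\DD$ asserts that the two natural maps $\DD X \times \DD Y \to \DD(X \times Y)$ obtained by composing the left and right strengths of $\DD$ in the two possible orders coincide; evaluating at $(\mu,\nu) \in \DD X \times \DD Y$, one order produces the functional $f \mapsto \nu(\lambda y.\,\mu(\lambda x.\,f(x,y)))$ on $[X \times Y, R]$ and the other produces $f \mapsto \mu(\lambda x.\,\nu(\lambda y.\,f(x,y)))$, so that defining $\mu \otimes \nu$ to be this common value establishes \eqref{eqn:lambda_fubini} and hence \eqref{eqn:fubini}. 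That $\mu \otimes \nu$ is a continuous linear functional is then immediate: it is continuous as a composite of morphisms of $\Conv$ (the strengths, exponential transposition, $\mu$, and $\nu$), and it is linear because the strength maps are morphisms of $\sL$. The iterated integrands $\lambda x.\,\nu(\lambda y.\,f(x,y))$ and $\lambda y.\,\mu(\lambda x.\,f(x,y))$ are continuous functions, being exponential transposes of morphisms in the cartesian closed category $\Conv$, which gives the final sentence of \bref{prop:fubini}; and $\mu \otimes \nu$ is unique because \eqref{eqn:lambda_fubini} determines its value at every $f \in [X \times Y, R]$. Finally, specializing $X$ and $Y$ to compact Hausdorff spaces and invoking the Riesz representation theorem to pass between continuous linear functionals on $[X,R]$ and $R$-valued Radon measures recovers Bourbaki's theorem \bref{prop:fubini} in its original measure-theoretic form.
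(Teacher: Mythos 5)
Your overall route is the same as the paper's: establish the symmetric monoidal adjunction $F \dashv G : \sL \to \Conv$ (the paper does this in \bref{thm:conv_smadj}, citing Seal for the monoidal closed structure on convergence vector spaces and the strong symmetric monoidality of $F$), invoke the abstract commutativity theorem for the natural distribution monad, and unwind commutativity of $\DD$ into the equality of the two iterated integrals (the paper's \bref{thm:comm_is_fubini_for_conv}, which computes $t'$, $t''$, $\otimes_{XY}$ and $\widetilde{\otimes}_{XY}$ explicitly and matches them to \eqref{eqn:lambda_fubini}). Your unwinding step, including the automatic continuity and linearity of $\mu \otimes \nu$, the continuity of the integrands, and the uniqueness, is correct and matches the paper.

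The gap is in your verification of the hypotheses of the abstract theorem. Theorem \bref{thm:general_fubini} requires, beyond the symmetric monoidal adjunction itself, that $\sL$ be locally small and finitely well-complete and --- crucially --- that every cotensor $[X,R]$ in $G_*\aeL$ be \emph{reflexive}, i.e.\ that the canonical morphism $[X,R] \to [X,R]^{**}$ into the double continuous dual be an isomorphism of convergence vector spaces. You never name this last condition; your item (iv) sweeps the ``remaining standing hypotheses'' into a list of routine bookkeeping, and you identify the monoidal closed structure on $\sL$ as the principal obstacle. In fact the reflexivity of $[X,R] \cong \aeX(X,R)$ is the essential analytic input to the entire argument: it is a nontrivial theorem of Butzmann \cite{Bu} (recorded in \bref{exa:refl_conv_vect}), it is exactly what makes the completion $\tH FX$ coincide with the double dual $(FX)^{**} \cong DX$ in the proof of \bref{thm:dist_via_compl}, and its unavailability in other settings is why the commutativity of $\DD$ had previously been asserted to fail in general. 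Without verifying it, the abstract theorem simply does not apply. The finite well-completeness and local smallness of $\sL$ also need a word (the paper disposes of them in \bref{rem:base_fwc} and \bref{thm:nat_dist_on_conv_comm}, using that $\Conv$ is topological over $\Set$ and that $G$ creates limits), but those are genuinely routine; the reflexivity is not.
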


This theorem is obtained as a corollary to an abstract result concerning an arbitrary symmetric monoidal adjunction 
\begin{equation}\label{eqn:smc_adj}
\xymatrix {
\X \ar@/_0.5pc/[rr]_F^(0.4){}^(0.6){}^{\top} & & \sL \ar@/_0.5pc/[ll]_G
}
\end{equation}
where $\X$ and $\sL$ are symmetric monoidal closed categories.  We let $R$ denote the unit object of $\sL$.  In our key example, where $\X = \Conv$, we take $\sL = \RVect(\X)$ to be the category of $R$-vector-space objects in $\X$ (or \textit{convergence vector spaces}) and let $G$ be the forgetful functor.  Quite generally, the symmetric monoidal adjunction \eqref{eqn:smc_adj} automatically acquires the structure of an $\X$-enriched adjunction, with $\sL$ a \textit{cotensored} $\X$-category.  In our example, the function spaces $[X,R]$ serve as cotensors of $R$ in this $\X$-enriched category.  In the general setting, the cotensors $[X,R]$ in $\sL$ give rise to an $\X$-enriched monad $\DD$ on $\X$ whose underlying $\X$-enriched functor $D:\X \rightarrow \X$ is given by $DX = \sL([X,R],R)$.  In our example, $DX$ is the canonical space of continuous linear functionals $\mu:[X,R] \rightarrow R$.  Referring to the elements of $DX$ as \textit{natural distributions}, we call $\DD$ the \textit{natural distribution monad}.

Kock has considered the monad $\DD$, in a slightly different setting, under the name of \textit{the Schwartz double-dualization monad} \cite{Kock:ProResSynthFuncAn,Kock:Dist}.  Kock \cite{Kock:Dist} has claimed that for an \textit{arbitrary} $\X$-monad $\TT$ on a cartesian closed category $\X$, the statement that $\TT$ is \textit{commutative} \cite{Kock:Comm} is a form of Fubini's Theorem.  In the earlier paper \cite{Kock:ProResSynthFuncAn}, Kock had made this connection more explicit in the case of the Schwartz double-dualization monad on a ringed topos, with reference to the terms appearing in the Fubini equations \eqref{eqn:fubini}, \eqref{eqn:lambda_fubini}.  We thus reduce our task of proving our Fubini theorem for convergence spaces \pbref{thm:fubini_for_conv} to the problem of proving that the monad $\DD$ on $\Conv$ is commutative.

However, inherent in Kock's investigations was the general conclusion (for general $\X$) that $\DD$ ``is not commutative'' (\cite{Kock:Dist}, pg. 97), whereas we show that the monad $\DD$ on $\X = \Conv$ \textit{is} commutative, by means of the following result in our general setting:

\begin{ThmSub}\label{thm:general}
Suppose given a symmetric monoidal adjunction as in \eqref{eqn:smc_adj}, with $\sL$ locally small and finitely well-complete.  Suppose also that each cotensor $[X,R]$ in the $\X$-category $\sL$ is reflexive.  Then $\DD$ is commutative.
\end{ThmSub}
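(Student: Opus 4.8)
The plan is to realize the natural distribution monad $\DD$ as a transport of the \emph{double-dualization} (continuation) monad on $\sL$ along the symmetric monoidal adjunction \eqref{eqn:smc_adj}, and then to use the reflexivity hypothesis to show that the two canonical maps $DA \otimes DB \rightrightarrows D(A\otimes B)$ whose equality is commutativity both coincide with a single canonical morphism built from the double-dualization unit; since that unit is a \emph{natural} transformation, independent of any ordering of the tensor factors, this forces the two maps to agree.

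Write $\underline{\sL}(L,M) \in \sL$ for the internal hom of the symmetric monoidal closed category $\sL$ and $L^{\ast} := \underline{\sL}(L,R)$ for the dual of an object $L$, so that $\mathbf{T}L := L^{\ast\ast}$ is the continuation $\sL$-monad on $\sL$ associated with the unit object $R$, with unit $\delta : \mathrm{Id} \Rightarrow \mathbf{T}$; recall also $\sL(L,M) = G\,\underline{\sL}(L,M)$. First I would record the canonical isomorphism $\underline{\sL}(FX, R) \cong [X, R]$ (both objects represent $\sL(L,-) \mapsto \X(X, \sL(L,R))$), from which
\[ DX \;=\; \sL([X,R],R) \;=\; G\,\underline{\sL}([X,R],R) \;\cong\; G\bigl((FX)^{\ast\ast}\bigr) \;=\; (G\mathbf{T}F)(X), \]
naturally in $X$; a routine check identifies the monad structure of $\DD$ with the one that $\mathbf{T}$ and the adjunction $F \dashv G$ induce on $G\mathbf{T}F$, together with its enriched strength. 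Because $F$ is strong monoidal and $G$ lax monoidal, unwinding the left and right strengths of $\DD$ shows that the two candidate maps $\phi_\ell, \phi_r : DA \otimes DB \rightrightarrows D(A\otimes B)$ are each of the form
\[ DA \otimes DB \xrightarrow{\ \gamma\ } G\bigl((FA)^{\ast\ast} \otimes (FB)^{\ast\ast}\bigr) \xrightarrow{\ G\psi\ } G\bigl((FA\otimes FB)^{\ast\ast}\bigr) \;=\; D(A\otimes B), \]
for one and the same comparison morphism $\gamma$ assembled from the unit of $F \dashv G$ and the lax structure of $G$, where $\psi = \psi_\ell$ resp.\ $\psi = \psi_r$ is the left resp.\ right double strength of $\mathbf{T}$ evaluated at $(FA, FB)$. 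Hence it suffices to prove $\psi_\ell = \psi_r : (FA)^{\ast\ast} \otimes (FB)^{\ast\ast} \rightrightarrows (FA\otimes FB)^{\ast\ast}$ for all $A, B$.

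Now reflexivity enters. Since each cotensor is reflexive, $[A,R] = (FA)^{\ast}$, $[B,R] = (FB)^{\ast}$ and $[A\otimes B,R] = (FA\otimes FB)^{\ast}$ are reflexive; chasing through the closed structure and the symmetry of $\otimes$ (using reflexivity of $(FA)^{\ast}$ and $(FB)^{\ast}$) yields a natural isomorphism $(FA\otimes FB)^{\ast} \cong \bigl((FA)^{\ast\ast}\otimes(FB)^{\ast\ast}\bigr)^{\ast}$, hence, dualizing, a natural isomorphism $\theta : \bigl((FA)^{\ast\ast}\otimes(FB)^{\ast\ast}\bigr)^{\ast\ast} \xrightarrow{\cong} (FA\otimes FB)^{\ast\ast}$; one also records that $(FA)^{\ast\ast}$, $(FB)^{\ast\ast}$ and $(FA\otimes FB)^{\ast\ast}$ are reflexive. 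I would then show that under $\theta$, \emph{each} of $\psi_\ell$ and $\psi_r$ equals $\theta \circ \delta_{(FA)^{\ast\ast}\otimes(FB)^{\ast\ast}}$, i.e.\ the canonical inclusion of $(FA)^{\ast\ast}\otimes(FB)^{\ast\ast}$ into its own double dual followed by $\theta$. Since $\delta$ is a single natural transformation and $\theta$ is natural and compatible with the symmetry of $\otimes$, this common value does not distinguish the two factors, so $\psi_\ell = \psi_r$; therefore $\phi_\ell = \phi_r$ and $\DD$ is commutative. On global elements this is precisely the symmetry of the two ends of \eqref{eqn:lambda_fubini}: both iterated-integral functionals attached to $\mu \in DA$ and $\nu \in DB$ equal the functional obtained by transporting, along $G$, $\delta$ and $\theta$, the single morphism $R \to (FA)^{\ast\ast}\otimes(FB)^{\ast\ast}$ that $\mu$ and $\nu$ jointly determine.

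The standing hypotheses that $\sL$ be locally small and finitely well-complete are what make all of this legitimate: they guarantee that the cotensors $[X,R]$, the hom functor $\sL(-,-)$ and the monad $\DD$ are well-defined, and that $\sL$ possesses the finite limits and subobject-intersections needed to manipulate the reflexivity hypothesis — in particular to pass freely between an object, its image inside its double dual, and its double dual. The main obstacle is the verification in the preceding paragraph that \emph{both} double strengths of the continuation monad $\mathbf{T}$ at the nice objects $(FA, FB)$ collapse onto the canonical double-dualization unit under $\theta$: this is a coherence-heavy diagram chase through the definition of the enriched monad strength, the two triangle identities, and the chain of isomorphisms furnished by reflexivity, and it is exactly the point at which reflexivity — rather than mere closedness — is indispensable, so that Kock's general obstruction to commutativity of $\DD$ is circumvented precisely here.
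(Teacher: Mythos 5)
Your strategy is genuinely different from the paper's: you propose to compare the two double strengths $\psi_\ell,\psi_r$ of the double-dualization monad $\HH$ on $\sL$ directly at the objects $FA$, $FB$, whereas the paper never compares strengths at all --- it factorizes the Eilenberg--Moore adjunction of $\HH$ through a symmetric monoidal closed reflection (Day's adjoint-functor factorization), obtains a commutative monad $\tDD$ for free from Kock's correspondence between symmetric monoidal monads and commutative enriched monads, and uses reflexivity only to show that the canonical comparison $\tDD \rightarrow \DD$ is invertible. Your route, if completed, would be more elementary; note in passing that finite well-completeness is needed in the paper precisely to run Day's factorization, not (as your last paragraph suggests) to make the cotensors or $\DD$ well defined, so a completed direct argument would presumably dispense with that hypothesis.

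The genuine gap is at the step you flag as ``the main obstacle,'' and the justification you sketch for it is circular as stated. Any $\theta$ built ``through the closed structure'' is assembled from a chain of adjunction isomorphisms that necessarily treats the two tensor factors in some order; the two possible orders yield two isomorphisms $\bigl((FA)^{**}\otimes(FB)^{**}\bigr)^{**}\rightarrow(FA\otimes FB)^{**}$, and a direct computation shows that one of them satisfies $\theta\circ\partial_{(FA)^{**}\otimes(FB)^{**}}=\psi_\ell$ while the other satisfies the same identity with $\psi_r$; their difference is exactly the Fubini discrepancy you are trying to kill, so appealing to ``naturality and compatibility with the symmetry'' of $\theta$ assumes the conclusion. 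To close the gap you must exhibit a \emph{manifestly symmetric} $\theta$, for instance by proving from reflexivity of $(FA)^*$ and $(FB)^*$ that $(\partial_{FA}\otimes\partial_{FB})^{*}$ (equivalently $(\partial_{FA}\otimes\partial_{FB})^{**}$) is invertible and taking $\theta$ to be its inverse; one then still has to verify $\HH\partial_{FA}=\partial_{\bH FA}$ at these objects and internalize the whole element-wise computation in $\sL$. A second, smaller gap: the reduction of commutativity of the $\X$-monad $\DD$ (with its \emph{given} enrichment over $\btimes$) to the equality $\psi_\ell=\psi_r$ in $\sL$ is itself a substantive identification of enriched structures --- it is the point the paper spends Sections \bref{sec:enr_mon_func}--\bref{sec:comm_mnd_smc_adj} and Proposition \bref{prop:dist_via_dd} establishing --- and cannot be dismissed as ``a routine check.''
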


Here, the notion of \textit{finite well-completeness} is a mild requirement of the existence of certain (inverse) limits in $\sL$.  The condition of \textit{reflexivity} of an object $E$ of $\sL$ is the natural one afforded by the symmetric monoidal closed structure on $\sL$, namely that the canonical morphism $E \rightarrow E^{**}$ into the \textit{double dual} $E^{**} = \underline{\mathscr{L}}(\underline{\mathscr{L}}(E,R),R)$ is an isomorphism, where $\underline{\mathscr{L}}(-,-)$ here denotes the `internal-hom' functor of $\sL$.   In our example, $E^* = \underline{\mathscr{L}}(E,R)$ is called the \textit{continuous dual} of the convergence vector space $E$, and indeed it was shown by Butzmann \cite{Bu} that the convergence vector spaces $[X,R]$ are reflexive in the given sense, so that the needed commutativity of $\DD$ in this context is obtained.

The proof of Theorem \bref{thm:general} is quite technical and yet relatively simple in its overall form, and so we now give an informal sketch.  The first key observation is that since $[X,R] \cong \underline{\mathscr{L}}(FX,R) = (FX)^*$, we have that $[X,R]^* \cong (FX)^{**}$.  In our example, this means that the convergence vector space of natural distributions $[X,R]^*$ is isomorphic to the double dual of the free convergence vector space $FX$ on $X$.  In the general setting, the double dualization endofunctor $H := (-)^{**}$ on $\sL$ underlies an $\sL$-enriched monad $\HH$ on $\sL$, and we find that, up to isomorphism, $\DD$ is induced by the composite $\X$-enriched adjunction
\begin{equation}\label{eq:fg_em_composite_adj}
\xymatrix {
\X \ar@/_0.5pc/[rr]_F^(0.4){}^(0.6){}^{\top} & & \sL \ar@/_0.5pc/[ll]_G \ar@/_0.5pc/[rr]_{F^{\HH}}^(0.4){}^(0.6){}^{\top} & & {\sL^{\HH}\;,} \ar@/_0.5pc/[ll]_{G^{\HH}}
}
\end{equation}
in which the rightmost adjunction is the Eilenberg-Moore adjunction for the double-dualization monad $\HH$.

Kock \cite{Kock:SmComm} showed that, up to a bijection, commutative $\X$-enriched monads are the same as symmetric monoidal monads.  But the leftmost adjunction $F \dashv G$ in \eqref{eq:fg_em_composite_adj} is symmetric monoidal, so our strategy is to \textit{replace} the rightmost adjunction by one that is symmetric monoidal --- without affecting the $\X$-enriched monad $\DD$ induced by the composite.

To this end, we apply the work of Day \cite{Day:AdjFactn} in order to factorize the $\sL$-enriched Eilenberg-Moore adjunction $F^{\HH} \dashv G^{\HH}$ as a composite consisting of an $\sL$-enriched reflection 
\begin{equation}\label{eqn:completion_refl}
\xymatrix {
\sL \ar@/_0.5pc/[rr]^(0.4){}^(0.6){}^{\top} & & {\tL} \ar@{_{(}->}@/_0.5pc/[ll]
}
\end{equation}
followed by a conservative left adjoint.  We call the objects of the reflective subcategory $\tL$ the \textit{(functionally) complete} objects of $\sL$, and we call the induced idempotent monad $\tHH$ on $\sL$ the \textit{(functional) completion monad}.  Since $\tL$ is an $\sL$-enriched reflective subcategory of $\sL$, $\tL$ is closed under cotensors in $\sL$, and so it follows from Day's work on closed reflections \cite{Day:Refl} that $\tL$ is symmetric monoidal closed and that the adjunction \eqref{eqn:completion_refl} is symmetric monoidal.  Replacing the rightmost adjunction in the composite \eqref{eq:fg_em_composite_adj} by this reflection, we thus obtain a composite $\X$-enriched adjunction
\begin{equation}\label{eq:fg_compl_composite_adj}
\xymatrix {
\X \ar@/_0.5pc/[rr]_F^(0.4){}^(0.6){}^{\top} & & \sL \ar@/_0.5pc/[ll]_G \ar@/_0.5pc/[rr]^(0.4){}^(0.6){}^{\top} & & {\tL} \ar@{_{(}->}@/_0.5pc/[ll]
}
\end{equation}
whose factors' underlying ordinary adjunctions are symmetric monoidal.  

Now the key step is to show that the $\X$-monad induced by this composite \eqref{eq:fg_compl_composite_adj} is isomorphic to $\DD$.  At the level of objects, this amounts to the statement that the space of natural distributions $[X,R]^* \cong (FX)^*$ is isomorphic to the \textit{completion} $\tH FX$ of the `free span' $FX$ of $X$.  It is at this stage that we make use of the hypothesis that the cotensors $[X,R]$ are reflexive.

Finally, we would like to reason that $\DD$ is induced (up to isomorphism) by the composite symmetric monoidal adjunction \eqref{eq:fg_compl_composite_adj} and so is a symmetric monoidal monad and therefore a commutative monad.  In the interest of rigour, however, we must (in effect) verify that the resulting commutative $\X$-enriched \textit{structure} on (the ordinary monad underlying) $\DD$ coincides with the given $\X$-enriched structure carried by $\DD$.

This paper is organized into two parts:  Whereas \small{P\textsc{art} II provides complete proofs of Theorems \bref{thm:fubini_for_conv} and \bref{thm:general} along the above lines, \small{P\textsc{art} I} develops several topics in enriched and monoidal category theory that are needed for \small{P\textsc{art} II} but are of general applicability.  In particular, in \bref{sec:enr_mon_func} and \bref{sec:enr_smc_adj} we study the enriched structure canonically associated to an arbitrary symmetric monoidal adjunction of closed categories \eqref{eqn:smc_adj}:  its relation to change-of-base for enriched categories, its properties with regard to composition of adjunctions, and, in \bref{sec:comm_mnd_smc_adj}, its relation to Kock's bijection \cite{Kock:SmComm} between symmetric monoidal monads and commutative enriched monads.  In \bref{sec:smc_refl} we recall a result of Day \cite{Day:Refl} on symmetric monoidal closed reflections and study its relation to the canonical enriched structure of \bref{sec:enr_smc_adj}.  In \bref{sec:enr_orth_fwc} we study enriched notions of \textit{orthogonality}, of \textit{factorization system}, and of \textit{finite well-completeness} in the sense of \cite{CHK}, showing that enriched finite well-completeness reduces to ordinary in the case of the base category $\V$.  This is followed in \bref{sec:enr_adj_factn} by a treatment of enriched adjoint functor factorization that builds upon the work of Day \cite{Day:AdjFactn}.  Whereas Day's explicit aim in \cite{Day:AdjFactn} was to provide an approach for proving an enriched analogue of a factorization result of Applegate and Tierney, such a result is neither precisely stated nor proved there. Rather, Day proves key lemmas that allow a proof of such a result.  We fill this gap by giving a statement and proof of the resulting adjoint factorization theorem \pbref{thm:adj_factn}.

\newpage
\vspace{1pc}
\begin{center}\normalsize{{P\textsc{art} I:  S\textsc{upporting} T\textsc{heory}}}\end{center}

\section{Notation and 2-categorical preliminaries}

Ordinary categories and functors, as well as monoidal such, are denoted by script letters and uppercase letters, respectively (e.g. $\A$, $F$), whereas $\V$-categories and $\V$-functors (for a monoidal category $\V$) are denoted by bold letters (e.g. $\eA$, $\bF$, respectively), with the underlying ordinary category or functor denoted by the corresponding non-bold letter.

Given a symmetric monoidal closed category $\V$, we denote by $\aeV$ the canonically associated $\V$-category whose underlying ordinary category is $\V$; in particular, the internal homs in $\V$ will therefore be denoted by $\aeV(V_1,V_2)$, whereas we reserve the square-bracket notation $[-,-]$ of \bref{sec:intro} for cotensors in a general $\V$-category.  The canonical `evaluation' morphisms $V_1 \otimes \aeV(V_1,V_2) \rightarrow V_2$ are denoted by $\Ev_{V_1 V_2}$, or simply $\Ev$.  We sometimes omit subscripts and names of morphisms when they are clear from the context.

We shall require the following basic results in the 2-categorical context:

\begin{PropSub} \label{thm:mon_func_detd_by_adj}
Let $f \nsststile{\epsilon}{\eta} g : B \rightarrow A$ be an adjunction in a 2-category $\K$.  Then there is an associated monoidal functor $[f,g] := \K(f,g) : \K(B,B) \rightarrow \K(A,A)$ with the following property:  For any adjunction $f' \nsststile{\epsilon'}{\eta'} g':C \rightarrow B$ with induced monad $\TT$ on $B$, the monad $[f,g](\TT)$ on $A$ is equal to the monad induced by the composite adjunction 
$$\xymatrix{A \ar@/_0.5pc/[rr]_f^(0.4){\eta}^(0.6){\epsilon}^{\top} & & B \ar@/_0.5pc/[ll]_g \ar@/_0.5pc/[rr]_{f'}^(0.4){\eta'}^(0.6){\epsilon'}^{\top} & & {C\;.} \ar@/_0.5pc/[ll]_{g'}}$$
\end{PropSub}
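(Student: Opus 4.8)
The plan is to take $\K(f,g)$ to be the \emph{conjugation} functor sending an endo-$1$-cell $t\colon B\to B$ to the composite $gtf\colon A\to A$ and an endo-$2$-cell $\alpha$ to $1_g\ast\alpha\ast1_f$; functoriality is immediate from the interchange law. Both hom-categories $\K(B,B)$ and $\K(A,A)$ carry the strict monoidal structure given by composition of endo-$1$-cells, for which the monoids are precisely the monads, and I will equip $\K(f,g)$ with the (lax) monoidal structure whose unit constraint is $\iota:=\eta\colon 1_A\Rightarrow gf=\K(f,g)(1_B)$ and whose multiplication constraint, for endo-$1$-cells $s,t$ of $B$, is the whiskered counit
$$\mu_{s,t}\;:=\;1_{gs}\ast\epsilon\ast1_{tf}\;\colon\;\K(f,g)(s)\circ\K(f,g)(t)\;=\;gsf\circ gtf\;\Longrightarrow\;gstf\;=\;\K(f,g)(s\circ t).$$

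Next I would check the three monoidal-functor axioms. As both monoidal structures are strict, the associativity axiom reduces to the equation $\mu_{r,\,s\circ t}\circ(1_{\K(f,g)(r)}\ast\mu_{s,t})=\mu_{r\circ s,\,t}\circ(\mu_{r,s}\ast1_{\K(f,g)(t)})$; here both composites amount to inserting a copy of $\epsilon$ into each of the two ``$fg$-junctions'' of the $1$-cell $grf\circ gsf\circ gtf$, and they agree by the interchange law, the two insertions occurring at independent positions so that their order is immaterial. The left and right unit axioms reduce, after whiskering, to the two triangle identities for $f\dashv g$; for example the left unit axiom is the equation $(1_g\ast\epsilon\ast1_{tf})\circ(\eta\ast1_{gtf})=1_{gtf}$, whose left-hand side factors as $(g\epsilon\circ\eta g)\ast1_{tf}=1_g\ast1_{tf}$. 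This establishes that $\K(f,g)$ is a monoidal functor.

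For the stated property, let $f'\nsststile{\epsilon'}{\eta'}g'\colon C\to B$ be a second adjunction, inducing the monad $\TT=(g'f',\,g'\epsilon'f',\,\eta')$ on $B$, a monoid in $\K(B,B)$. Since $\K(f,g)$ is monoidal it carries $\TT$ to a monoid in $\K(A,A)$ --- that is, a monad on $A$ --- with underlying $1$-cell $\K(f,g)(g'f')=gg'f'f$, unit $\K(f,g)(\eta')\circ\iota=(g\eta'f)\circ\eta$, and multiplication $\K(f,g)(g'\epsilon'f')\circ\mu_{g'f',\,g'f'}$, which expands to $(gg'\epsilon'f'f)\circ(1_{gg'f'}\ast\epsilon\ast1_{g'f'f})$. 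On the other hand, the composite adjunction $f'f\dashv gg'$ has unit $(g\eta'f)\circ\eta$ and counit $\epsilon'\circ(f'\epsilon g')$, so the monad it induces on $A$ has underlying $1$-cell $gg'f'f$, the same unit, and multiplication $(gg')\bigl(\epsilon'\circ(f'\epsilon g')\bigr)(f'f)$; expanding this by the interchange law gives precisely $(gg'\epsilon'f'f)\circ(1_{gg'f'}\ast\epsilon\ast1_{g'f'f})$. Hence the two monads coincide.

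I do not anticipate any genuine obstacle: the whole argument is a bookkeeping exercise with whiskering, the interchange law, and the triangle identities. The only steps requiring care are matching the whiskered composites correctly in the associativity axiom and expanding the composite-adjunction counit before comparing multiplications --- both transparent once one draws the corresponding string diagrams.
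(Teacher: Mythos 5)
Your proposal is correct and takes essentially the same route as the paper: the paper equips $[f,g]=\K(f,g)$ with exactly the monoidal structure you describe (unit constraint $\eta:1_A\to gf$ and multiplication constraints $gs\epsilon tf:gsfgtf\to gstf$) and then declares the verification straightforward. You simply carry out in detail the checks (associativity via interchange, unit axioms via the triangle identities, and the comparison with the composite-adjunction monad) that the paper omits.
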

\begin{proof}
The monoidal structure on the functor $[f,g]$ consists of the morphisms $gh\epsilon hf:ghfgkf \rightarrow ghkf$ in $\K(A,A)$ (for all objects $h$, $k$ in $\K(B,B)$) and the morphism $\eta:1_A \rightarrow gf$ in $\K(A,A)$.  The verification is straightforward.
\end{proof}

\begin{ParSub} \label{par:adj_and_mnd}
Given objects $A$, $B$ in a 2-category $\K$, there is a category $\Adj_\K(A,B)$ whose objects are adjunctions $f \nsststile{\epsilon}{\eta} g : B \rightarrow A$ in $\K$ and whose morphisms $(\phi,\psi):(f \nsststile{\epsilon}{\eta} g) \rightarrow (f' \nsststile{\epsilon'}{\eta'} g')$ consist of 2-cells $\phi:f \rightarrow f'$ and $\psi:g \rightarrow g'$ such that $(\psi \circ \phi) \cdot \eta = \eta'$ and $\epsilon' \cdot (\phi \circ \psi) = \epsilon$.

There is a category $\Mnd_\K(A)$ whose objects are monads on $A$ and whose morphisms $\theta:(t,\eta,\mu) \rightarrow (t',\eta',\mu')$ consist of a 2-cell $\theta:t \rightarrow t'$ such that $\theta \cdot \eta = \eta'$ and $\mu' \cdot (\theta \circ \theta) = \theta \cdot \mu$.  The identity monad $\ONEONE_A$ is an initial object in $\Mnd_\K(A)$, since for each monad $\TT = (t,\eta,\mu)$ on $A$, the 2-cell $\eta$ is the unique monad morphism $\eta:\ONEONE_A \rightarrow \TT$.

There is a functor $\Adj_{\K}(A,B) \rightarrow \Mnd_\K(A)$ sending an adjunction to its induced monad and a morphism $(\phi,\psi):(f \nsststile{\epsilon}{\eta} g) \rightarrow (f' \nsststile{\epsilon'}{\eta'} g')$ to the morphism $\psi \circ \phi:\TT \rightarrow \TT'$ between the induced monads.  Hence, in particular, isomorphic adjunctions induce isomorphic monads.
\end{ParSub}

\begin{PropSub} \label{thm:uniq_adj}
Let $f \nsststile{\epsilon}{\eta} g$ and $f' \nsststile{\epsilon'}{\eta'} g$ be adjunctions, having the same right adjoint $g:B \rightarrow A$, in a 2-category $\K$.  Then these adjunctions are isomorphic and hence induce isomorphic monads on $A$.
\end{PropSub}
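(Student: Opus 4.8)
The plan is to exhibit an explicit isomorphism between the two given adjunctions inside the category $\Adj_\K(A,B)$ of \bref{par:adj_and_mnd}, and then to deduce the statement about induced monads for free by applying the functor $\Adj_\K(A,B) \to \Mnd_\K(A)$ recorded there. The only reasonable candidates for the $f$-component are the usual comparison mates, so I would set
$$\phi := (\epsilon \circ f') \cdot (f \circ \eta') : f \to f', \qquad \psi := (\epsilon' \circ f) \cdot (f' \circ \eta) : f' \to f$$
(with $\circ$ denoting whiskering and horizontal composition, $\cdot$ vertical composition), and take $1_g$ as the $g$-component in both directions.

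First I would isolate the two \emph{representable adjunction bijections} furnished by $f \nsststile{\epsilon}{\eta} g$: for every $1$-cell $h : A \to B$, the assignment $\alpha \mapsto (g \circ \alpha) \cdot \eta$ is a bijection $\K(f,h) \xrightarrow{\sim} \K(1_A, gh)$ with inverse $\beta \mapsto (\epsilon \circ h) \cdot (f \circ \beta)$; and for every $1$-cell $k : B \to A$, the assignment $\delta \mapsto (g \circ \delta) \cdot (\eta \circ k)$ is a bijection $\K(fk, 1_B) \xrightarrow{\sim} \K(k,g)$. Each of these is a direct consequence of the two triangle identities together with the interchange law, and the analogous bijections hold for $f' \nsststile{\epsilon'}{\eta'} g$. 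Unwinding the definitions, $\phi$ is exactly the image of $\eta'$ under the inverse of the first bijection at $h = f'$, so $(g \circ \phi) \cdot \eta = \eta'$; symmetrically $(g \circ \psi) \cdot \eta' = \eta$. Moreover the second bijection at $k = g$ carries $\epsilon$ to $1_g$ (this is just a triangle identity).

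With these identities in hand the remaining verifications are short. To see $\phi$ and $\psi$ are mutually inverse, apply the (injective) bijection $\K(f,f) \xrightarrow{\sim} \K(1_A, gf)$ to $\psi \cdot \phi$: its image is $(g \circ \psi) \cdot (g \circ \phi) \cdot \eta = (g \circ \psi) \cdot \eta' = \eta$, which is also the image of $1_f$; hence $\psi \cdot \phi = 1_f$, and symmetrically $\phi \cdot \psi = 1_{f'}$. To see that $(\phi, 1_g)$ is a morphism in $\Adj_\K(A,B)$, note that the first axiom $(1_g \circ \phi) \cdot \eta = \eta'$ is already established, while for the second axiom $\epsilon' \cdot (\phi \circ 1_g) = \epsilon$ I would apply the bijection $\K(fg,1_B) \xrightarrow{\sim} \K(g,g)$: it sends the left-hand side to $(g \circ \epsilon') \cdot \big(((g \circ \phi) \cdot \eta) \circ g\big) = (g \circ \epsilon') \cdot (\eta' \circ g) = 1_g$ by the triangle identity for $f' \dashv g$, and it sends $\epsilon$ to $1_g$ as noted, so the two coincide. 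Thus $(\phi, 1_g)$ is a morphism of adjunctions both of whose components are invertible, hence an isomorphism in $\Adj_\K(A,B)$ (its inverse $(\psi, 1_g)$ is again a morphism, by symmetry or because the inverse of an invertible morphism of adjunctions is again one); applying the functor $\Adj_\K(A,B) \to \Mnd_\K(A)$ of \bref{par:adj_and_mnd} then produces the asserted isomorphism of induced monads.

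As this is a $2$-categorical rendering of the uniqueness of adjoints, I do not anticipate a conceptual obstacle; the one point requiring genuine care is the bookkeeping of whiskering sides and the repeated appeals to interchange needed to establish the representable bijections and to carry out the triangle-identity simplifications — which is precisely what the packaging above is designed to localize.
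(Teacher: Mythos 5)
Your proposal is correct and follows essentially the same route as the paper: it takes the same comparison $2$-cell $\phi = \epsilon f' \cdot f\eta'$ with the same inverse $\epsilon' f \cdot f'\eta$, checks that $(\phi,1_g)$ is an isomorphism in $\Adj_\K(A,B)$, and then applies the functor to $\Mnd_\K(A)$. The only difference is that you carry out in full the verifications the paper delegates to a citation of Gray and to ``one checks.''
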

\begin{proof}
The 2-cell $\phi := \epsilon f' \cdot f \eta':f \rightarrow f'$ has inverse $\epsilon' f \cdot f' \eta$ (\cite{Gray}), and one checks that $(\phi,1_g)$ serves as the needed isomorphism of adjunctions.
\end{proof}

\begin{PropSub} \label{thm:mnd_morph_from_adj_factn}
Let $\xymatrix{A \ar@/_0.5pc/[rr]_{f''}^(0.4){\eta''}^(0.6){\epsilon''}^{\top} & & C \ar@/_0.5pc/[ll]_{g''}}$ be an adjunction, with induced monad $\TT''$, in a 2-category $\K$.  Let 
$\xymatrix{A \ar@/_0.5pc/[rr]_f^(0.4){\eta}^(0.6){\epsilon}^{\top} & & B \ar@/_0.5pc/[ll]_g \ar@/_0.5pc/[rr]_{f'}^(0.4){\eta'}^(0.6){\epsilon'}^{\top} & & C \ar@/_0.5pc/[ll]_{g'}}$
be adjunctions with $gg' = g''$, and let $\TT$ and $\TT'$ be the respective induced monads.  Then there is an associated monad morphism $i:\TT \rightarrow \TT''$.
\end{PropSub}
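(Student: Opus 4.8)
The plan is to realize $i$ by pushing forward, along the monoidal functor $[f,g] = \K(f,g):\K(B,B)\to\K(A,A)$ of \bref{thm:mon_func_detd_by_adj}, the canonical monad morphism emanating from the identity monad on $B$. Recall the standard fact that a (lax) monoidal functor between monoidal categories carries monoids to monoids and monoid morphisms to monoid morphisms; applied to $[f,g]$, with $\K(B,B)$ and $\K(A,A)$ given their composition monoidal structures (units $\ONEONE_B$ and $\ONEONE_A = 1_A$), this yields an induced functor $[f,g]_* : \Mnd_\K(B) \to \Mnd_\K(A)$ on the monad categories of \bref{par:adj_and_mnd}. First I would pin down two values of this functor via \bref{thm:mon_func_detd_by_adj}: taking the inner adjunction there to be $1_B \dashv 1_B$ (whose induced monad is $\ONEONE_B$) gives $[f,g]_*(\ONEONE_B) = \TT$, while taking it to be $f' \dashv g'$ gives that $[f,g]_*(\TT')$ is exactly the monad induced by the composite adjunction $f'f \dashv gg'$.

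Next, since $gg' = g''$, the composite adjunction $f'f \dashv gg'$ and the given adjunction $f'' \dashv g''$ are two adjunctions in $\Adj_\K(A,C)$ sharing the right adjoint $g''$, hence isomorphic by \bref{thm:uniq_adj}; applying the functor $\Adj_\K(A,C) \to \Mnd_\K(A)$ of \bref{par:adj_and_mnd} we obtain a canonical isomorphism of monads $[f,g]_*(\TT') \cong \TT''$. Now $\ONEONE_B$ is initial in $\Mnd_\K(B)$ (\bref{par:adj_and_mnd}), so there is a unique monad morphism $\ONEONE_B \to \TT'$, namely the adjunction unit $\eta'$; applying $[f,g]_*$ produces a monad morphism $\TT = [f,g]_*(\ONEONE_B) \to [f,g]_*(\TT')$ — concretely $g\eta'f : gf \to gg'f'f$ — and composing with the isomorphism $[f,g]_*(\TT') \cong \TT''$ yields the desired monad morphism $i : \TT \to \TT''$.

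Since the substantive work has already been carried out in \bref{thm:mon_func_detd_by_adj} and \bref{thm:uniq_adj}, what remains is essentially bookkeeping: one must check that the monoid and monoid-morphism conditions of the general ``monoidal functor preserves monoids'' fact match the explicit monad-morphism equations of \bref{par:adj_and_mnd}, and that the monoid structure the general construction places on $[f,g]_*(\ONEONE_B)$ is on the nose the monad $\TT$ — which is precisely the equality delivered by \bref{thm:mon_func_detd_by_adj}. I do not expect a genuine obstacle here; if anything, the only point requiring a moment's attention is the identification $[f,g]_*(\TT') \cong \TT''$, which is where the hypothesis $gg' = g''$ is used.
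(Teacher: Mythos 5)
Your proposal is correct and follows essentially the same route as the paper's own proof: both apply the monoidal functor $[f,g]$ of \bref{thm:mon_func_detd_by_adj} to the unique monad morphism $\eta':\ONEONE_B \rightarrow \TT'$ to obtain $g\eta'f:\TT \rightarrow [f,g](\TT')$, and then compose with the isomorphism $[f,g](\TT') \cong \TT''$ supplied by \bref{thm:uniq_adj}. The only difference is that you spell out the standard ``monoidal functors preserve monoids and monoid morphisms'' justification, which the paper leaves implicit.
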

\begin{proof}
Let $f_c \nsststile{\epsilon_c}{\eta_c} g''$ be the composite adjunction, and let $\TT_c$ be its induced monad.  By \bref{thm:mon_func_detd_by_adj}, we have that $\TT_c = [f,g](\TT')$, whereas $\TT = [f,g](\ONEONE_A)$.  By applying $[f,g]$ to the monad morphism $\eta':\ONEONE_A \rightarrow \TT'$, we obtain a monad morphism
$$g \eta' f = [f,g](\eta') : \TT = [f,g](\ONEONE_A) \rightarrow [f,g](\TT') = \TT_c\;.$$
Also, by \bref{thm:uniq_adj}, there is an isomorphism of monads $\xi:\TT_c \rightarrow \TT''$, and we obtain a composite morphism of monads
\begin{equation}\label{eqn:comp_morph_mnds}i := (\TT \xrightarrow{g \eta' f} \TT_c \xrightarrow{\xi} \TT'')\;.\end{equation}
\end{proof}

\section{Enriched functors arising from monoidal functors} \label{sec:enr_mon_func}

\begin{ParSub} \label{par:ch_base}
Cruttwell \cite{Cr} defines a 2-functor $(-)_*:\MMCCAATT \rightarrow \TWOCCAATT$, from the 2-category of monoidal categories to the 2-category of 2-categories, sending each monoidal functor $M:\V \rightarrow \W$ to the \textit{change-of-base} functor $M_*:\VCAT \rightarrow \WCAT$.  For a $\V$-category $\eA$, the $\W$-category $M_*\eA$ has objects those of $\eA$ and homs given by
$$(M_*\eA)(A_1,A_2) = M\eA(A_1,A_2)\;\;\;\;(A_1,A_2 \in \eA)\;.$$
Given a symmetric monoidal functor $M:\V \rightarrow \W$ between closed symmetric monoidal categories, we also obtain a canonical $\W$-functor $\grave{M}:M_*\aeV \rightarrow \aeW$, given on objects just as $M$, and with each
$$\grave{M}_{V_1 V_2} : (M_*\aeV)(V_1,V_2) = M\aeV(V_1,V_2) \rightarrow \aeW(MV_1,MV_2)\;\;\;\;(V_1,V_2 \in \V)$$
gotten as the transpose of the composite
$$MV_1 \otimes M\aeV(V_1,V_2) \rightarrow M(V_1 \otimes \aeV(V_1,V_2)) \xrightarrow{M(\Ev)} MV_2\;.$$
\end{ParSub}

\begin{ParSub} \label{par:cl_func}
Let $\SSMMCCAATT$ be the 2-category of symmetric monoidal categories, and let $\CCllSSMMCCAATT$ be the full sub-2-category of $\SSMMCCAATT$ of with objects all closed symmetric monoidal categories.  Letting $\CCllCCAATT$ be the 2-category of closed categories \cite{EiKe}, there is a 2-functor $c:\CCllSSMMCCAATT \rightarrow \CCllCCAATT$, sending a symmetric monoidal functor $M:\V \rightarrow \W$ (with $\V$ and $\W$ closed symmetric monoidal categories) to the closed functor $cM:\V \rightarrow \W$ with the same underlying ordinary functor $M$ and the same unit morphism $I_{\W} \rightarrow MI_{\V}$, but with each structure morphism
$$M\aeV(V_1,V_2) \rightarrow \aeW(MV_1,MV_2)\;\;\;\;(V_1,V_2 \in \V)$$
equal to the the morphism
$$\grave{M}_{V_1 V_2} : (M_*\aeV)(V_1,V_2) \rightarrow \aeW(MV_1,MV_2)$$
associated to the $\W$-functor $\grave{M}:M_*\aeV \rightarrow \aeW$ of \bref{par:ch_base}.
\end{ParSub}

\begin{PropSub} \label{prop:comp_assoc_enr_fun}
Let $M:\U \rightarrow \V$, $N:\V \rightarrow \W$ be symmetric monoidal functors between closed symmetric monoidal categories.  Then the $\W$-functor
$$\grave{(NM)}:(NM)_*\aeU \rightarrow \aeW$$
is equal to the composite
$$N_*M_*\aeU \xrightarrow{N_*\grave{M}} N_*\aeV \xrightarrow{\grave{N}} \aeW\;.$$
\end{PropSub}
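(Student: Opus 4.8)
The plan is to exploit that, by the 2-functoriality of Cruttwell's change-of-base $(-)_*$ \pbref{par:ch_base}, we have $(NM)_* = N_* M_*$, so that both $\grave{(NM)}$ and the composite $\grave{N} \circ N_*\grave{M}$ are $\W$-functors with common domain $(NM)_*\aeU = N_*M_*\aeU$ and common codomain $\aeW$. Each of them sends an object $U$ of $\U$ to $NMU$, since $\grave{M}$, $\grave{N}$, and $\grave{(NM)}$ all act on objects as the underlying ordinary functors $M$, $N$, $NM$, and change-of-base does not alter the object-set. Hence equality of the two $\W$-functors reduces to equality of their hom-morphisms, i.e.\ to the assertion that for all $U_1, U_2 \in \U$,
\[ \grave{(NM)}_{U_1 U_2} \;=\; \grave{N}_{MU_1, MU_2} \circ N\bigl(\grave{M}_{U_1 U_2}\bigr) \colon NM\aeU(U_1,U_2) \longrightarrow \aeW(NMU_1, NMU_2), \]
where the right-hand side is, by the definition of change-of-base, precisely the hom-morphism of $\grave{N} \circ N_*\grave{M}$ at $(U_1,U_2)$.

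To prove this identity I would compare the transposes of both sides under the tensor--hom adjunction of $\W$. By its definition \pbref{par:ch_base}, $\grave{(NM)}_{U_1 U_2}$ is the transpose of $NM(\Ev) \circ (NM)_2$, where $(NM)_2$ denotes the (lax) structure morphism of the composite symmetric monoidal functor $NM$; recall the standard identity $(NM)_2 = N(M_2) \circ N_2$ relating it to the structure morphisms $M_2$, $N_2$ of $M$ and $N$. For the right-hand side, I would compute $\Ev \circ \bigl(1_{NMU_1} \otimes (\grave{N}_{MU_1, MU_2} \circ N\grave{M}_{U_1 U_2})\bigr)$ in three steps: first apply the defining transpose property of $\grave{N}$ at the pair $(MU_1, MU_2)$ to rewrite $\Ev \circ (1 \otimes \grave{N}_{MU_1,MU_2})$ as $N(\Ev)\circ N_2$; next use naturality of the structure morphism $N_2$ in its second argument with respect to $\grave{M}_{U_1 U_2}$ to move $N\grave{M}_{U_1 U_2}$ inside $N$; finally apply the defining transpose property of $\grave{M}$ at the pair $(U_1,U_2)$ to rewrite $\Ev \circ (1 \otimes \grave{M}_{U_1 U_2})$ as $M(\Ev)\circ M_2$, apply $N$, and recognize $N(M_2)\circ N_2 = (NM)_2$. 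Chaining these equalities yields $NM(\Ev)\circ (NM)_2$, which is exactly the transpose of $\grave{(NM)}_{U_1 U_2}$; since transposition is a bijection, the two hom-morphisms agree, and the proposition follows.

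There is no genuine obstacle here: the argument is bookkeeping, and its only delicate points are the identification $(NM)_2 = N(M_2)\circ N_2$ of the monoidal structure of a composite with the composite of the monoidal structures, and keeping the change-of-base straight so that the hom-morphism of $N_*\grave{M}$ at $(U_1,U_2)$ is literally $N$ applied to that of $\grave{M}$. Alternatively one could avoid naming transposes and argue directly via the universal property of the internal homs $\aeW(NMU_1,NMU_2)$, but the computation is essentially the same; in either case the content of the statement is just that the assignment $M \mapsto \grave{M}$ is compatible with composition, which the above diagram chase makes transparent.
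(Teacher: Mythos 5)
Your proof is correct: the reduction to equality of hom-morphisms is sound, and the transpose computation — using the defining property of $\grave{N}$ at $(MU_1,MU_2)$, naturality of $N_2$ in its second variable, the defining property of $\grave{M}$ at $(U_1,U_2)$, and the identity $(NM)_2 = N(M_2)\circ N_2$ for the composite monoidal structure — does land on $NM(\Ev)\circ (NM)_2$, the transpose of $\grave{(NM)}_{U_1 U_2}$. The paper takes a different, shorter route: it observes (via \bref{par:cl_func}) that $\grave{M}_{U_1U_2}$ and $\grave{N}_{MU_1 MU_2}$ are precisely the structure morphisms of the associated \emph{closed} functors $cM$ and $cN$, so that the hom-morphism of the composite $\W$-functor is the structure morphism of the composite closed functor $(cN)(cM)$, and then invokes the 2-functoriality of $c$, i.e.\ $(cN)(cM)=c(NM)$, to identify this with the structure morphism of $c(NM)$, which is $\grave{(NM)}_{U_1U_2}$ by definition. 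The content is the same — your diagram chase is exactly what underlies the identity $(cN)(cM)=c(NM)$ in the Eilenberg--Kelly formalism — but the paper buys brevity by delegating the computation to that known 2-functor, whereas your version is self-contained and makes the bookkeeping (in particular the role of $(NM)_2 = N(M_2)\circ N_2$) explicit. Either is acceptable; if you adopt the paper's route you should make sure the cited fact $(cN)(cM)=c(NM)$ is actually available in your references, since that is where all the work is hidden.
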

\begin{proof}
Both $\W$-functors are given on objects just as $NM$.  Letting $C$ be the given composite $\W$-functor, the structure morphisms $C_{U_1 U_2}$ of $C$ (where $U_1,U_2$ are objects of $N_*M_*\aeU$, equivalently, of $\U$) are the composites
$$NM\aeU(U_1,U_2) \xrightarrow{N(\grave{M}_{U_1 U_2})} N\aeV(MU_1,MU_2) \xrightarrow{\grave{N}_{MU_1 MU_2}} \aeW(NMU_1,NMU_2)\;,$$
but by \bref{par:cl_func}, we have that $\grave{M}_{U_1 U_2}$ and $\grave{N}_{MU_1 MU_2}$ are equally the structure morphisms of the closed functors $cM$ and $cN$, respectively, and so $C_{U_1 U_2}$ is the structure morphism of the composite $(cN)(cM)$ of these closed functors.  Since $(cN)(cM) = c(NM)$, $C_{U_1 U_2}$ is therefore equally the structure morphism of the closed functor $c(NM)$ associated to $NM$, which by \bref{par:cl_func} is equal to
$$\grave{(NM)}_{U_1 U_2}:((NM)_*\aeU)(U_1,U_2) \rightarrow \aeW(NMU_1,NMU_2)\;,$$
the structure morphism of the $\aeW$ functor $\grave{(NM)}$.
\end{proof}

\section{Enrichment of a symmetric monoidal closed adjunction} \label{sec:enr_smc_adj}

\begin{ParSub} \label{par:given_smcadj}
Let
\begin{equation}\label{eqn:smcadj}
\xymatrix {
\X \ar@/_0.5pc/[rr]_F^(0.4){\eta}^(0.6){\epsilon}^{\top} & & \sL \ar@/_0.5pc/[ll]_G
}
\end{equation}
be a symmetric monoidal adjunction, where $\X$ and $\sL$ are \textit{closed} symmetric monoidal categories.  By applying $(-)_*$ \pbref{par:ch_base} to this monoidal adjunction, we obtain an adjunction
$$
\xymatrix {
\XCAT \ar@/_0.5pc/[rr]_{F_*}^(0.4){\eta_*}^(0.6){\epsilon_*}^{\top} & & \LCAT \ar@/_0.5pc/[ll]_{G_*}
}
$$
in $\TWOCCAATT$.  We have an $\X$-functor $\grave{G}:G_*\aeL \rightarrow \aeX$ and an $\sL$-functor $\grave{F}:F_*\aeX \rightarrow \aeL$ \pbref{par:ch_base}, and we obtain an $\X$-functor $\acute{F}:\aeX \rightarrow G_*\aeL$ as the the transpose of $\grave{F}$ under the preceding adjunction.
\end{ParSub}

The following well-known results have the status of `folklore'; see, e.g., \S 4, Prop. 1 of \cite{EgMoSi} and, for a sketch of a proof, Theorem 3.7.10 of \cite{Rie}.

\begin{PropSub}\label{thm:assoc_enr_adj}
Let $F \nsststile{\epsilon}{\eta} G : \sL \rightarrow \X$ be a symmetric monoidal adjunction between closed symmetric monoidal categories (as in \bref{par:given_smcadj}).
\begin{enumerate}
\item There is an $\X$-adjunction
\begin{equation}\label{eqn:assoc_enr_adj}
\xymatrix {
\aeX \ar@/_0.5pc/[rr]_{\acute{F}}^(0.4){\eta}^(0.6){\epsilon}^{\top} & & G_*\aeL \ar@/_0.5pc/[ll]_{\grave{G}}
}
\end{equation}
whose underlying ordinary adjunction may be identified with $F \nsststile{\epsilon}{\eta} G$.
\item The $\X$-category $G_*\aeL$ is cotensored.  For all $X \in \X$ and $E \in \sL$, a cotensor $[X,E]$ in $G_*\aeL$ may be gotten as the internal hom $\aeL(FX,E)$ in $\sL$.
\end{enumerate}
\end{PropSub}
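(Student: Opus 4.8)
The plan is to write down the enriched adjunction explicitly and reduce its verification to the monoidal data already in hand. Since $(-)_*$ is a $2$-functor \pbref{par:ch_base}, applying it to the monoidal adjunction $F \nsststile{\epsilon}{\eta} G$ yields a $2$-adjunction $F_* \nsststile{\epsilon_*}{\eta_*} G_*$ in $\TWOCCAATT$, and by its very definition $\acute F = G_*(\grave F)\circ\eta_{*,\aeX}$ is the transpose of the $\sL$-functor $\grave F$, so $\acute F$ agrees with $F$ on objects (both $\eta_*$ and change of base being identities on objects, while $\grave F$ acts as $F$). I would then propose the \emph{ordinary} unit $\eta$ and counit $\epsilon$ as unit and counit of an $\X$-adjunction $\acute F \dashv \grave G$, and check: (i) that the families $(\eta_X)$ and $(\epsilon_E)$ underlie $\X$-natural transformations $1_{\aeX}\Rightarrow\grave G\acute F$ and $\acute F\grave G\Rightarrow 1_{G_*\aeL}$; and (ii) the two triangle identities — but (ii) is then automatic, since each side is an $\X$-natural transformation whose components are the ordinary ones, for which the triangle identities hold. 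Thus all of (1), including the identification of the underlying ordinary adjunction with $F\dashv G$, rests on (i). Equivalently, one may produce via enriched Yoneda an $\X$-natural isomorphism $(G_*\aeL)(\acute F X,E)=G\aeL(FX,E)\cong\aeX(X,GE)=\aeX(X,\grave G E)$ whose underlying bijection is the composite, for $W\in\X$,
\[\X(W,G\aeL(FX,E))\cong\sL(FW,\aeL(FX,E))\cong\sL(FW\otimes FX,E)\cong\sL(F(W\otimes X),E)\cong\X(W\otimes X,GE)\cong\X(W,\aeX(X,GE)),\]
which uses $F\dashv G$ twice, the closed structures of $\X$ and $\sL$, and the \emph{strong} monoidality of $F$ for the middle step; at $W=I_\X$ this recovers the ordinary adjunction bijection since $FI_\X\cong I_\sL$.

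\textbf{Main obstacle.} The crux is precisely the $\X$-naturality in (i) — equivalently, that the displayed isomorphism is $\X$-natural and not merely ordinarily natural. Here I would have to carry the monoidal structure morphisms of $F$ and $G$ through the construction and invoke the coherence conditions saying that $\eta$ and $\epsilon$ are \emph{monoidal} transformations: the $\X$-functor structure of $\acute F$ is governed, through the $2$-functor $c$ of \bref{par:cl_func}, by the closed-functor structure of $F$, and that of $\grave G$ is the closed-functor structure of $G$, so the required compatibility along $\eta$ unwinds exactly to the monoidality of $\eta$ (and dually for $\epsilon$). Indeed \bref{prop:comp_assoc_enr_fun} identifies $\grave G\circ G_*\grave F$ with $\grave{(GF)}$, so that $\grave G\acute F$ is $\grave{(GF)}$ precomposed with the change-of-base unit and the enriched unit is literally induced by the monoidal natural transformation $\eta:1_\X\to GF$. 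Since the result is folklore, one may alternatively just cite \S 4, Prop. 1 of \cite{EgMoSi} or Theorem 3.7.10 of \cite{Rie}.

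\textbf{Part (2).} Granting (1), I would set $[X,E]:=\aeL(FX,E)$ and verify the cotensor universal property directly. For $E'\in\sL$ there is an isomorphism $\aeL(E',\aeL(FX,E))\cong\aeL(FX,\aeL(E',E))$ in $\sL$, both sides representing $V\mapsto\sL(V\otimes E'\otimes FX,E)$ by associativity, symmetry and closedness; applying $\grave G$ and then the hom-isomorphism of (1) (with $E$ replaced by $\aeL(E',E)$) gives $(G_*\aeL)(E',\aeL(FX,E))\cong G\aeL(FX,\aeL(E',E))\cong\aeX(X,G\aeL(E',E))=\aeX(X,(G_*\aeL)(E',E))$, and one checks this is $\X$-natural in $E'$ from the $\sL$-naturality of the internal-hom isomorphisms of $\sL$ together with the $\X$-naturality from (1). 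This exhibits $\aeL(FX,E)$ as a cotensor $[X,E]$ in $G_*\aeL$, completing the proof.
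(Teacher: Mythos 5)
The paper gives no proof of this proposition---it is stated as folklore, with citations to \cite{EgMoSi} and \cite{Rie}---so there is nothing internal to compare against; your sketch is a correct outline of the standard argument those references supply, and it rightly isolates the $\X$-naturality of $\eta$ and $\epsilon$ (which unwinds to the monoidality of the unit and counit, with strong monoidality of $F$ feeding the middle step of your representable computation) as the only substantive verification. Your derivation of the cotensor in part (2), via the internal-hom swap in $\sL$ followed by the hom-isomorphism of part (1), also agrees with the paper's own account in \bref{par:cotensor_dual_of_free} of the hom-cotensor $\X$-adjunction as a composite through $G_*$ of the internal-hom adjunction of $\sL$; I see no gaps.
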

\begin{ParSub} \label{par:cotensor_dual_of_free}
Further to \bref{thm:assoc_enr_adj}.2, one may obtain the needed `hom-cotensor' $\X$-adjunction
$$
\xymatrix {
\aeX \ar@/_0.5pc/[rr]_{[-,E]}^(0.4){\delta}^(0.6){\sigma}^{\top} & & {(G_*\aeL)^\op} \ar@/_0.5pc/[ll]_{(G_*\aeL)(-,E)}
}
$$
as exactly the following composite $\X$-adjunction
$$
\xymatrix {
\aeX \ar@/_0.5pc/[rr]_{\acute{F}}^(0.4){\eta}^(0.6){\epsilon}^{\top} & & G_*\aeL \ar@/_0.5pc/[ll]_{\grave{G}} \ar@/_0.5pc/[rr]_{G_*(\aeL(-,E))}^(0.4){}^(0.6){}^{\top} & & {G_*(\aeL^\op)\;,} \ar@/_0.5pc/[ll]_{G_*(\aeL(-,E))}
}
$$
in which the rightmost adjunction is gotten by applying $G_*:\LCAT \rightarrow \XCAT$ to the $\sL$-adjunction
$$
\xymatrix {
\aeL \ar@/_0.5pc/[rr]_{\aeL(-,E)}^(0.4){}^(0.6){}^{\top} & & {\aeL^\op\;.} \ar@/_0.5pc/[ll]_{\aeL(-,E)}
}
$$
\end{ParSub}
\begin{ExaSub}
Our principal example of a situation as in \bref{thm:assoc_enr_adj} is provided by \bref{thm:conv_smadj}, where $\X$ and $\sL$ are the categories of \textit{convergence spaces} and \textit{convergence vector spaces}, respectively.
\end{ExaSub}

\begin{PropSub} \label{thm:compn_assoc_enr_adj}
Let $\xymatrix {\W \ar@/_0.5pc/[rr]_F^(0.4){}^(0.6){}^{\top} & & \V \ar@/_0.5pc/[ll]_G}$ and $\xymatrix {\V \ar@/_0.5pc/[rr]_{L}^(0.4){}^(0.6){}^{\top} & & \U \ar@/_0.5pc/[ll]_{R}}$ be symmetric monoidal adjunctions between symmetric monoidal closed categories.  Then the $\W$-adjunction
$$
\xymatrix {
\aeW \ar@/_0.5pc/[rr]_{\acute{(LF)}}^(0.4){}^(0.6){}^{\top} & & (GR)_*\aeU \ar@/_0.5pc/[ll]_{\grave{(GR)}}
}
$$
associated to the composite symmetric monoidal adjunction $LF \dashv GR$ is isomorphic to the composite $\W$-adjunction
$$
\xymatrix {
\aeW \ar@/_0.5pc/[rr]_{\acute{F}}^(0.4){\eta}^(0.6){}^{\top} & & G_*\aeV \ar@/_0.5pc/[ll]_{\grave{G}} \ar@/_0.5pc/[rr]_{G_*\acute{L}}^(0.4){}^(0.6){}^{\top} & & {G_*R_*\aeU\;,} \ar@/_0.5pc/[ll]_{G_*\grave{R}}
}
$$
\end{PropSub}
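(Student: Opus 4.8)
The plan is to observe that the two $\W$-adjunctions in question have literally the same right adjoint $\W$-functor, and then to invoke the uniqueness of adjoints in the 2-category $\WCAT$ via \bref{thm:uniq_adj}. No genuine computation should be needed: the entire content is organizing the already-established results.

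First I would check that the displayed composite is genuinely a $\W$-adjunction. By \bref{thm:assoc_enr_adj}.1 applied to $F \dashv G$, the pair $\acute{F} \dashv \grave{G}$ is a $\W$-adjunction; applying the same result to $L \dashv R$ yields a $\V$-adjunction $\acute{L} \dashv \grave{R}$, and since $(-)_* : \MMCCAATT \to \TWOCCAATT$ is a 2-functor \pbref{par:ch_base}, the change-of-base 2-functor $G_* : \VCAT \to \WCAT$ carries this to a $\W$-adjunction $G_*\acute{L} \dashv G_*\grave{R}$. Hence the composite $G_*\acute{L} \circ \acute{F} \dashv \grave{G} \circ G_*\grave{R}$ is a $\W$-adjunction, with right adjoint $\grave{G} \circ G_*\grave{R}$ having domain $G_*R_*\aeU$. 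I would then identify this right adjoint with $\grave{(GR)}$: again by 2-functoriality of $(-)_*$ we have $(GR)_* = G_*R_*$, so the domains $(GR)_*\aeU$ and $G_*R_*\aeU$ are equal on the nose, while \bref{prop:comp_assoc_enr_fun}, taken with $M := R$ and $N := G$, gives the equality of $\W$-functors $\grave{(GR)} = \grave{G} \circ G_*\grave{R}$.

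Finally I would note that the composite $LF \dashv GR$ is itself a symmetric monoidal adjunction between closed symmetric monoidal categories, so \bref{thm:assoc_enr_adj} applies to it and produces the $\W$-adjunction $\acute{(LF)} \dashv \grave{(GR)}$, whose right adjoint is $\grave{(GR)} = \grave{G} \circ G_*\grave{R}$. Thus the associated $\W$-adjunction to $LF \dashv GR$ and the displayed composite $\W$-adjunction have the very same right adjoint, so \bref{thm:uniq_adj}, applied in the 2-category $\K = \WCAT$, shows they are isomorphic $\W$-adjunctions. The only mild bookkeeping points — that $G_*$ preserves the relevant adjunction and that $(GR)_*\aeU$ coincides with $G_*R_*\aeU$ — are immediate from 2-functoriality of $(-)_*$, so I do not anticipate any real obstacle; the crux, matching the right adjoints, is exactly \bref{prop:comp_assoc_enr_fun}.
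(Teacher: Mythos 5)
Your proposal is correct and follows essentially the same route as the paper: identify the right adjoints of the two $\W$-adjunctions via \bref{prop:comp_assoc_enr_fun} (with $M := R$, $N := G$) and then conclude by the uniqueness of adjoints \pbref{thm:uniq_adj} in $\WCAT$. The extra bookkeeping you supply — that $G_*$ carries the $\V$-adjunction $\acute{L} \dashv \grave{R}$ to a $\W$-adjunction and that $(GR)_*\aeU = G_*R_*\aeU$ by 2-functoriality of $(-)_*$ — is implicit in the paper's one-line argument and is accurate.
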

\begin{proof}
By \bref{prop:comp_assoc_enr_fun}, we deduce that the right adjoints of these $\W$-adjunctions are equal, and the result follows by \bref{thm:uniq_adj}.
\end{proof}

\section{Commutative monads and symmetric monoidal closed adjunctions} \label{sec:comm_mnd_smc_adj}

Let $\X = (\X,\btimes,I)$ be a closed symmetric monoidal category.

\begin{DefSub}\label{def:comm_mnd}(Kock \cite{Kock:Comm})
Let $\TT = (\bT,\delta,\kappa)$ be an $\X$-monad on $\aeX$.
\begin{enumerate}
\item For objects $X,Y$ in $\X$, we define morphisms
$$t'_{XY}:T X \btimes Y \rightarrow T(X \btimes Y)\;,\;\;\;\;t''_{XY}:X \btimes T Y \rightarrow T(X \btimes Y)$$
as the transposes of the following composite morphisms
$$Y \rightarrow \aeX(X,X \btimes Y)
\xrightarrow{\bT} \aeX(T X,T(X \btimes Y))\;,\;\;\;\;X \rightarrow \aeX(Y,X \btimes Y) \xrightarrow{\bT} \aeX(T Y,T(X \btimes Y))\;.$$
\item We define morphisms $\otimes_{XY}$, $\widetilde{\otimes}_{XY}$ as the following composites:
$$\otimes_{XY} := (T X \btimes T Y \xrightarrow{t''_{TX Y}} T(TX \btimes Y) \xrightarrow{T t'_{XY}} TT(X \btimes Y) \xrightarrow{\kappa} T (X \btimes Y))\;,$$
$$\widetilde{\otimes}_{XY} := (T X \btimes T Y \xrightarrow{t'_{X TY}} T (X \btimes T Y) \xrightarrow{T t''_{XY}} T T (X \btimes Y) \xrightarrow{\kappa} T (X \btimes Y))\;.$$
\item $\TT$ is \textit{commutative} if $\otimes_{XY} = \widetilde{\otimes}_{XY}$ for all objects $X,Y$ in $\X$.
\end{enumerate}
\end{DefSub}
\begin{RemSub} \label{rem:comm_inv_under_iso}
Not surprisingly, the property of commutativity is invariant under isomorphism of $\X$-monads \pbref{par:adj_and_mnd}, as one readily checks.
\end{RemSub}

\begin{ThmSub}\label{thm:comm_sm_mnd}\textnormal{(Kock \cite{Kock:SmComm})}
Let $\TT = (T,\eta,\mu)$ be an ordinary monad on $\X$.  Then there is a bijection between the following kinds of structure on $\TT$:
\begin{enumerate}
\item $\X$-enrichments of $T$ making $\TT$ a commutative $\X$-monad on $\aeX$;
\item monoidal structures on $T$ making $\TT$ a symmetric monoidal monad.
\end{enumerate}
In particular, if $\TT$ is equipped with the structure of a symmetric monoidal monad, then the associated $\X$-enrichment of $T$ consists of the structure morphisms
$$\aeX(X,Y) \rightarrow \aeX(TX,TY)$$
gotten as the transposes of the following composites:
\begin{equation}\label{eqn:transp_tc}TX \btimes \aeX(X,Y) \xrightarrow{1 \btimes \eta} TX \btimes T\aeX(X,Y) \rightarrow T(X \btimes \aeX(X,Y)) \xrightarrow{T(\Ev)} TY\;.\end{equation}
\end{ThmSub}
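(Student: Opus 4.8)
The plan is to obtain Theorem \bref{thm:comm_sm_mnd}, following Kock \cite{Kock:SmComm}, by composing two equivalences that rest on the closedness of $\X$. First I would recall the correspondence between $\X$-functor structures on the ordinary endofunctor $T$ and \emph{(right) strengths}, i.e.\ families $t''_{XY}:X \btimes TY \rightarrow T(X \btimes Y)$ natural in $X,Y$ and compatible with the associativity and unit constraints of $\btimes$: from an enrichment $\bT$ one reads off $t''$ by the very formula recorded in \bref{def:comm_mnd}.1, and conversely, from a strength one recovers an enrichment by transposing the composite $TX \btimes \aeX(X,Y) \xrightarrow{t'_{X,\aeX(X,Y)}} T\bigl(X \btimes \aeX(X,Y)\bigr) \xrightarrow{T\Ev} TY$ across the tensor--hom adjunction, where the \emph{left strength} $t'$ is obtained from $t''$ by conjugating with the symmetry of $\btimes$. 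These two passages are mutually inverse by the triangle identities of the tensor--hom adjunction, and under the correspondence the requirement that $\eta$ and $\mu$ be $\X$-natural with respect to the $\X$-functor structure translates precisely into the two axioms saying that $(T,\eta,\mu,t'')$ is a \emph{strong monad}. Thus the first step yields a bijection between $\X$-monad structures on $\TT$ and strong-monad structures on $\TT$, and one should note that the derived maps $t'$, $t''$ of \bref{def:comm_mnd}.1 are exactly the left and right strengths of this strong monad.

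The second step identifies commutative strong monads with symmetric monoidal monads. Given a commutative strong monad, so that $\otimes_{XY} = \widetilde{\otimes}_{XY}$ in the notation of \bref{def:comm_mnd}.2, I would set $\psi_{XY} := \otimes_{XY} = \widetilde{\otimes}_{XY}:TX\btimes TY\rightarrow T(X\btimes Y)$ and $\psi_0 := \eta_I$, and verify that $(T,\psi,\psi_0)$ is a symmetric monoidal functor for which $\eta$ and $\mu$ are monoidal natural transformations --- i.e.\ that $\TT$ thereby becomes a symmetric monoidal monad. Conversely, from a symmetric monoidal monad $(T,\psi,\psi_0,\eta,\mu)$ I would put $t''_{XY} := \psi_{XY}\circ(\eta_X \btimes TY)$ and $t'_{XY} := \psi_{XY}\circ(TX \btimes \eta_Y)$, check the strength and strong-monad axioms, and check the recombination identities $\kappa \circ T t'_{XY}\circ t''_{TX,Y} = \psi_{XY} = \kappa \circ T t''_{XY}\circ t'_{X,TY}$; these simultaneously show that the strong monad just produced is commutative and that the round trip from symmetric monoidal monads to strong monads and back is the identity, while the reverse round trip reduces to the identity $\psi_{XY}\circ(\eta_X\btimes\eta_Y) = \eta_{X\btimes Y}$ together with a unit axiom for the strength. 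Composing the two steps and unwinding the definitions, the $\X$-enrichment of $T$ corresponding to a symmetric monoidal monad structure on $\TT$ is the transpose of $T\Ev \circ t'_{X,\aeX(X,Y)} = T\Ev \circ \psi_{X,\aeX(X,Y)}\circ(1 \btimes \eta)$, which is exactly the composite displayed in \eqref{eqn:transp_tc}; this yields the final assertion of the theorem.

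I expect the only real obstacle to be the diagram chase at the heart of the second step: that the single equation $\otimes_{XY} = \widetilde{\otimes}_{XY}$ matches \emph{exactly} the associativity (hexagon) axiom for $\psi$ as a monoidal-functor structure, and that the hexagon governing the monad multiplication $\mu$ then follows formally from the strong-monad axioms together with the interchange of $t'$ with $t''$. The unit constraints, the symmetry axiom, and all the naturality statements reduce to short manipulations with the triangle identities and the coherence of $\btimes$; it is the associativity bookkeeping --- most transparently carried out by transposing everything across the tensor--hom adjunction, so that each side becomes a single morphism into an internal hom --- that must be done with care. Since the result is due to Kock \cite{Kock:SmComm}, in the paper I would present this two-step structure in outline, indicate the key recombination identities, and refer to \cite{Kock:SmComm,Kock:Comm} for the remaining routine verifications, while recording \eqref{eqn:transp_tc} explicitly since that is the description actually used in the sequel.
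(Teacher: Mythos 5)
The paper offers no proof of this theorem: it is quoted from Kock \cite{Kock:SmComm} and used as a black box in \bref{sec:comm_mnd_smc_adj}, so there is no in-text argument to compare yours against. That said, your two-step reconstruction is exactly Kock's own route and is correct in outline: (i) for closed $\X$, $\X$-enrichments of the endofunctor $T$ biject with tensorial strengths, under which $\X$-naturality of $\eta$ and $\mu$ becomes the strong-monad axioms and the maps $t'$, $t''$ of \bref{def:comm_mnd}.1 become the two strengths; (ii) commutative strong monads biject with symmetric monoidal monads via $\psi_{XY}=\otimes_{XY}=\widetilde{\otimes}_{XY}$, $\psi_0=\eta_I$, and conversely $t''=\psi\circ(\eta\btimes 1)$, $t'=\psi\circ(1\btimes\eta)$, with the recombination identities showing the two round trips are identities. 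Your derivation of \eqref{eqn:transp_tc} as the transpose of $T(\Ev)\circ\psi_{X,\aeX(X,Y)}\circ(1\btimes\eta)$ is consistent with the paper's conventions. One caution on the point you yourself flag as delicate: the equation $\otimes_{XY}=\widetilde{\otimes}_{XY}$ does not transcribe verbatim into the associativity hexagon for $\psi$; in the standard bookkeeping each of $\otimes$ and $\widetilde{\otimes}$ separately satisfies part of the monoidal-functor and unit axioms, and commutativity is what is needed to complete associativity and to make $\mu$ a monoidal transformation, while conversely any monoidal-monad structure compatible with the strength is forced to equal $\otimes=\widetilde{\otimes}$. Since you defer these verifications to \cite{Kock:SmComm,Kock:Comm}, precisely as the paper does, the proposal is an acceptable expansion of what the paper leaves implicit.
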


\begin{DefSub}
Given a symmetric monoidal monad $\TT = (T,\eta,\mu)$ on $\X$, let $\TT^c = (\bT^c,\eta,\mu)$ denote the associated commutative $\X$-monad on $\aeX$ \pbref{thm:comm_sm_mnd}.
\end{DefSub}

\begin{PropSub} \label{thm:tc_via_tgrave}
Given a symmetric monoidal monad $\TT = (T,\eta,\mu)$ on $\X$, the associated $\X$-functor $\bT^c:\aeX \rightarrow \aeX$ is the composite
$$\aeX \xrightarrow{(\eta_*)_{\aeX}} T_*\aeX \xrightarrow{\grave{T}} \aeX\;,$$
where $\eta_*:1_{\XCAT} = (1_{\X})_* \rightarrow T_*$ is gotten by applying $(-)_*$ \pbref{par:ch_base} to the symmetric monoidal transformation $\eta$.
\end{PropSub}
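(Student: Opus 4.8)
The plan is to verify the asserted equality of $\X$-functors pointwise: since an $\X$-functor is determined by its object-map together with its hom-morphisms, it suffices to check that $\bT^c$ and the composite $\grave{T} \circ (\eta_*)_{\aeX}$ agree on both. On objects this is immediate --- the underlying ordinary functor of $\bT^c$ is $T$, while $(\eta_*)_{\aeX}$, being the $\aeX$-component of the change-of-base 2-natural transformation obtained from the symmetric monoidal natural transformation $\eta:1_\X \rightarrow T$, is the identity on objects, and $\grave{T}$ acts on objects as $T$ by \bref{par:ch_base}; hence so does the composite.

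It remains to compare hom-morphisms at a pair $(X,Y)$. Unwinding \bref{par:ch_base}: the hom-morphism of $(\eta_*)_{\aeX}:\aeX \rightarrow T_*\aeX$ is the component $\eta_{\aeX(X,Y)}:\aeX(X,Y) \rightarrow T\aeX(X,Y) = (T_*\aeX)(X,Y)$, and the hom-morphism $\grave{T}_{XY}:T\aeX(X,Y) \rightarrow \aeX(TX,TY)$ is the transpose of the composite
$$TX \btimes T\aeX(X,Y) \rightarrow T(X \btimes \aeX(X,Y)) \xrightarrow{T(\Ev)} TY,$$
whose unlabelled arrow is the lax structure morphism of the symmetric monoidal functor $T$ --- precisely the morphism appearing in \eqref{eqn:transp_tc}, since endowing $T$ with the structure of a symmetric monoidal monad is exactly endowing it with this symmetric monoidal functor structure. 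The hom-morphism of the composite at $(X,Y)$ is therefore $\grave{T}_{XY} \circ \eta_{\aeX(X,Y)}$, and by the naturality of the adjunction $TX \btimes (-) \dashv \aeX(TX,-)$ --- namely, if $g$ is the transpose of $\bar g$ then $g \circ h$ is the transpose of $\bar g \circ (1_{TX} \btimes h)$ --- this equals the transpose of
$$TX \btimes \aeX(X,Y) \xrightarrow{1 \btimes \eta} TX \btimes T\aeX(X,Y) \rightarrow T(X \btimes \aeX(X,Y)) \xrightarrow{T(\Ev)} TY,$$
which is exactly the composite \eqref{eqn:transp_tc} of \bref{thm:comm_sm_mnd}, i.e.\ the hom-morphism of $\bT^c$ at $(X,Y)$. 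Hence $\grave{T} \circ (\eta_*)_{\aeX} = \bT^c$.

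I do not anticipate a genuine obstacle; the argument is a direct unwinding of definitions. The one place demanding care is the bookkeeping in \bref{par:ch_base} --- correctly identifying the hom-morphisms of $(\eta_*)_{\aeX}$ and of $\grave{T}$ --- together with the trivial but essential observation that the lax structure morphism of $T$ used to build $\grave{T}$ is the very same one occurring in Kock's formula \eqref{eqn:transp_tc}, so that no coherence comparison enters.
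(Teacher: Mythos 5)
Your proposal is correct and follows essentially the same route as the paper's proof: check agreement on objects, then observe that the hom-morphism of the composite is $\grave{T}_{XY}\circ\eta_{\aeX(X,Y)}$, whose transpose is exactly the composite \eqref{eqn:transp_tc} defining $\bT^c_{XY}$. The only difference is that you make the transposition step (naturality of the adjunction $TX\btimes(-)\dashv\aeX(TX,-)$) explicit where the paper leaves it implicit.
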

\begin{proof}
$(\eta_*)_{\aeX}$ is identity-on-objects, and $\grave{T}$ acts as $T$ on objects, so both $\bT^c$ and the indicated composite are given as $T$ on objects.  For all $X,Y \in \aeX$ the associated structure morphism of the composite $\X$-functor $\grave{T} \circ (\eta_*)_{\aeX}$ is the composite 
$$\aeX(X,Y) \xrightarrow{\eta} T\aeX(X,Y) \xrightarrow{\grave{T}_{X Y}} \aeX(TX,TY)\;,$$
whose transpose $TX \btimes \aeX(X,Y) \rightarrow TY$ is (by the definition of $\grave{T}$, \bref{par:ch_base}) exactly the composite \eqref{eqn:transp_tc} employed in defining $\bT^c_{X Y}$.
\end{proof}

\begin{PropSub} \label{thm:enradj_assoc_smadj_ind_commmnd}
Let $\xymatrix {\X \ar@/_0.5pc/[rr]_F^(0.4){\eta}^(0.6){\epsilon}^{\top} & & \W \ar@/_0.5pc/[ll]_G}$ be a symmetric monoidal adjunction with $\X$,$\W$ symmetric monoidal closed, and let $\TT$ be the induced symmetric monoidal monad on $\X$.  Then the associated commutative $\X$-monad $\TT^c$ on $\aeX$ coincides with the $\X$-monad $\TT'$ induced by the $\X$-adjunction
$$
\xymatrix {
\aeX \ar@/_0.5pc/[rr]_{\acute{F}}^(0.4){\eta}^(0.6){\epsilon}^{\top} & & G_*\aeW \ar@/_0.5pc/[ll]_{\grave{G}}
}
$$
gotten via \bref{thm:assoc_enr_adj}.  In particular, $\TT'$ is commutative.
\end{PropSub}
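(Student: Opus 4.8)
The plan is to show that the $\X$-functor underlying $\TT^c$ is literally the $\X$-functor $\grave{G}\acute{F}$ underlying $\TT'$, and then to observe that the two $\X$-monad structures carried by this common $\X$-functor have the same components and hence coincide; commutativity of $\TT'$ then follows from that of $\TT^c$.

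For the functor part, I would begin from \bref{thm:tc_via_tgrave}, which tells us that $\bT^c$ is the composite $\aeX \xrightarrow{(\eta_*)_{\aeX}} T_*\aeX \xrightarrow{\grave{T}} \aeX$, where $T = GF$ is regarded as a symmetric monoidal functor and $\eta_*$ is obtained by applying $(-)_*$ to the monoidal unit $\eta \colon 1_{\X} \to T$. Since $T = GF$ is the composite of the symmetric monoidal functors $F \colon \X \to \W$ and $G \colon \W \to \X$ between symmetric monoidal closed categories, Proposition \bref{prop:comp_assoc_enr_fun} identifies $\grave{T} = \grave{(GF)}$ with the composite $T_*\aeX = G_*F_*\aeX \xrightarrow{G_*\grave{F}} G_*\aeW \xrightarrow{\grave{G}} \aeX$. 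On the other hand, because $(-)_*$ is a 2-functor carrying the monoidal adjunction $F \dashv G$ to the 2-adjunction $F_* \dashv G_*$ with unit $\eta_*$, the $\X$-functor $\acute{F}$ of \bref{par:given_smcadj} — being the transpose of $\grave{F}$ along $F_* \dashv G_*$ — is exactly $(G_*\grave{F}) \circ (\eta_*)_{\aeX} \colon \aeX \to G_*F_*\aeX \to G_*\aeW$. Combining, $\bT^c = \grave{T} \circ (\eta_*)_{\aeX} = \grave{G} \circ G_*\grave{F} \circ (\eta_*)_{\aeX} = \grave{G} \circ \acute{F}$, which is precisely the $\X$-functor underlying $\TT'$.

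It remains to compare the unit and multiplication. By \bref{thm:assoc_enr_adj}.1 the ordinary adjunction underlying $\acute{F} \dashv \grave{G}$ may be identified with $F \dashv G$, so the unit and multiplication $\X$-natural transformations of $\TT'$ have components $\eta_X \colon X \to TX$ and $\mu_X = G\epsilon_{FX} \colon TTX \to TX$ respectively; but by construction $\TT^c = (\bT^c,\eta,\mu)$ has the very same components. Since an $\X$-natural transformation between a fixed pair of $\X$-functors is determined by its family of components, and since we have shown $\bT^c = \grave{G}\acute{F}$, it follows that the unit and multiplication of $\TT^c$ and of $\TT'$ agree, i.e. $\TT^c = \TT'$ as $\X$-monads on $\aeX$. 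Finally, $\TT^c$ is a commutative $\X$-monad by its construction via Kock's bijection \bref{thm:comm_sm_mnd}, so $\TT'$ is commutative as well (cf. \bref{rem:comm_inv_under_iso}).

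The step I expect to require the most care is the identification $\acute{F} = (G_*\grave{F}) \circ (\eta_*)_{\aeX}$: one must unwind the adjoint transpose of $\grave{F}$ under the 2-adjunction $F_* \dashv G_*$ and verify that the 2-natural transformation ``$\eta_*$'' occurring there is literally the same as the one appearing in \bref{thm:tc_via_tgrave} (both being $(-)_*$ applied to the monoidal unit $\eta$), and similarly that the two identifications $T_*\aeX = (GF)_*\aeX = G_*F_*\aeX$ are compatible. Once this bookkeeping is in place, the argument is entirely formal.
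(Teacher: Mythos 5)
Your proof is correct and follows essentially the same route as the paper: it decomposes $\bT^c$ via \bref{thm:tc_via_tgrave} and \bref{prop:comp_assoc_enr_fun}, identifies $\acute{F}$ with $G_*\grave{F}\circ(\eta_*)_{\aeX}$ from its definition as a transpose, and matches units and multiplications via the identification of underlying ordinary adjunctions. No changes needed.
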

\begin{proof}
Letting $\TT = (T,\eta,\mu)$, we have that $\TT^c = (\bT^c,\eta,\mu)$.  By \bref{thm:assoc_enr_adj}, the underlying ordinary adjunction of $\acute{F} \nsststile{\epsilon}{\eta} \grave{G}$ is $F \nsststile{\epsilon}{\eta} G$, so the underlying ordinary monad of $\TT'$ is equal to that of $\TT$; hence $\TT' = (\bT',\eta,\mu)$, where $\bT' = \grave{G}\acute{F}$.  Further, $\bT' = \bT^c$, since the following diagram commutes
$$
\xymatrix@R=4ex {
{\aeX} \ar[rr]^{\bT^c} \ar[dr]|{(\eta_*)_{\aeX}} \ar@/_1.5pc/[ddr]_{\acute{F}} &                                                                              & {\aeX} \\
                                                                             & {(GF)_*\aeX = G_*F_*\aeX} \ar[d]|{G_*\grave{F}} \ar[ur]|{\grave{(GF)}}       &        \\
                                                                             & {G_*\aeW} \ar@/_1.5pc/[uur]_{\grave{G}}                                         &
}
$$
by \bref{thm:tc_via_tgrave}, \bref{prop:comp_assoc_enr_fun}, and the definition of $\acute{F}$ \pbref{par:given_smcadj}.
\end{proof}

\section{Symmetric monoidal closed reflections} \label{sec:smc_refl}

\begin{ThmSub}\label{thm:day_refl}\textnormal{(Day \cite{Day:Refl})}
Let $\B$ be a symmetric monoidal closed category, let $\C$ be a full, replete reflective subcategory of $\B$, with associated adjunction $K \nsststile{}{\rho} J : \C \hookrightarrow \B$, and suppose that
$$\forall B \in \B, C \in \C \;:\; \aeB(B,C) \in \C\;.$$
Then the given adjunction acquires the structure of a symmetric monoidal adjunction, with $\C$ a \textit{closed} symmetric monoidal category.
\end{ThmSub}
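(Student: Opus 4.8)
The plan is to transport the symmetric monoidal closed structure of $\B$ onto $\C$ along the reflector, using the exponential-ideal hypothesis $\aeB(B,C) \in \C$ precisely to control the interaction of the reflection with $\otimes$. Since $\C$ is full and replete in $\B$ I suppress the inclusion $J$, writing $K : \B \to \C$ for the reflector and $\rho : 1_\B \to K$ for the unit (so $KB \in \C$), and I recall that $\rho_C$ is invertible for $C \in \C$. As candidate structure on $\C$ I take the tensor $C_1 \otimes_{\C} C_2 := K(C_1 \otimes C_2)$, the unit $I_{\C} := KI$, and the internal hom $\aeC(C_1,C_2) := \aeB(C_1,C_2)$, the last of which already lies in $\C$ by hypothesis. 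The goal is to show these assemble into a symmetric monoidal closed structure for which $K \dashv J$ is a symmetric monoidal adjunction.

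The one substantive step is the following lemma: \emph{if $g$ is a morphism of $\B$ with $Kg$ invertible, then $K(g \otimes 1_B)$ and $K(1_B \otimes g)$ are invertible for every $B \in \B$.} To prove it, I would first note that $Kf$ is invertible iff $\B(f,C)$ is a bijection for every $C \in \C$ --- this is immediate from $\B(f,C) \cong \C(Kf,C)$ and the Yoneda lemma. Applying the tensor--hom adjunction of $\B$, one has $\B(g \otimes 1_B, C) \cong \B(g, \aeB(B,C))$, and here $\aeB(B,C) \in \C$ by hypothesis while $\B(g,D)$ is a bijection for all $D \in \C$ because $Kg$ is invertible; hence $\B(g \otimes 1_B, C)$ is a bijection for all $C \in \C$ and $K(g \otimes 1_B)$ is invertible. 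The statement for $1_B \otimes g$ follows by symmetry of $\otimes$ in $\B$. In particular $K(\rho_{B_1} \otimes \rho_{B_2})$ is invertible, so the canonical comparison $K(B_1 \otimes B_2) \to K(KB_1 \otimes KB_2) = KB_1 \otimes_{\C} KB_2$ induced by $\rho_{B_1} \otimes \rho_{B_2}$ is an isomorphism.

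With the lemma in hand I would exhibit $K$ as a strong symmetric monoidal functor $\B \to \C$, with structure isomorphisms the comparisons just produced and with unit comparison the identity $KI = I_{\C}$, and then transport the associativity, unit and symmetry constraints of $\B$ through $K$: because every object of $\C$ has the form $KB$ and $K$ is strong monoidal, each coherence diagram in $\C$ reduces, after whiskering by $K$, to the corresponding diagram in $\B$; this is the content of the lemmas of Day \cite{Day:Refl}. The right adjoint $J$ then carries the mate lax symmetric monoidal structure, making $K \dashv J$ a symmetric monoidal adjunction with $\C$ symmetric monoidal. Closedness of $\C$ is now formal: for $C_1, C_2, C_3 \in \C$ there are natural bijections
\[
\C(C_1 \otimes_{\C} C_2, C_3) \;\cong\; \B(C_1 \otimes C_2, C_3) \;\cong\; \B(C_1, \aeB(C_2, C_3)) \;\cong\; \C(C_1, \aeB(C_2, C_3)),
\]
the last because $\aeB(C_2,C_3) \in \C$, exhibiting $\aeB(C_2,-)$ (corestricted to $\C$) as right adjoint to $-\otimes_{\C} C_2$.

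The main obstacle I expect is not any isolated computation but the bookkeeping in the third step: checking that the transported constraints are natural and satisfy the pentagon and triangle, and that the lax structure on $J$ really is that of a monoidal adjunction, all require careful tracking of the comparison isomorphism $K(B_1 \otimes B_2) \cong KB_1 \otimes_{\C} KB_2$ together with its naturality and its compatibility with $\rho$ and with the associator of $\B$. I would discharge this cleanly by first isolating the general principle --- a reflective localization $K \dashv J$ of a symmetric monoidal category whose reflector inverts every $\rho_B \otimes 1_{B'}$ and $1_{B'} \otimes \rho_B$ admits a unique symmetric monoidal structure making $K$ strong symmetric monoidal --- which is exactly where the lemma does its work, after which everything else, including the closed structure, is the formal consequence displayed above.
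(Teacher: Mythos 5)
Your proposal is correct in outline, but it is worth noting that the paper itself does essentially no work here: its proof consists of citing Day's reflection theorem (\cite{Day:Refl}, 1.2) for the monoidal closed structure and then observing that the symmetry of the induced monoidal functors $K$, $J$ follows from naturality of the symmetry $s$ of $\B$ and of the unit $\rho$. What you have written is, in effect, a reconstruction of the proof of the cited result. Your key lemma --- that the exponential-ideal hypothesis $\aeB(B,C) \in \C$ forces $K$ to invert $g \otimes 1_B$ and $1_B \otimes g$ whenever it inverts $g$, proved via $\B(g \otimes 1_B, C) \cong \B(g, \aeB(B,C))$ and the Yoneda-style characterization of $\Sigma_K$ --- is exactly the engine of Day's argument, and your use of it to show that $K(\rho_{B_1} \otimes \rho_{B_2})$ is invertible, hence that $K$ is strong symmetric monoidal for the transported tensor $C_1 \otimes_{\C} C_2 := K(C_1 \otimes C_2)$, is the right decomposition. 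The closedness argument via the chain of natural bijections $\C(K(C_1 \otimes C_2), C_3) \cong \B(C_1, \aeB(C_2,C_3)) \cong \C(C_1, \aeB(C_2,C_3))$ is also correct and is where the hypothesis is used a second time. The coherence bookkeeping you defer (pentagon, triangle, naturality of the transported constraints, and the mate lax structure on $J$) is genuinely the remaining content of Day's lemmas, so deferring it is reasonable in a sketch; just be aware that the paper's own contribution on top of the citation is precisely the symmetry verification, which your outline subsumes but does not isolate.
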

\begin{proof}
This follows from \cite{Day:Refl}, 1.2.  Letting $s$ be the symmetry of $\B$, one checks that the property of symmetry of the monoidal functors $K$, $J$ defined in \cite{Day:Refl} follows immediately from the naturality of $s$ and $\rho$.
\end{proof}

\begin{CorSub} \label{thm:enr_refl_closed}
Let $\V$ be a symmetric monoidal closed category, let \hbox{$\bK \nsststile{\sigma}{\rho} \bJ : \eC \hookrightarrow \aeV$} be a $\V$-adjunction, where $\bJ$ is the inclusion of a full, replete sub-$\V$-category.  Then the underlying ordinary adjunction $K \nsststile{\sigma}{\rho} J : \C \hookrightarrow \V$ acquires the structure of a symmetric monoidal adjunction, with $\C$ a \textit{closed} symmetric monoidal category.  Further, the $\V$-monad on $\aeV$ induced by $\bK \nsststile{\sigma}{\rho} \bJ$ is isomorphic to the $\V$-monad induced by the $\V$-adjunction $\acute{K} \nsststile{\sigma}{\rho} \grave{J} : J_*(\aeC) \rightarrow \aeV$ \pbref{thm:assoc_enr_adj}.
\end{CorSub}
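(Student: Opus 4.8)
The plan is to derive the first assertion from Day's Theorem \bref{thm:day_refl} and the second from the uniqueness-of-adjoints result \bref{thm:uniq_adj}, after identifying the change-of-base $\V$-category $J_*\aeC$ with the sub-$\V$-category $\eC$.

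For the first assertion, note first that, since $\bJ:\eC\hookrightarrow\aeV$ is the inclusion of a full, replete sub-$\V$-category and $\bK\dashv\bJ$, the underlying ordinary adjunction $K\nsststile{\sigma}{\rho}J$ exhibits $\C$ as a full, replete reflective subcategory of $\V$. To invoke \bref{thm:day_refl} with $\B:=\V$ it therefore remains only to check that $\aeV(V,C)\in\C$ for all $V\in\V$ and $C\in\C$, and I would obtain this from enriched reflectivity: $\bJ$ is a right $\V$-adjoint whose underlying ordinary functor is fully faithful, so $\eC$ is closed under the $\V$-weighted limits that exist in $\aeV$ --- in particular under cotensors --- and the cotensor of $C$ by $V$ in $\aeV$ is precisely the internal hom $\aeV(V,C)$, whence $\aeV(V,C)\in\C$. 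Then \bref{thm:day_refl} endows $K\dashv J$ with the structure of a symmetric monoidal adjunction and makes $\C$ symmetric monoidal closed, which is the first assertion.

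For the second assertion, with $\C$ now symmetric monoidal closed I would apply \bref{thm:assoc_enr_adj} to the adjunction $K\dashv J$ to obtain the $\V$-adjunction $\acute{K}\nsststile{\sigma}{\rho}\grave{J}:J_*\aeC\to\aeV$, whose underlying ordinary adjunction is again $K\dashv J$. The key point is that this $\V$-adjunction has the same right adjoint as $\bK\nsststile{\sigma}{\rho}\bJ$; granting this, \bref{thm:uniq_adj}, applied in the 2-category $\VCAT$, yields an isomorphism between the two $\V$-adjunctions and hence between the $\V$-monads they induce on $\aeV$. To establish that the right adjoints agree I would unwind the construction of \cite{Day:Refl} underlying \bref{thm:day_refl}: there the internal hom of $\C$ may be taken to be $\aeC(C_1,C_2)=\aeV(C_1,C_2)$ --- an object of $\C$ by the first part --- and the multiplication constraint of the lax monoidal functor $J$ is a component of the reflection unit $\rho$. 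Hence $J_*\aeC$ has the same objects, and the same hom-objects $J\aeC(C_1,C_2)=\aeV(C_1,C_2)=\eC(C_1,C_2)$, as $\eC$; after checking that composition and identities likewise agree one has $J_*\aeC=\eC$. Moreover $\grave{J}$ coincides with the inclusion $\bJ$: it is $J$ on objects, and by \bref{par:ch_base} each $\grave{J}_{C_1 C_2}$ is the transpose of
$$C_1\otimes\aeV(C_1,C_2)\xrightarrow{\ \rho\ }JK\bigl(C_1\otimes\aeV(C_1,C_2)\bigr)\xrightarrow{\ J(\Ev)\ }C_2\;,$$
which is the evaluation morphism of $\V$ (since the evaluation of $\C$ transposes to that of $\V$ along $K\dashv J$), so that $\grave{J}_{C_1 C_2}=1_{\aeV(C_1,C_2)}$.

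The step I expect to be the main obstacle is this identification of $J_*\aeC$ with $\eC$ together with $\grave{J}$ with $\bJ$: it requires reconciling the abstract change-of-base $\V$-functor $\grave{(-)}$ of \bref{par:ch_base} with the evident inclusion, and so entails careful bookkeeping of Day's explicit formulas for the closed structure on $\C$ and of the transposition conventions involved. Verifying the hypothesis of Day's theorem and the closing appeal to \bref{thm:uniq_adj} are routine by comparison.
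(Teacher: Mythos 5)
Your argument for the first assertion (closure of $\eC$ under cotensors via enriched reflectivity, then Day's Theorem \bref{thm:day_refl}) is exactly the paper's. For the second assertion your overall strategy --- reduce to \bref{thm:uniq_adj} by showing the two $\V$-adjunctions share their right adjoint --- is also the paper's, but you execute the key identification differently, and in a way that creates more work than necessary. You aim for the literal equalities $J_*\aeC=\eC$ and $\grave{J}=\bJ$, which forces you to unwind Day's explicit formulas for the closed structure on $\C$ (your computation that $\grave{J}_{C_1C_2}$ transposes to $\Ev^{\V}$ is correct in outline, but the agreement of composition and identities is left as the flagged ``main obstacle''). The paper sidesteps this entirely: since $J$ is fully faithful, the counit $\sigma$ is invertible, and $\sigma$ is also the counit of $\acute{K}\dashv\grave{J}$, so $\grave{J}$ is a fully faithful \emph{$\V$-functor}; as its underlying ordinary functor is $J$ (injective on objects with image $\C$), $\grave{J}$ factors as $\bJ\circ\bP$ for an isomorphism of $\V$-categories $\bP:J_*\aeC\to\eC$. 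Then $\bP\acute{K}\dashv\bJ$ and $\bK\dashv\bJ$ have the same right adjoint, and \bref{thm:uniq_adj} gives isomorphic induced monads --- no bookkeeping of Day's constraints required, and no need for an on-the-nose identity (which in any case only holds relative to particular choices in Day's construction). Your route is viable if you complete the verification you defer, but note that an isomorphism of adjunctions is all \bref{thm:uniq_adj} needs, so the abstract full-faithfulness argument is both shorter and more robust.
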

\begin{proof}
$\eC$ is closed under $\V$-enriched weighted limits in $\aeV$, so for all $V \in \V$ and $C \in \C$, since $\aeV(V,C)$ is a cotensor $[V,C]$ in $\aeV$, this object lies in $\C$.  Hence \bref{thm:day_refl} applies.

Note that both $\V$-adjunctions in question have underlying ordinary adjunction $K \nsststile{\sigma}{\rho} J$.  Since $J$ is fully faithful, $\sigma$ is an isomorphism.  But $\sigma$ serves as the counit of the $\V$-adjunction $\acute{K} \nsststile{\sigma}{\rho}\grave{J}$, so $\grave{J}$ is a fully faithful $\V$-functor (by, e.g., \cite{Ke:Ba}, 1.11).  But $\grave{J}$ has underlying ordinary functor $J$, so $\grave{J}$ is injective on objects and has image exactly $\eC$, and therefore $\grave{J}$ factors through an isomorphism $\bP $ such that
$$
\xymatrix@R=0.5ex {
{J_*(\aeC)} \ar[dr]_{\grave{J}} \ar[rr]^{\bP}_{\sim} &                     & \eC \ar@{^{(}->}[dl]^{\bJ} \\
                                                     & \aeV                & 
}
$$
commutes.  It is now straightforward to obtain the needed isomorphism of $\V$-monads by using \bref{thm:uniq_adj}.
\end{proof}

\section{Enriched orthogonality and finite well-completeness} \label{sec:enr_orth_fwc}

In Day's paper \cite{Day:AdjFactn} (whose results we shall employ in \bref{sec:enr_adj_factn})  enriched notions of orthogonality, factorization systems, and completeness are employed, and it is the purpose of this section to examine certain aspects of their relation to the corresponding ordinary notions.  Let $\V$ be a locally small symmetric monoidal closed category.

\begin{DefSub} \label{def:enr_notions}
Let $\eA$ be a $\V$-category.
\begin{enumerate}
\item A morphism $m:B_1 \rightarrow B_2$ in $\eA$ (i.e., in $\A$) is a \textit{$\V$-mono} if $\eA(A,m):\eA(A,B_1) \rightarrow \eA(A,B_2)$ is a monomorphism in $\V$ for every object $A$ of $\eA$.  A morphism $e$ in $\eA$ is a \textit{$\V$-epi} if $f$ is a $\V$-mono in $\eA^\op$.

\item $\Mono_\V\eA$ and $\Epi_\V\eA$ are the classes of all $\V$-monos and $\V$-epis, respectively, in $\eA$.

\item For morphisms $e:A_1 \rightarrow A_2$, $m:B_1 \rightarrow B_2$ in $\eA$ we say that $e$ is \textit{$\V$-orthogonal} to $m$, written $e \downarrow_\V m$, if the commutative square
\begin{equation}\label{eqn:orth_pb}
\xymatrix@R=2ex {
\eA(A_2,B_1) \ar[rr]^{\eA(A_2,m)} \ar[d]_{\eA(e,B_1)}  & & \eA(A_2,B_2) \ar[d]^{\eA(e,B_2)} \\
\eA(A_1,B_1)  \ar[rr]^{\eA(A_1,m)}                     & & \eA(A_1,B_2)
}
\end{equation}
is a pullback in $\V$.

\item Given classes $\E$, $\M$ of morphisms in $\eA$, we define
$$\E^{\downarrow_\V} := \{m \in \mor\eA \;|\; \forall e \in \E \;:\; e \downarrow_\V m\}\;,$$
$$\M^{\uparrow_\V} := \{e \in \mor\eA \;|\; \forall m \in \E \;:\; e \downarrow_\V m\}\;.$$

\item A \textit{$\V$-prefactorization-system} on $\eA$ is a pair $(\E,\M)$ of classes of morphisms in $\eA$ such that $\E^{\downarrow_\V} = \M$ and $\M^{\uparrow_\V} = \E$.  A \textit{$\V$-factorization-system} is a $\V$-prefactorization-system such that \textit{(\E,\M)-factorizations exist} --- i.e., every morphism in $\eA$ factors as a morphism in $\E$ followed by a morphism in $\M$.  A $\V$-prefactorization system $(\E,\M)$ is said to \textit{$\V$-proper} if $\E \subs \Epi_\V\eA$ and $\M \subs \Mono_\V\eA$.

\item The morphisms in the class $\StrMono_\V\eA := (\Epi_\V\eA)^{\downarrow_\V} \cap \Mono_\V\eA$ are called \textit{$\V$-strong-monomorphisms}.

\item A \textit{$\V$-limit} of an ordinary functor $D:\J \rightarrow \A$ is a limit of $D$ that is preserved by each ordinary functor $\eA(A,-):\A \rightarrow \V$.  As special cases of $\V$-limits we define \textit{$\V$-products, $\V$-pullbacks, $\V$-fiber-products}, etc.  \textit{$\V$-colimits} are defined as $\V$-limits in $\A^\op$.
\end{enumerate}
\end{DefSub}
\begin{RemSub} \label{rem:enr_notions}
In the above definitions \pbref{def:enr_notions}, one obtains the corresponding ordinary notion (for locally small categories) by taking $\V := \Set$, and in this case we often omit the explicit indication of $\V$.

$\V$-limits coincide with the \textit{conical limits} of \cite{Ke:Ba}.  Note that $\V$-orthogonality in $\eA$ implies orthogonality in $\A$, and every $\V$-mono (resp. $\V$-epi, $\V$-limit) in $\eA$ is a mono (resp. epi, limit) in $\A$.
\end{RemSub}

\begin{PropSub} \label{thm:lim_mono_epi_in_base_are_enr}
Any ordinary mono (resp. epi, limit, colimit) in $\aeV$ is a $\V$-mono (resp. $\V$-epi, $\V$-limit, $\V$-colimit).  Hence $\Mono_{\V}\aeV = \Mono\V$ and $\Epi_{\V}\aeV = \Epi\V$.
\end{PropSub}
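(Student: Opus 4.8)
The plan is to reduce all four assertions, and hence the two equalities, to two standard facts about adjunctions: a functor that admits a left adjoint preserves monomorphisms, and a right adjoint preserves every limit that exists in its domain. The required adjunctions come straight from the symmetric monoidal closed structure of $\V$. For each object $A$ of $\V$ the internal-hom functor $\aeV(A,-) : \V \rightarrow \V$ is right adjoint to $A \otimes -$. The functor $\aeV(-,A) : \V^{\op} \rightarrow \V$ also admits a left adjoint, but this point needs a small separate argument, since $\aeV^{\op}$ is in general \emph{not} the canonical $\V$-category of a closed category and so one cannot merely dualize the previous observation. Concretely, the functor $\V \rightarrow \V^{\op}$ sending $X$ to $\aeV(X,A)$ is left adjoint to $\aeV(-,A)$: by the closed structure and the symmetry of $\V$,
$$\V^{\op}\bigl(\aeV(X,A),B\bigr) = \V\bigl(B,\aeV(X,A)\bigr) \cong \V(B \otimes X, A) \cong \V(X \otimes B, A) \cong \V\bigl(X, \aeV(B,A)\bigr)\;,$$
naturally in $X$ and in $B$.

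Granting these, I would argue as follows. If $m$ is a monomorphism in $\V$, then $\aeV(A,m)$ is a monomorphism for every $A$ (as $\aeV(A,-)$ has a left adjoint), so $m$ is a $\V$-mono in $\aeV$. If $e$ is an epimorphism in $\V$, then $e$ is a monomorphism in $\V^{\op}$, so its image under the left-adjoint-admitting functor $\aeV(-,A) : \V^{\op} \rightarrow \V$ is a monomorphism of $\V$ for every $A$; unwinding \bref{def:enr_notions}, this says exactly that $e$ is a $\V$-epi in $\aeV$. For a limit $(L,\lambda)$ of $D : \J \rightarrow \aeV$ in $\V$, each right adjoint $\aeV(A,-)$ preserves it, so $(L,\lambda)$ is a $\V$-limit. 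Finally, a $\V$-colimit of $D : \J \rightarrow \aeV$ is by definition a $\V$-limit in $\aeV^{\op}$, and the ordinary functor $\aeV^{\op}(A,-) : \V^{\op} \rightarrow \V$ appearing in \bref{def:enr_notions}.7 is precisely $\aeV(-,A)$; since the latter admits a left adjoint it preserves limits of $\V^{\op}$, i.e.\ colimits of $\V$, so any ordinary colimit in $\aeV$ is a $\V$-colimit.

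For the two displayed equalities, the inclusions $\Mono_\V\aeV \subs \Mono\V$ and $\Epi_\V\aeV \subs \Epi\V$ hold by \bref{rem:enr_notions} (or by evaluating the defining conditions at the unit object $I$ and using $\aeV(I,-) \cong 1_\V$), while the reverse inclusions are the mono and epi cases just treated. I expect the only step that is not purely mechanical to be the construction of the left adjoint of $\aeV(-,A)$ above --- the temptation being to invoke a spurious self-duality of $\aeV$ --- with everything else a direct appeal to standard properties of adjoint functors.
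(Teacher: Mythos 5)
Your proof is correct and follows essentially the same route as the paper's: the paper likewise deduces the mono/limit cases from $\aeV(V,-)$ being right adjoint to $V \otimes -$, and the epi/colimit cases from the fact that $\aeV(-,V) : \V^{\op} \rightarrow \V$ is right adjoint to $\aeV(-,V) : \V \rightarrow \V^{\op}$ --- precisely the self-adjunction on the right that you verify explicitly. Your extra care in constructing that adjunction (rather than invoking a spurious self-duality of $\aeV$) is exactly the right instinct, and the remaining steps match the paper's argument.
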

\begin{proof}
Regarding limits and monos, each ordinary functor $\aeV(V,-):\V \rightarrow \V$ is right adjoint and hence preserves limits and monos.  Regarding epis and colimits, each functor $\aeV^\op(V,-) = \aeV(-,V) : \V^\op \rightarrow \V$ is right adjoint (to $\aeV(-,V)$) and hence sends monos (resp. limits) in $\V^\op$ (i.e. epis, resp. colimits, in $\V$) to monos (resp. limits).
\end{proof}

\begin{PropSub} \label{thm:crit_factn_sys}
Let $\E$, $\M$ be classes of morphisms in a $\V$-category $\eA$.  Suppose that (i) each of $\E$ and $\M$ is closed under composition and contains all isomorphisms, (ii) $\E \subs \M^{\uparrow_\V}$, and (iii) $(\E,\M)$-factorizations exist.  Then $(\E,\M)$ is a $\V$-factorization system on $\eA$.
\end{PropSub}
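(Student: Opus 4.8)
The plan is to carry over the classical recognition criterion for (pre)factorization systems---in the style of Freyd and Kelly---to the $\V$-enriched setting. By \bref{def:enr_notions}, a $\V$-factorization system is nothing but a $\V$-prefactorization system for which $(\E,\M)$-factorizations exist, and the latter condition is precisely hypothesis (iii); so it suffices to show that $(\E,\M)$ is a $\V$-prefactorization system, i.e. that $\E^{\downarrow_\V} = \M$ and $\M^{\uparrow_\V} = \E$. Hypothesis (ii) unwinds to the single assertion that $e \downarrow_\V m$ for all $e \in \E$ and $m \in \M$, which is equivalent to each of the inclusions $\M \subs \E^{\downarrow_\V}$ and $\E \subs \M^{\uparrow_\V}$. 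Hence the whole content of the proposition lies in the two reverse inclusions $\E^{\downarrow_\V} \subs \M$ and $\M^{\uparrow_\V} \subs \E$.

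A guiding observation is that, by \bref{rem:enr_notions}, $\V$-orthogonality in $\eA$ entails ordinary orthogonality in the underlying category $\A$; consequently every diagonal fill-in argument below can be performed entirely in $\A$, and no ``genuinely enriched'' computation is needed---the enriched flavour of the statement has already been absorbed into the definitions of $\E^{\downarrow_\V}$, $\M^{\uparrow_\V}$ and of ``$\V$-factorization system''. To prove $\E^{\downarrow_\V} \subs \M$, take $m \in \E^{\downarrow_\V}$ and, using (iii), factor it as $m = m' \circ e'$ with $e' \in \E$ and $m' \in \M$. From $e' \in \E$ we get $e' \downarrow_\V m$, and from $e' \in \E$, $m' \in \M$ together with (ii) we get $e' \downarrow_\V m'$; by \bref{rem:enr_notions} both hold as ordinary orthogonality. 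Applying $e' \downarrow m$ to the commuting square with top edge $1$, left edge $e'$, right edge $m$ and bottom edge $m'$ yields a morphism $d$ with $d \circ e' = 1$ and $m \circ d = m'$; then applying $e' \downarrow m'$ to the square with top and left edges $e'$ and right and bottom edges $m'$, whose diagonal admits both $1$ and $e' \circ d$ as solutions, and invoking the \emph{uniqueness} of the fill-in, forces $e' \circ d = 1$. Thus $e'$ is an isomorphism, and since $\M$ contains all isomorphisms and is closed under composition (hypothesis (i)), $m = m' \circ e' \in \M$.

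The inclusion $\M^{\uparrow_\V} \subs \E$ follows by the dual argument: one checks that the pair $(\M,\E)$ satisfies hypotheses (i)--(iii) relative to $\eA^\op$---for which one needs that $\V$-orthogonality, composition of morphisms, isomorphisms, and the existence of factorizations all dualize in the expected way---and then re-reads the previous paragraph in $\eA^\op$. Alternatively, and perhaps more cheaply, one repeats the chase directly: given $e \in \M^{\uparrow_\V}$, factor $e = m' \circ e'$ with $e' \in \E$, $m' \in \M$; use $e \downarrow m'$ to produce a morphism $d$ with $m' \circ d = 1$ (a section of $m'$), and then use $e' \downarrow m'$ (from (ii)) together with uniqueness of diagonals to conclude $d \circ m' = 1$, so that $m'$ is an isomorphism and $e = m' \circ e' \in \E$. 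With both reverse inclusions in hand, $(\E,\M)$ is a $\V$-prefactorization system, and hypothesis (iii) upgrades it to a $\V$-factorization system.

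I do not anticipate a serious obstacle here; the argument is essentially routine, and the one point demanding attention is bookkeeping. In proving each reverse inclusion, hypothesis (ii) must be used \emph{twice over}: once to supply the ``easy'' half of the prefactorization-system equalities, and once more---crucially---in the form $e' \downarrow m'$ inside the chase, since it is exactly this second use, coupled with the uniqueness clause of orthogonality, that pins down the offending map as an isomorphism. One must also keep distinct the two orthogonality relations $e' \downarrow m$ and $e' \downarrow m'$ that both enter the proof of a single inclusion, and, if one follows the dual route, verify carefully that passage to $\eA^\op$ really does preserve hypotheses (i)--(iii) under the conventions for $\V$-orthogonality fixed in \bref{def:enr_notions}.
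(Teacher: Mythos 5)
Your proof is correct. It rests on the same underlying idea as the paper's: hypothesis (ii) is symmetric, so it already gives $\M \subs \E^{\downarrow_\V}$ and $\E \subs \M^{\uparrow_\V}$, and since $\V$-orthogonality implies ordinary orthogonality \pbref{rem:enr_notions}, everything reduces to the classical recognition criterion of Freyd and Kelly. The difference is one of packaging: the paper simply cites \cite{FrKe} 2.2.2 to conclude that $(\E,\M)$ is an \emph{ordinary} factorization system on $\A$, hence $\E = \M^{\uparrow}$ and $\M = \E^{\downarrow}$, and then sandwiches, $\E \subs \M^{\uparrow_\V} \subs \M^{\uparrow} = \E$ (and dually), to upgrade these to the enriched equalities. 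You instead re-derive the content of that citation by an explicit diagonal chase (factor $m \in \E^{\downarrow_\V}$ as $m'e'$, use $e' \downarrow m$ and then the uniqueness clause of $e' \downarrow m'$ to force $e'$ to be invertible), obtaining $\E^{\downarrow_\V} \subs \M$ and $\M^{\uparrow_\V} \subs \E$ directly without passing through the ordinary classes $\E^{\downarrow}$, $\M^{\uparrow}$. Your version is self-contained and in fact proves the slightly stronger ordinary statement along the way (the chase only ever uses ordinary orthogonality); the paper's version is shorter and makes the logical dependence on the classical result explicit. Both chases you give (and the uniqueness bookkeeping you flag) check out.
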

\begin{proof}
We have
\begin{equation}\label{eqn:orth_incls}\E \subs \M^{\uparrow_\V} \subs \M^\uparrow\;,\;\;\;\;\M \subs \E^{\downarrow_\V}\subs \E^\downarrow\;.\end{equation}  
Hence by \cite{FrKe} 2.2 (and 2.2.2 in particular), $(\E,\M)$ is an ordinary factorization system on $\A$, so $\E = \M^\uparrow$ and $\M = \E^\downarrow$ and hence (by \eqref{eqn:orth_incls}) $\E = \M^{\uparrow_\V}$ and $\M = \E^{\downarrow_\V}$.
\end{proof}

\begin{CorSub} \label{thm:vfactn_sys_is_factn_sys}
Every $\V$-factorization system $(\E,\M)$ on $\eA$ is an ordinary factorization system on $\A$.
\end{CorSub}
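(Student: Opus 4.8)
The plan is to derive the statement from \bref{thm:crit_factn_sys} with its base category specialized to $\Set$. By \bref{rem:enr_notions}, $\Set$ is a legitimate choice of locally small symmetric monoidal closed category for the theory of \bref{sec:enr_orth_fwc}, and for that choice every $\V$-enriched notion introduced there reduces to the corresponding ordinary notion; in particular a $\Set$-factorization system on the (locally small) underlying category $\A$ is precisely an ordinary factorization system on $\A$. Hence it suffices to verify that $(\E,\M)$, regarded as a pair of classes of morphisms of $\A$, satisfies hypotheses (i)--(iii) of \bref{thm:crit_factn_sys} with $\V := \Set$.

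For hypothesis (i) I would observe that for \emph{any} $\V$-prefactorization system $(\E,\M)$ the classes $\E = \M^{\uparrow_\V}$ and $\M = \E^{\downarrow_\V}$ are closed under composition and contain every isomorphism: if $m$ (resp.\ $e$) is invertible then the horizontal (resp.\ vertical) edges of the square \eqref{eqn:orth_pb} are invertible and the square is therefore a pullback, so $e \downarrow_\V m$ holds for every $e$ (resp.\ every $m$); and closure under composition follows from the pasting law for pullbacks applied to the evident decomposition of the square \eqref{eqn:orth_pb} attached to a composite into the two squares attached to its factors, glued along a common edge. These are the standard facts about orthogonality classes, and the arguments are insensitive to the base category, so they hold equally with $\V := \Set$. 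Hypothesis (ii) with $\V := \Set$ reads $\E \subs \M^\uparrow$ for ordinary orthogonality, and this holds because $\E = \M^{\uparrow_\V}$ and, by \bref{rem:enr_notions}, $\V$-orthogonality in $\eA$ implies ordinary orthogonality in $\A$. Hypothesis (iii), the existence of $(\E,\M)$-factorizations, is part of the assumption that $(\E,\M)$ is a $\V$-factorization system, and it is literally the same condition whether read in $\eA$ or in $\A$. Invoking \bref{thm:crit_factn_sys} with $\V := \Set$ then yields the claim.

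The one point that is not mere formality is that one cannot transport the defining equalities $\E^{\downarrow_\V} = \M$ and $\M^{\uparrow_\V} = \E$ directly to their ordinary counterparts, since ordinary orthogonality does \emph{not} imply $\V$-orthogonality; the step that upgrades the one-sided inclusions $\E \subs \M^\uparrow$, $\M \subs \E^\downarrow$ (together with the existence of factorizations and the closure conditions) to the full ordinary prefactorization-system identities is the Freyd--Kelly argument already packaged in \bref{thm:crit_factn_sys} (it could alternatively be read off from the proof of that proposition, which in fact establishes the ordinary factorization system en route). I expect the remainder to be entirely routine.
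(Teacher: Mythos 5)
Your proposal is correct and follows essentially the same route as the paper, whose entire proof is to note $\E \subseteq \M^{\uparrow_\V} \subseteq \M^{\uparrow}$ and invoke \bref{thm:crit_factn_sys} for the ordinary category $\A$. You additionally spell out the verification of hypothesis (i) (closure under composition and isomorphisms via the standard pullback-pasting argument), which the paper leaves implicit; this is a harmless and correct elaboration rather than a different approach.
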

\begin{proof}
Since $\E \subs \M^{\uparrow_\V} \subs \M^\uparrow$, we may invoke \bref{thm:crit_factn_sys} with regard to the ordinary category $\A$.
\end{proof}

A statement of the following proposition appears in an entry on the collaborative web site \textit{Nlab}, at \href{http://ncatlab.org/nlab/revision/enriched+factorization+system/2}{http://ncatlab.org/nlab/revision/enriched+factorization+system/2}:
\begin{PropSub} \label{thm:enr_orth_from_ord}
Let $\E$ be a class of morphisms in a $\V$-category $\eA$.  Suppose that $\eA$ is tensored, and suppose that for each object $V$ in $\V$, $\E$ is stable under the application of $V \otimes (-):\eA \rightarrow \eA$.  Then $\E^{\downarrow_\V} = \E^{\downarrow}$.
\end{PropSub}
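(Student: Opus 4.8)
The plan is to derive both inclusions from a single observation: for every object $V$ of $\V$, applying the representable functor $\V(V,-):\V \rightarrow \Set$ to the commuting square \eqref{eqn:orth_pb} yields, up to canonical isomorphism, the \emph{ordinary} orthogonality square witnessing $(V \otimes e) \downarrow m$ in $\A$.  Granting this, recall that a commuting square in $\V$ is a pullback precisely when its image under $\V(V,-)$ is a pullback in $\Set$ for every object $V$ of $\V$ (using local smallness of $\V$); hence \eqref{eqn:orth_pb} is a pullback in $\V$ if and only if $(V \otimes e) \downarrow m$ in $\A$ for all $V \in \V$.  Taking $V = I$ and using $I \otimes e \cong e$ recovers the implication $e \downarrow_\V m \Rightarrow e \downarrow m$, hence $\E^{\downarrow_\V} \subs \E^\downarrow$ (as already observed in \bref{rem:enr_notions}); conversely, if $m \in \E^\downarrow$ and $\E$ is stable under each $V \otimes (-)$, then $V \otimes e \in \E$ for every $V$, so each of the transported squares is a pullback in $\Set$, whence $e \downarrow_\V m$, and therefore $\E^\downarrow \subs \E^{\downarrow_\V}$.

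Thus the task reduces to justifying the observation.  Because $\eA$ is tensored, the tensor adjunction supplies a natural isomorphism $\aeV(V,\eA(A,B)) \cong \eA(V \otimes A, B)$ in $\V$; applying the underlying-set functor $\V(I,-)$ gives a bijection $\V(V,\eA(A,B)) \cong \A(V \otimes A, B)$, natural in $V$, $A$, and $B$.  First I would record how this bijection transports the two kinds of map occurring in \eqref{eqn:orth_pb}: naturality in the codomain variable $B$ identifies post-composition $\eA(A,m)$ with post-composition $\A(V \otimes A, m)$, while naturality in the domain variable $A$ (using functoriality of $V \otimes (-)$) identifies pre-composition $\eA(e,B)$ with pre-composition $\A(V \otimes e, B)$.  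Chasing these identifications around \eqref{eqn:orth_pb} then exhibits its image under $\V(V,-)$ as the square
$$
\xymatrix@R=2ex {
\A(V \otimes A_2, B_1) \ar[rr]^{\A(V \otimes A_2, m)} \ar[d]_{\A(V \otimes e, B_1)} & & \A(V \otimes A_2, B_2) \ar[d]^{\A(V \otimes e, B_2)} \\
\A(V \otimes A_1, B_1) \ar[rr]_{\A(V \otimes A_1, m)}                                & & \A(V \otimes A_1, B_2)
}
$$
which is a pullback in $\Set$ exactly when $(V \otimes e) \downarrow m$ in $\A$, as wanted.

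I expect the only delicate point to be the bookkeeping in the second paragraph: one must verify that the (unnamed) vertical maps of \eqref{eqn:orth_pb} transport to pre-composition with $V \otimes e$ rather than to something mediated by the symmetry or the coherence constraints of $\V$, and that the bijection $\V(V,\eA(-,-)) \cong \A(V \otimes (-),-)$ is natural simultaneously in all its arguments, so that the entire square transports as a commuting square.  This is a routine if slightly fiddly diagram chase with the tensor adjunction and the monoidal coherence of $\V$; no essential obstacle is anticipated, and tensoredness of $\eA$ together with stability of $\E$ under the functors $V \otimes (-)$ are the only hypotheses used.
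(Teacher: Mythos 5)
Your proposal is correct and follows essentially the same route as the paper's proof: apply each representable $\V(V,-)$ to the square \eqref{eqn:orth_pb}, use the tensor adjunction isomorphism $\V(V,\eA(A,B)) \cong \A(V \otimes A,B)$ (natural in $A$ and $B$) to identify the resulting square with the ordinary orthogonality square for $V \otimes e$ against $m$, and conclude via stability of $\E$ under $V \otimes (-)$ together with the fact that the representables jointly detect pullbacks. The only cosmetic difference is that you also rederive the easy inclusion $\E^{\downarrow_\V} \subseteq \E^{\downarrow}$ from the same observation at $V = I$, where the paper simply cites that enriched orthogonality implies ordinary.
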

\begin{proof}
Enriched orthogonality implies ordinary, so it suffices to show that $\E^\downarrow \subs \E^{\downarrow_\V}$.  Letting $m:B_1 \rightarrow B_2$ lie in $\E^\downarrow$ and $e:A_1 \rightarrow A_2$ lie in $\E$, we must show that $e \downarrow_\V m$.  It suffices to show that each functor $\V(V,-):\V \rightarrow \Set$ ($V \in \V$) sends the square \eqref{eqn:orth_pb} to a pullback square in $\Set$.  Since $\eA$ is tensored, we have in particular that
$$\V(V,\eA(A,B)) \cong \A(V \otimes A,B)$$
naturally in $A,B \in \A$.  The diagram of sets obtained by applying $\V(V,-)$ to the square \eqref{eqn:orth_pb} is therefore isomorphic to the following diagram
$$
\xymatrix@R=2ex {
\A(V \otimes A_2,B_1) \ar[rr]^{\A(V \otimes A_2,m)} \ar[d]_{\A(V \otimes e,B_1)}  && \A(V \otimes A_2,B_2) \ar[d]^{\A(V \otimes e,B_2)} \\
\A(V \otimes A_1,B_1)  \ar[rr]^{\A(V \otimes A_1,m)}                     && {\A(V \otimes A_1,B_2)\;,}
}
$$
which is a pullback in $\Set$ since $V \otimes e \in \E$ and hence $V \otimes e \downarrow m$.
\end{proof}

\begin{PropSub} \label{thm:ord_str_mono_in_base_is_enriched}
$\StrMono_{\V}\aeV = \StrMono\V$.
\end{PropSub}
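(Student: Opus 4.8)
The plan is to unwind both sides of the claimed equality $\StrMono_\V\aeV = \StrMono\V$ using the definition $\StrMono_\V\eA = (\Epi_\V\eA)^{\downarrow_\V} \cap \Mono_\V\eA$ and the two facts already established in this section: first, by \bref{thm:lim_mono_epi_in_base_are_enr}, we have $\Mono_\V\aeV = \Mono\V$ and $\Epi_\V\aeV = \Epi\V$, so the ``$\Mono$'' factor and the class $\E := \Epi_\V\aeV$ appearing in the orthogonality factor already agree with their ordinary counterparts; second, \bref{thm:enr_orth_from_ord} reduces enriched orthogonality $\E^{\downarrow_\V}$ to ordinary orthogonality $\E^\downarrow$ provided $\eA$ is tensored and $\E$ is stable under each $V \otimes (-)$. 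Thus the whole statement comes down to verifying the hypotheses of \bref{thm:enr_orth_from_ord} for $\eA = \aeV$ and $\E = \Epi\V$.

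First I would note that $\aeV$ is tensored: for a symmetric monoidal closed category $\V$, the tensor $V \otimes A$ in the $\V$-category $\aeV$ is just the monoidal product, since $\aeV(V \otimes A, B) \cong \aeV(A, \aeV(V,B))$ is exactly the adjunction defining the internal hom (here using symmetry to identify $V \otimes A$ with $A \otimes V$). Next I would check that $\E = \Epi\V$ is stable under $V \otimes (-)$. Since $V \otimes (-): \V \rightarrow \V$ is left adjoint (to $\aeV(V,-)$), it preserves colimits, and epimorphisms can be characterized as those morphisms $e$ whose pushout-corner square (the pushout of $e$ along itself) has both legs invertible, i.e. as a colimit-type condition; hence a left adjoint preserves epis. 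Alternatively, and more directly, one observes that $\aeV(V,-)$ preserves monos (being a right adjoint), so its left adjoint $V \otimes (-)$ — wait, that gives preservation of epis by the \emph{right} adjoint $\aeV(V,-)$ on the opposite category; cleanest is: $V\otimes(-)$ preserves colimits, epis are the morphisms $e$ for which the pair $(\mathrm{id},\mathrm{id})$ is a pushout cocone coequalizing $e$ with itself — I would simply cite that left adjoints preserve epimorphisms.

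Having verified both hypotheses, \bref{thm:enr_orth_from_ord} gives $(\Epi\V)^{\downarrow_\V} = (\Epi\V)^{\downarrow}$ in $\aeV$. Combining with $\Epi_\V\aeV = \Epi\V$ and $\Mono_\V\aeV = \Mono\V$ from \bref{thm:lim_mono_epi_in_base_are_enr}, I get
$$\StrMono_\V\aeV = (\Epi_\V\aeV)^{\downarrow_\V} \cap \Mono_\V\aeV = (\Epi\V)^\downarrow \cap \Mono\V = \StrMono\V,$$
which is the assertion. The only mildly delicate point — the ``main obstacle,'' such as it is — is the epi-stability of $V\otimes(-)$, and even that is routine once one recalls that $V\otimes(-)$ is a left adjoint; everything else is bookkeeping with the definitions and the two cited propositions.
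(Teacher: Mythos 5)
Your proposal is correct and follows essentially the same route as the paper: verify the hypotheses of \bref{thm:enr_orth_from_ord} for $\E = \Epi\V$ (tensors in $\aeV$ given by the monoidal product, epis preserved by the left adjoints $V \otimes (-)$), then combine with \bref{thm:lim_mono_epi_in_base_are_enr}. The only blemish is the meandering middle paragraph; the clean justification is simply that $V \otimes (-)$ is a left adjoint and left adjoints preserve epimorphisms, which is exactly what the paper invokes.
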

\begin{proof}
Since epis in $\V$ are preserved by each left adjoint functor $V \otimes (-):\V \rightarrow \V$, the class $\E := \Epi\V$ satisfies the hypotheses of \bref{thm:enr_orth_from_ord}, and we compute (also using \bref{thm:lim_mono_epi_in_base_are_enr}) that
$$(\Epi_\V\aeV)^{\downarrow_\V} = (\Epi\V)^{\downarrow_\V} = (\Epi\V)^{\downarrow}$$
and hence $\StrMono_{\V}\aeV = (\Epi_\V\aeV)^{\downarrow_\V} \cap \Mono_\V\aeV = (\Epi\V)^{\downarrow} \cap \Mono\V = \StrMono\V$.
\end{proof}

The following notion is a $\V$-enriched generalization of the similarly named notion in \cite{CHK}:
\begin{DefSub} \label{def:enr_fwc}
A $\V$-category $\eA$ is \textit{$\V$-finitely-well-complete ($\V$-f.w.c.)} if
\begin{enumerate}
\item $\eA$ has all finite $\V$-limits, and
\item for every (class-indexed) family of $\V$-strong-monos $(M_i \rightarrowtail A)$ in $\eA$ with common codomain, there is a $\V$-fiber-product $M \rightarrow A$ that is again a $\V$-strong-mono.
\end{enumerate}
\end{DefSub}
\begin{RemSub} \label{rem:fwc}
In the case that $\V = \Set$, we say that the ordinary category $\A$ is \textit{finitely well-complete}.  Under the assumption of condition $1.$ we have  by \cite{CHK}, pg. 292, that $(\Epi\A,\StrMono\A)$ is a prefactorization system on $\A$, so by \cite{FrKe}, 2.1.1, $\StrMono\A$ is closed under fiber products in $\A$.  Hence in this case we may replace 2. by
\begin{enumerate}
\item[$2'$.] every (class-indexed) family of strong monos in $\A$ has a fiber product in $\A$.
\end{enumerate}
\end{RemSub}

\begin{PropSub} \label{thm:base_fwc}
Suppose $\V$ is finitely well-complete.  Then $\aeV$ is $\V$-finitely-well-complete, and $(\Epi_\V\aeV,\StrMono_\V\aeV) = (\Epi\V,\StrMono\V)$ is a $\V$-proper $\V$-factor\-ization-system on $\aeV$.
\end{PropSub}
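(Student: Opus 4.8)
The plan is to reduce both assertions to ordinary facts about $\V$, by means of the base-change comparisons already established in this section, and then to invoke \bref{thm:crit_factn_sys}. Since $\V$ is finitely well-complete it has in particular all finite limits, so \bref{rem:fwc} applies to $\V$: the pair $(\Epi\V,\StrMono\V)$ is an ordinary prefactorization system on $\V$, whence $\StrMono\V = (\Epi\V)^\downarrow$ is closed under composition, contains all isomorphisms, and is closed under arbitrary (class-indexed) fiber products in $\V$; moreover condition $2'$ of \bref{rem:fwc} holds for $\V$. Throughout I would use the identifications $\Epi_\V\aeV = \Epi\V$ and $\Mono_\V\aeV = \Mono\V$ from \bref{thm:lim_mono_epi_in_base_are_enr}, together with $\StrMono_\V\aeV = \StrMono\V$ from \bref{thm:ord_str_mono_in_base_is_enriched}.

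First I would check that $\aeV$ is $\V$-finitely-well-complete. For condition 1 of \bref{def:enr_fwc}: $\V$ has all finite ordinary limits, and by \bref{thm:lim_mono_epi_in_base_are_enr} every ordinary limit in $\aeV$ is a $\V$-limit, so $\aeV$ has all finite $\V$-limits. For condition 2: given a class-indexed family of $\V$-strong-monos $(M_i \rightarrowtail A)$ in $\aeV$, these are ordinary strong monos in $\V$ by \bref{thm:ord_str_mono_in_base_is_enriched}, so by condition $2'$ of \bref{rem:fwc} they admit a fiber product $M \to A$ in $\V$, which again lies in $\StrMono\V$ since that class is closed under fiber products. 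By \bref{thm:lim_mono_epi_in_base_are_enr} this ordinary fiber product is a $\V$-fiber-product in $\aeV$, and by \bref{thm:ord_str_mono_in_base_is_enriched} the monomorphism $M \to A$ is a $\V$-strong-mono --- which is precisely what condition 2 demands.

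Next I would show that $(\Epi\V,\StrMono\V) = (\Epi_\V\aeV,\StrMono_\V\aeV)$ is a $\V$-factorization system on $\aeV$ by verifying the three hypotheses of \bref{thm:crit_factn_sys} with $\E := \Epi\V$ and $\M := \StrMono\V$. Hypothesis (i) is immediate from the opening paragraph: $\Epi\V$ is plainly closed under composition and contains all isomorphisms, and $\StrMono\V$ has these properties as the right class of a prefactorization system. For hypothesis (ii), that $\E \subs \M^{\uparrow_\V}$: the class $\Epi\V$ satisfies the hypotheses of \bref{thm:enr_orth_from_ord}, since the $\V$-category $\aeV$ is tensored and $V \otimes (-):\aeV \to \aeV$ preserves epimorphisms (being a left adjoint), so $(\Epi\V)^{\downarrow_\V} = (\Epi\V)^\downarrow$ --- exactly the computation carried out in the proof of \bref{thm:ord_str_mono_in_base_is_enriched} --- and combining this with $\StrMono\V = (\Epi\V)^\downarrow$ (from the opening paragraph) yields $\StrMono\V = (\Epi\V)^{\downarrow_\V}$, which says precisely that every member of $\M$ is $\V$-orthogonal to every member of $\E$. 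For hypothesis (iii), I must produce, for each morphism $f:A \to B$ of $\V$, a factorization $f = m e$ with $e \in \Epi\V$ and $m \in \StrMono\V$; this may either be quoted from \cite{CHK} or obtained directly by letting $m:M \rightarrowtail B$ be the intersection (class-indexed fiber product) of all strong subobjects of $B$ through which $f$ factors --- which exists and is a strong mono by the opening paragraph, and through which $f$ factors by the universal property of the fiber product, say via $e:A \to M$ --- and then checking, by an equalizer-and-minimality argument using finite limits, that $e$ is an epimorphism. With (i)--(iii) in hand, \bref{thm:crit_factn_sys} gives that $(\Epi\V,\StrMono\V)$ is a $\V$-factorization system on $\aeV$, and it is $\V$-proper because $\E = \Epi_\V\aeV$ while $\M = \StrMono_\V\aeV = (\Epi_\V\aeV)^{\downarrow_\V} \cap \Mono_\V\aeV \subs \Mono_\V\aeV$.

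The step I expect to be the main obstacle is hypothesis (iii): establishing the $(\Epi\V,\StrMono\V)$-factorization is where full finite well-completeness --- rather than mere finite completeness --- is genuinely used, and the one nonroutine point in the direct argument is verifying that the comparison $e$ into the minimal strong subobject of $B$ through which $f$ factors is epic (if two maps out of $M$ agreed after precomposition with $e$, their equalizer would be a strictly smaller strong subobject of $B$ still admitting a factorization of $f$, contradicting minimality of $M$). Everything else is routine bookkeeping built on \bref{thm:lim_mono_epi_in_base_are_enr}, \bref{thm:ord_str_mono_in_base_is_enriched}, and \bref{thm:enr_orth_from_ord}.
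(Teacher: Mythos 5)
Your proposal is correct and follows essentially the same route as the paper: identify the enriched notions in $\aeV$ with the ordinary ones in $\V$ via \bref{thm:lim_mono_epi_in_base_are_enr} and \bref{thm:ord_str_mono_in_base_is_enriched} to get $\V$-finite-well-completeness and the equality of the two pairs, quote \cite{CHK} for the ordinary $(\Epi\V,\StrMono\V)$-factorization system, and then upgrade the orthogonality to the enriched setting. The only cosmetic difference is that you package the last step through \bref{thm:crit_factn_sys} and re-run the \bref{thm:enr_orth_from_ord} computation, whereas the paper gets $\M \subs \E^{\downarrow_\V}$ directly from the definition of $\StrMono_\V\aeV$ and deduces both prefactorization equalities from the ordinary prefactorization system.
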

\begin{proof}
Since finite $\V$-limits, $\V$-fiber-products, $\V$-epis, and $\V$-strong-monos in $\aeV$ are the same as the corresponding ordinary notions in $\V$ (by \bref{thm:lim_mono_epi_in_base_are_enr}, \bref{thm:ord_str_mono_in_base_is_enriched}), we find that $\aeV$ is $\V$-f.w.c. and the two pairs given are equal.  Also, by \cite{CHK}, 3.2, $(\E,\M) := (\Epi\V,\StrMono\V)$ is a factorization system on $\V$.  By the definition of $\StrMono_\V\aeV$, we have that $\M \subs \E^{\downarrow_\V}$, and we thus have also that $\E \subs \M^{\uparrow_\V}$.  Further,
$$\E^{\downarrow_\V} \subs \E^{\downarrow} = \M\;,\;\;\;\;\M^{\uparrow_\V} \subs \M^{\uparrow} = \E$$
since $(\E,\M)$ is a prefactorization system.
\end{proof}

\section{Enriched adjoint functor factorization} \label{sec:enr_adj_factn}

Let $\V$ be a symmetric monoidal closed category.  The following is the basic lemma that enables one to factorize a $\V$-adjunction through a reflective sub-$\V$-category:

\begin{LemSub} \label{thm:adj_factn_lemma}
Given a $\V$-adjunctions $\bS \nsststile{\epsilon}{\eta} \bT : \eC \rightarrow \eB$ and $\bK \nsststile{}{\rho} \bJ : \eB' \hookrightarrow \eB$, where $\bJ$ is the inclusion of a full sub-$\V$-category, let us suppose that the image of $\bT$ lies in $\eB'$.  Then there is a $\V$-adjunction $\bS' \nsststile{\epsilon'}{\eta'} \bT':\eC \rightarrow \eB'$ with $\bJ \bT' = \bT$, $\bS' = \bS \bJ$, $\bJ\eta' = \eta\bJ$, and $\epsilon' = \epsilon$.
\end{LemSub}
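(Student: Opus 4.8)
The plan is to construct the factored adjunction directly, mimicking the familiar ordinary-categorical argument but carried out at the level of $\V$-enriched data. First I would define $\bT' : \eC \to \eB'$ to be the $\V$-functor obtained by corestricting $\bT$ along the (full, replete) inclusion $\bJ$, which is legitimate precisely because of the hypothesis that the image of $\bT$ lies in $\eB'$; by construction $\bJ\bT' = \bT$ as $\V$-functors. Dually, I set $\bS' := \bS\bJ : \eB' \to \eC$. It then remains to exhibit unit and counit $\V$-natural transformations $\eta' : 1_{\eB'} \to \bT'\bS'$ and $\epsilon' : \bS'\bT' \to 1_{\eC}$ satisfying the triangle identities. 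The counit is immediate: since $\bS'\bT' = \bS\bJ\bT' = \bS\bT$, the original counit $\epsilon : \bS\bT \to 1_{\eC}$ serves verbatim, so I put $\epsilon' := \epsilon$.

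The unit requires slightly more care, since a priori $\eta : 1_{\eB} \to \bT\bS$ lives over $\eB$, not $\eB'$. The point is that for an object $B' \in \eB'$, the component $\eta_{\bJ B'} : \bJ B' \to \bT\bS\bJ B' = \bJ\bT'\bS'B'$ is a morphism between objects in the image of the fully faithful $\bJ$, hence — $\bJ$ being full and faithful, and the hom-objects of $\eB'$ being those of $\eB$ — determines a unique morphism $\eta'_{B'} : B' \to \bT'\bS'B'$ with $\bJ\eta'_{B'} = \eta_{\bJ B'}$; this is the content of the requirement $\bJ\eta' = \eta\bJ$. The $\V$-naturality of $\eta'$ follows from that of $\eta$ together with the faithfulness of $\bJ$ on hom-objects: the two legs of each $\V$-naturality square for $\eta'$ become equal after applying $\bJ$, hence were already equal. (Here one uses that $\bJ$ is the inclusion of a \emph{full} sub-$\V$-category, so $\eB'(B'_1,B'_2) = \eB(\bJ B'_1, \bJ B'_2)$ and the comparison maps are identities.)

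Finally I would verify the two triangle identities for $(\eta',\epsilon')$. Each is an equation between morphisms of $\eB'$ or of $\eC$; after whiskering with $\bJ$ (on the $\eB'$ side) these reduce, via the relations $\bJ\bT' = \bT$, $\bS' = \bS\bJ$, $\bJ\eta' = \eta\bJ$, $\epsilon' = \epsilon$, to the triangle identities for the original $\V$-adjunction $\bS \nsststile{\epsilon}{\eta} \bT$, which hold by hypothesis; faithfulness of $\bJ$ on hom-objects then yields the identities in $\eB'$, while the $\eC$-side identity holds on the nose since all the data there already coincide with those of the original adjunction. I do not expect any genuine obstacle here — the whole argument is a routine transport of structure along a full $\V$-embedding — but the one point that warrants explicit attention is the passage from an equation after applying $\bJ$ to the equation itself, which is exactly where the fullness and faithfulness of $\bJ$ on hom-objects (and hence the corresponding statement at the level of $\V$-natural transformations) is used.
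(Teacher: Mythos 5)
Your proposal is correct and coincides with the paper's own (much terser) argument: $\bT'$ is the corestriction of $\bT$, $\bS' = \bS\bJ$, $\epsilon' = \epsilon$, the components of $\eta'$ are literally those of $\eta$ (the hom-objects of the full sub-$\V$-category being those of $\eB$), and the triangle identities are inherited. The extra care you take over $\V$-naturality of $\eta'$ and the transport of the triangle identities along $\bJ$ is exactly the routine verification the paper leaves implicit.
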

\begin{proof}
$T'$ is just the corestriction of $T$, the components of $\eta'$ are just those of $\eta$, and the triangular equations are readily verified.
\end{proof}

\begin{DefSub} \label{def:orth_subcat_sigma}
\begin{enumerate}
\item For a morphism $f:A_1 \rightarrow A_2$ in a $\V$-category $\eA$ and an object $B$ in $\eA$, we say that $f$ is \textit{$\V$-orthogonal} to $B$, written $f \bot_\V B$, if $\eA(f,B):\eA(A_2,B) \rightarrow \eA(A_1,B)$ is an isomorphism in $\V$.  (If $\eA$ has a $\V$-terminal object $1$, then it is easy to show that $f \bot_\V B$ iff $f \downarrow_\V !_B$, where $!_B:B \rightarrow 1$.)
\item Given a class of morphisms $\sS$ in $\eA$, we let $\eA_\sS$ be the full sub-$\V$-category of $\eA$ determined by those objects of $\eA$ to which every morphism in $\sS$ is $\V$-orthogonal.
\item Given a functor $S:\C \rightarrow \D$, let $\Sigma_S$ denote the class of all morphisms in $\C$ inverted by $S$ (i.e. sent to an isomorphism in $\D$).
\end{enumerate}
\end{DefSub}

\begin{LemSub} \label{thm:orth_of_morphs_in_orth_subcat}
Let $\eA$ be a $\V$-category and $\sS$ a class of morphism in $\eA$.  Then for any morphisms $e:A_1 \rightarrow A_2$ in $\sS$ and $m:B_1 \rightarrow B_2$ in $\eA_{\sS}$, we have that $e \downarrow_\V m$ in $\eA$.
\end{LemSub}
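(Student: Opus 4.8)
The plan is to observe that the two hypotheses, taken together, force the two vertical edges of the square \eqref{eqn:orth_pb} to be isomorphisms in $\V$, so that the square is a pullback for purely formal reasons.

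First I would unpack the definition of $\eA_{\sS}$ from \bref{def:orth_subcat_sigma}.2: since both $B_1$ and $B_2$ lie in $\eA_{\sS}$, every morphism in $\sS$ is $\V$-orthogonal (in the sense of \bref{def:orth_subcat_sigma}.1) to each of $B_1$ and $B_2$; in particular the given $e : A_1 \rightarrow A_2$ satisfies $e \bot_\V B_1$ and $e \bot_\V B_2$. By the definition of $\bot_\V$, this says precisely that $\eA(e,B_1) : \eA(A_2,B_1) \rightarrow \eA(A_1,B_1)$ and $\eA(e,B_2) : \eA(A_2,B_2) \rightarrow \eA(A_1,B_2)$ are isomorphisms in $\V$. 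Matching these against the square \eqref{eqn:orth_pb} that defines $e \downarrow_\V m$, one sees that $\eA(e,B_1)$ and $\eA(e,B_2)$ are exactly the left and right vertical edges of that commutative square.

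It then remains to invoke the elementary fact that a commutative square in any category, one pair of whose opposite edges consists of isomorphisms, is automatically a pullback: given a competing cone one obtains the mediating morphism by composing with the inverse of the relevant isomorphism, and commutativity makes the triangles commute while uniqueness is forced by the isomorphism being monic. Applying this to the square \eqref{eqn:orth_pb} in $\V$ yields that it is a pullback, i.e. $e \downarrow_\V m$, as required.

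There is essentially no obstacle; the only point requiring minor care is bookkeeping of arrow directions, namely checking that the morphisms $\eA(e,B_i)$ shown to be invertible are indeed the vertical (rather than horizontal) edges of the square in the convention of \bref{def:enr_notions}.3, which they are.
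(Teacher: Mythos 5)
Your proof is correct and is essentially identical to the paper's: both observe that $e \bot_\V B_1$ and $e \bot_\V B_2$ make the two vertical edges $\eA(e,B_i)$ of the square \eqref{eqn:orth_pb} isomorphisms, whence the square is a pullback. No issues.
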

\begin{proof}
Since $e \bot_\V B_1$ and $e \bot_\V B_2$, the left and right sides of the commutative square \eqref{eqn:orth_pb} are isomorphisms, so the square is a pullback.
\end{proof}

We employ results of Day \cite{Day:AdjFactn} in proving the following:

\begin{ThmSub} \label{thm:adj_factn}
Let $\xymatrix {\eB \ar@/_0.5pc/[rr]_\bS^(0.4){\eta}^(0.6){\epsilon}^{\top} & & \eC \ar@/_0.5pc/[ll]_\bT}$ be a $\V$-adjunction, and assume that $\V$ is locally small.  Suppose that $(\E,\M) := (\Epi_\V\eB,\StrMono_\V\eB)$ is a $\V$-proper $\V$-factorization-system on $\eB$.  Suppose that $\eB$ is cotensored and has $\V$-equalizers, $\V$-pullbacks of $\M$-morphisms, and arbitrary $\V$-fiber-products of $\M$-morphisms.  Then there are $\V$-adjunctions
$$
\xymatrix {
\eB \ar@/_0.5pc/[rr]_{\bK}^(0.4){\rho}^(0.6){}^{\top} & & {\eB''} \ar@{_{(}->}@/_0.5pc/[ll]_{\bJ} \ar@/_0.5pc/[rr]_{\bS''}^(0.4){\eta''}^(0.6){}^{\top} & & {\eC} \ar@/_0.5pc/[ll]_{\bT''}
}
$$
with $\bJ\bT''  = \bT$, $\bS''  = \bS\bJ$, $\bJ$ a full sub-$\V$-category inclusion, and $\bS''$ conservative (i.e. $\bS'' $ reflects isomorphisms).  Further, each component of $\eta''$ lies in $\M$, and $\eB'' = \eB_{\Sigma_S}$.
\end{ThmSub}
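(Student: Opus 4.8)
The plan is to take $\eB'' := \eB_{\Sigma_S}$, the full sub-$\V$-category of $\eB$ consisting of the objects that are $\V$-orthogonal to every morphism inverted by $S$ \pbref{def:orth_subcat_sigma}, to obtain the left-hand reflection $\bK \dashv \bJ$ from Day's lemmas \cite{Day:AdjFactn}, and then to read off the right-hand adjunction and the remaining assertions by formal manipulation. First I would record the easy fact that $\bT$ corestricts to $\eB''$: for any $f \in \Sigma_S$ and any $D \in \eC$, the $\V$-natural adjunction isomorphism $\eB(f,\bT D) \cong \eC(\bS f,D)$ is an isomorphism in $\V$, because $\bS f = Sf$ is invertible; hence $f \bot_\V \bT D$, and since $f$ was an arbitrary member of $\Sigma_S$ this means $\bT D \in \eB_{\Sigma_S}$. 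Thus $\bT$ corestricts to a $\V$-functor $\bT'' : \eC \to \eB''$.

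The crux is the $\V$-reflection of $\eB$ onto $\eB_{\Sigma_S}$, and here I would invoke Day's machinery: the hypotheses it requires are precisely the ones assumed here --- $\eB$ cotensored, $(\E,\M) = (\Epi_\V\eB,\StrMono_\V\eB)$ a $\V$-proper $\V$-factorization system, and $\eB$ equipped with $\V$-equalizers, $\V$-pullbacks of $\M$-morphisms, and arbitrary $\V$-fiber-products of $\M$-morphisms. Day's reflector $\bK$ is built by transfinitely iterating the $(\E,\M)$-factorization $B \xrightarrow{e_B} Z_B \xrightarrow{m_B} \bT\bS B$ of the adjunction unit $\eta_B$ (taking the appropriate $\V$-colimits at limit ordinals and $\M$-intersections to stabilize), the underlying idea being that closing $B$ up under all such ``localizing'' maps yields an object that is $\V$-orthogonal to every morphism of $\Sigma_S$. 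The one non-formal input that makes this run is the observation that each $e_B$ already lies in $\Sigma_S$: applying $S$ to $\eta_B = m_B\,e_B$ and using the triangle identity $\epsilon_{\bS B}\cdot\bS\eta_B = 1$ shows that $S\eta_B$, and therefore its factor $Se_B$, is a split monomorphism; but $e_B$ is a $\V$-epi and hence an epimorphism \pbref{rem:enr_notions}, preserved as such by the left adjoint $S$, so $Se_B$ is also an epimorphism and is therefore an isomorphism, i.e.\ $e_B \in \Sigma_S$. It is this ``self-generation'' of $\Sigma_S$ by the units --- together with the $\M$-completeness --- that makes the transfinite construction converge, and I expect the verification of that convergence, along with the enriched bookkeeping (orthogonality as a $\V$-pullback condition; stability of $\E$ and $\M$ under the cotensors, colimits, and intersections involved), to be the main obstacle in a complete proof. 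The upshot is a $\V$-reflection $\bK \nsststile{}{\rho} \bJ : \eB_{\Sigma_S} \hookrightarrow \eB$, with $\bJ$ a full sub-$\V$-category inclusion by construction.

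Finally I would assemble. Since the image of $\bT$ lies in $\eB'' = \eB_{\Sigma_S}$, Lemma \bref{thm:adj_factn_lemma} applied to the $\V$-adjunctions $\bS\dashv\bT : \eC\to\eB$ and $\bK\dashv\bJ : \eB''\hookrightarrow\eB$ yields a $\V$-adjunction $\bS''\dashv\bT'' : \eC\to\eB''$ with $\bJ\bT'' = \bT$, $\bS'' = \bS\bJ$, and $\bJ\eta''_C = \eta_{\bJ C}$ for every object $C$. For conservativity of $\bS''$: if $g : C_1 \to C_2$ is a morphism of $\eB''$ with $Sg$ invertible, then $g \in \Sigma_S$, so $g \bot_\V C_1$ and $g \bot_\V C_2$; applying the underlying-set functor $\V(I,-)$ to the isomorphisms $\eB(g,C_i)$ and chasing $1_{C_1}$ and $1_{C_2}$ through the resulting bijections of hom-sets produces a two-sided inverse for $g$, so $g$ is an isomorphism. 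For the last claim: given $C \in \eB''$ we have $\bJ\eta''_C = \eta_{\bJ C}$, whose $(\E,\M)$-factorization is $\eta_{\bJ C} = m_{\bJ C}\,e_{\bJ C}$ with $e_{\bJ C} \in \Sigma_S$ (by the argument above); since $\bJ C \in \eB_{\Sigma_S}$ we get $e_{\bJ C} \bot_\V \bJ C$, so $e_{\bJ C}$ admits a retraction, and being a $\V$-epi (hence an epimorphism) with a retraction it is an isomorphism. Therefore $\eta''_C = \eta_{\bJ C} = m_{\bJ C}\,e_{\bJ C}$ lies in $\M$, since $\M$ contains the isomorphisms and is closed under composition.
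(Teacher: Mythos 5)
Your overall plan is sound and several of the peripheral verifications are correct and essentially match the paper's: the corestriction of $\bT$ to $\eB_{\Sigma_S}$, the conservativity of $\bS''$ via $\V$-orthogonality, the observation that the $\E$-part $e_B$ of the $(\E,\M)$-factorization of $\eta_B$ is inverted by $S$ (split mono after applying $S$ by the triangle identity, and epi since left adjoints preserve epis), and the deduction that $\eta''$ has components in $\M$. But the central claim of the theorem --- the existence of the $\V$-reflection $\bK \dashv \bJ$ of $\eB$ onto $\eB_{\Sigma_S}$ --- is not established. You appeal to ``Day's machinery'' described as a transfinite iteration of the factorization of the unit, and you yourself flag the convergence of that iteration and the enriched bookkeeping as the unresolved ``main obstacle.'' That is precisely the content of the theorem, so as it stands the proof has a genuine gap; moreover, the transfinite small-object-style construction you sketch is not what Day's cited results provide, so the citation does not close the gap either.

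What the paper actually does is decompose the reflection into two stages, each covered by a specific lemma of \cite{Day:AdjFactn}. First (Day 1.2) one reflects onto the full sub-$\V$-category $\eB'$ of objects $B$ with $\eta_B \in \M$; this needs only a \emph{single} $(\E,\M)$-factorization of $\eta_B$, with $\rho'_B := e_B$ as reflection unit (your observation that $e_B \in \Sigma_S$ is exactly what makes this work). Second, after corestricting the adjunction to $\eB'$ via Lemma \bref{thm:adj_factn_lemma} and checking that $\eB'$ inherits the factorization system, cotensors, and $\M$-completeness, one applies Day 2.3 to reflect onto $\eB'_{\Sigma_{S'}}$; this second step uses the arbitrary $\V$-fiber-products of $\M$-morphisms (intersections of $\M$-subobjects), not a transfinite colimit, and its hypotheses require the unit components to lie in $\M$ --- which is exactly what the first stage arranges and which fails for the original adjunction on all of $\eB$. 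Finally the identification $\eB'_{\Sigma_{S'}} = \eB_{\Sigma_S}$ is a separate two-way verification. So if you want to rescue your one-step approach, you must either prove the orthogonal-subcategory reflection from scratch under the stated completeness hypotheses, or reproduce this two-stage factorization; simply naming $\eB_{\Sigma_S}$ and asserting reflectivity is not enough.
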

\begin{proof}
By \bref{thm:vfactn_sys_is_factn_sys}, $(\E,\M)$ a proper (ordinary) factorization system on $\B$, so by (the duals of) \cite{FrKe} 2.1.1 and 2.1.3, $\M$ is closed under pullback and under fiber products and contains all equalizers.  Since every $\V$-limit is an ordinary limit, $\M$ is closed under $\V$-pullback and under $\V$-fiber products and contains all $\V$-equalizers.  Hence, in the terminology of \cite{Day:AdjFactn}, $\B$ is \textit{$\M$-complete}.

Let $\bJ':\eB' \hookrightarrow \eB$ be inclusion of the full sub-$\V$-category of $\eB$ consisting of those objects $B$ for which $\eta_B \in \M$.  It is shown in \cite{Day:AdjFactn}, 1.2, that there is a $\V$-adjunction $\bK' \nsststile{}{\rho'} \bJ'$ such that $\rho'$ is inverted by $\bS$.

Observe that for each $C \in \eC$, $TC$ lies in $\B'$, since $\eta_{TC}$ is a split mono and hence an equalizer and therefore lies in $\M$.  Therefore we may use \bref{thm:adj_factn_lemma} to obtain a $\V$-adjunction $\bS' \nsststile{\epsilon}{\eta'} \bT' : \eC \rightarrow \eB'$ with $\bJ' \bT' = \bT$, $\bS' = \bS \bJ'$ and $\bJ'\eta' = \eta\bJ'$.

Let $\E'$ and $\M'$ be the intersections of $\mor \B'$ with $\E$ and $\M$, respectively.  By \cite{Day:AdjFactn} 1.1, every $\M$-subobject of an object of $\eB'$ again lies in $\eB'$, so since $\eB$ has $(\E,\M)$-factorizations, $\eB'$ has $(\E',\M')$-factorizations.  It follows that $(\E',\M')$ satisfies the hypotheses of \bref{thm:crit_factn_sys} and hence is a $\V$-factorization-system on $\eB'$.  Further, since $\eB'$ is a $\V$-reflective sub-$\V$-category of the cotensored $\M$-complete $\V$-category $\eB$, we deduce also that $\eB'$ is cotensored and $\M'$-complete.

Therefore the $\V$-adjunction $\bS' \nsststile{\epsilon}{\eta'} \bT'$ satisfies the hypotheses of 2.3 of \cite{Day:AdjFactn}, whence we obtain a $\V$-adjunction $\bK'' \nsststile{}{\rho''} \bJ'' : \eB'' \hookrightarrow \eB'$, where $\bJ''$ is the inclusion of the full sub-$\V$-category $\eB'' := \eB'_{\Sigma_{S'}}$ of $\eB'$.

For each $C \in \eC$, $T' C$ lies in $\eB''$, since for each morphism $h:B_1' \rightarrow B_2'$ in $\Sigma_{S'}$, we have a commutative square
$$
\xymatrix@R=2ex {
{\eB'(B_2',T'C)} \ar[rr]^{\eB'(h,T'C)} \ar[d]^{\sim} & & {\eB'(B_1',T'C)} \ar[d]^{\sim} \\
{\eC(S'B_2',C)} \ar[rr]_{\eC(S'h,C)}                 & & {\eC(S'B_1',C)}
}
$$
whose right, left, and bottom sides are isomorphisms, so that the top side is an isomorphism.

Hence we may again apply \bref{thm:adj_factn_lemma} to obtain a $\V$-adjunction $\bS'' \nsststile{\epsilon}{\eta''} \bT'' : \eC \rightarrow \eB''$ with $\bJ'' \bT'' = \bT'$, $\bS'' = \bS' \bJ''$ and $\bJ''\eta'' = \eta'\bJ''$.

Composing the $\V$-adjunctions $\bK' \dashv \bJ'$ and $\bK'' \dashv \bJ''$, we obtain a $\V$-adjunction  $\bK \dashv \bJ : \eB'' \hookrightarrow \eB$, and one checks that $\bJ\bT'' = \bT$ and $\bS'' = \bS\bJ$ as needed.

We now show that $\eB'' = \eB_{\Sigma_S}$.  First suppose that $B \in \eB_{\Sigma_S}$.  Then since $S$ inverts $\rho'_B$ we have that $\rho'_B \bot_\V B$, and hence $\B(\rho'_B,B):\B(J'K'B,B) \rightarrow \B(B,B)$ is a bijection.  Hence $1_B$ lies in the image of this map, so $\rho'_B$ is a split mono and therefore lies in $\M$.  But in \cite{Day:AdjFactn}, 1.2, $\rho'_B$ is obtained as the first factor of an $(\E,\M)$-factorization $\eta_B = m \cdot \rho'_B$ of $\eta_B$, where $m \in \M$, whence $\eta_B \in \M$, showing that $B \in \eB'$.  Since $B \in \eB_{\Sigma_S}$, it now follows that $B \in \eB'_{\Sigma_{S'}} = \eB''$.  

Conversely, let $B'' \in \eB''$.  Suppose that $f:B_1 \rightarrow B_2$ lies in $\Sigma_S$.  We have a commutative square
\begin{equation}\label{eq:square_a}
\xymatrix@R=2ex {
{\eB(B_2,B'')} \ar[rr]^{\eB(f,B'')} \ar[d]^{\sim} & & {\eB(B_1,B'')} \ar[d]^{\sim} \\
{\eB'(K'B_2,B'')} \ar[rr]^{\eB'(K'f,B'')}        & & {\eB'(K'B_1,B'')}
}
\end{equation}
whose right and left sides are isos.  But $K'f \in \Sigma_{S'}$, since we have a commutative square 
$$
\xymatrix@R=2ex {
{SB_1}                   \ar[rr]^{Sf}_{\sim} \ar[d]^{\sim}_{S\rho'_{B_1}}    &    & {SB_2} \ar[d]_{\sim}^{S\rho'_{B_2}}\\
{SJ'K'B_1 = S'K'B_1} \ar[rr]_{SJ'K'f = S'K'f}         &    & {SJ'K'B_2 = S'K'B_2}
}
$$
whose left, right, and top sides are isos, whence $S'K'f$ is iso.  Hence since $B'' \in \eB'_{\Sigma_{S'}}$, the bottom side of \eqref{eq:square_a} is an iso, so the top side of \eqref{eq:square_a} is an iso, whence $f \bot_\V B''$.  Hence $B'' \in \eB_{\Sigma_S}$.

It now follows that $S''$ is conservative, since if a morphism $f:B_1'' \rightarrow B_2''$ in $\eB'' = \eB_{\Sigma_S}$ is such that $S''f$ is an isomorphism in $\eC$, then since $S''f$ is simply $Sf$ we have that $f \in \Sigma_S$, so by \bref{thm:orth_of_morphs_in_orth_subcat} we deduce that $f \downarrow_{\V} f$ in $\eB$, from which it follows that $f$ is an isomorphism in $\eB$ and hence in $\eB''$.
\end{proof}

\begin{RemSub} \label{rem:factn_inversion}
In \bref{thm:adj_factn}, $\bK$ inverts the same morphisms as $\bS$ (i.e., $\Sigma_{K} = \Sigma_{S}$), since $\bS  \cong \bS'' \bK$ and $\bS'' $ is conservative.
\end{RemSub}

\begin{CorSub} \label{thm:base_adj_fact}
Suppose that $\V$ is finitely well-complete and locally small.  Then any $\V$-adjunction $\xymatrix {\aeV \ar@/_0.5pc/[rr]_\bS^(0.4){}^(0.6){}^{\top} & & \eC \ar@/_0.5pc/[ll]_\bT}$ factors as in \bref{thm:adj_factn} (when we set $\eB := \aeV$).
\end{CorSub}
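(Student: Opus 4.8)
The plan is to verify that, taking $\eB := \aeV$, every hypothesis of Theorem \bref{thm:adj_factn} is satisfied, and then to invoke that theorem directly.

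First I would record that $\aeV$ is cotensored: for $V \in \V$ and $W \in \aeV$ the internal hom $\aeV(V,W)$ serves as a cotensor $[V,W]$, by the defining adjointness $\V(U, \aeV(V,W)) \cong \V(V, \aeV(U,W))$ natural in $U \in \V$. (Alternatively, this is precisely the instance of \bref{thm:assoc_enr_adj}.2 in which the symmetric monoidal adjunction of \bref{par:given_smcadj} is taken to be the identity adjunction on $\V$.)

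Next I would appeal to \bref{thm:base_fwc}: since $\V$ is finitely well-complete, $\aeV$ is $\V$-finitely-well-complete, and $(\E,\M) := (\Epi_\V\aeV, \StrMono_\V\aeV) = (\Epi\V, \StrMono\V)$ is a $\V$-proper $\V$-factorization-system on $\aeV$. Unpacking $\V$-finite-well-completeness (Definition \bref{def:enr_fwc}), $\aeV$ then has all finite $\V$-limits --- in particular $\V$-equalizers and $\V$-pullbacks, hence a fortiori $\V$-pullbacks of $\M$-morphisms --- and moreover every (class-indexed) family of $\V$-strong-monos with common codomain, that is to say of $\M$-morphisms, admits a $\V$-fiber-product. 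Together with the local smallness of $\V$ assumed in the hypothesis, this supplies exactly the standing data required by \bref{thm:adj_factn}.

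Finally, applying \bref{thm:adj_factn} to the given $\V$-adjunction $\bS \dashv \bT : \eC \rightarrow \aeV$ with $\eB := \aeV$ yields the asserted factorization. I do not anticipate any genuine obstacle here; the only point needing slight care is to confirm that the $\V$-limit-existence clauses of \bref{thm:adj_factn}, phrased specifically in terms of $\M$-morphisms, are subsumed by the blanket $\V$-finite-well-completeness provided by \bref{thm:base_fwc} --- which is immediate once one notes, again via \bref{thm:base_fwc}, that the class $\M$ there is exactly $\StrMono_\V\aeV$.
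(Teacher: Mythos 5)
Your proof is correct and follows exactly the paper's route: the paper's own proof is a one-line invocation of \bref{thm:base_fwc} to verify the hypotheses of \bref{thm:adj_factn}, and your write-up simply makes that verification explicit (cotensors via internal homs, the factorization system and limit conditions from $\V$-finite-well-completeness, local smallness by hypothesis).
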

\begin{proof}
An invocation of \bref{thm:base_fwc} shows that the hypotheses of \bref{thm:adj_factn} are satisfied.
\end{proof}

\vspace{1pc}
\begin{center}\normalsize{{P\textsc{art} II:  N\textsc{atural} D\textsc{istributions} \textsc{and} F\textsc{ubini}}}\end{center}

\section{The natural distribution monad and Fubini}

\begin{ParSub} \label{par:smc_adj}
Throughout the sections that follow, we will consider a given symmetric monoidal adjunction $F \nsststile{\epsilon}{\eta} G:\sL \rightarrow \X$, as in \eqref{eqn:smcadj}, where $\X = (\X,\btimes,I)$ and $\sL = (\sL,\otimes,R)$ are \textit{closed} symmetric monoidal categories.
By \bref{thm:assoc_enr_adj}, there is an associated $\X$-enriched adjunction $\acute{F} \nsststile{\epsilon}{\eta} \grave{G}:G_*\aeL \rightarrow \aeX$ \eqref{eqn:assoc_enr_adj} whose underlying ordinary adjunction coincides with that of $F \nsststile{\epsilon}{\eta} G$.
\end{ParSub}

We are chiefly concerned with those cases in which $\X$ is a cartesian closed category, whose objects are thought of as `spaces' of some sort, and $\sL$ is the category of $R$-modules (or $R$-vector-space objects) in $\X$ for some commutative ring object $R$ in $\X$.  In particular, our principal example is as follows:

\begin{PropSub} \label{thm:conv_smadj}
Let $\X := \Conv$ and $\sL := \RVect(\X)$ (where $R := \RR$ or $\CC$) be the categories of convergence spaces and convergence vector spaces, respectively.  Then there is a symmetric monoidal adjunction $F \nsststile{\epsilon}{\eta} G : \sL \rightarrow \X$, with $\X$ cartesian closed and $\sL$ symmetric monoidal closed, where $G$ is the forgetful functor and $\sL$ has unit object $R$.
\end{PropSub}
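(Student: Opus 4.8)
The plan is to build the desired adjunction from three ingredients: the cartesian closedness of $\Conv$, the free--forgetful adjunction $F \dashv G$ between $\X$ and $\sL = \RVect(\X)$, and a symmetric monoidal closed structure on $\sL$ for which the free functor is strong monoidal. To begin, recall the classical fact that $\Conv$ is cartesian closed: writing $\aeX(Y,Z)$ for the set of continuous maps $Y \to Z$ equipped with the structure of \emph{continuous convergence}, one has $\X(X \times Y, Z) \cong \X(X, \aeX(Y,Z))$ naturally in $X, Y, Z$ (see \cite{BeBu}). Next, the forgetful functor $G : \sL \to \X$ has a left adjoint $F$, the \emph{free convergence vector space} functor: since $\Conv$ is topological over $\Set$, the forgetful functor from $\sL$ to the category of $R$-vector spaces is topological, so $\sL$ is complete and cocomplete and $G$ creates limits; and $FX$ may be described concretely as the algebraic free $R$-vector space on the underlying set of $X$, carrying the finest convergence-vector-space structure through which the insertion of generators $X \to FX$ is continuous. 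One checks that $F(1) \cong R$, so that the scalar field $R$ is the free convergence vector space on a point.

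Next, the category $\sL$ carries a symmetric monoidal closed structure with unit $R$. The internal hom $\aeL(V,W)$ is the $R$-vector space of continuous linear maps $V \to W$, with the pointwise linear structure and the convergence structure it inherits as a subspace of $\aeX(GV,GW)$. The tensor product $V \otimes W$ is characterized by a natural bijection between continuous linear maps $V \otimes W \to Z$ and continuous $R$-bilinear maps $V \times W \to Z$; it exists because the latter is a representable functor of $Z$, and may be realized as an appropriate colimit of free convergence vector spaces, or equivalently as the algebraic tensor product $V \otimes_R W$ equipped with the finest compatible convergence structure. One then obtains $\sL(V \otimes W, Z) \cong \sL(V, \aeL(W,Z))$ naturally, with unit $F(1) \cong R$, and the associativity, unit, and symmetry isomorphisms are those of the algebraic tensor product of $R$-modules, which are continuous because $G$ is faithful and the underlying comparison maps are the evident $R$-linear isomorphisms. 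These verifications are of a routine convergence-theoretic character and are available in the literature on convergence vector spaces; cf. \cite{BeBu,Bu}.

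To conclude, recall that an adjunction $F \dashv G$ with $\X$ cartesian and $\sL$ symmetric monoidal is a symmetric monoidal adjunction precisely when $F$ is a \emph{strong} symmetric monoidal functor. Now $G$ is visibly lax symmetric monoidal --- with structure maps $GV \times GW \to G(V \otimes W)$ induced by the universal bilinear maps, and unit map $1 \to GR$ picking out $1 \in R$ --- so it remains only to verify that the comparison morphisms $F(1) \to R$ and $F(X \times Y) \to FX \otimes FY$ that this lax structure induces on $F$ are (coherent) isomorphisms. The first is an isomorphism by the choice of the tensor unit. For the second, I compare representables: for $Z \in \sL$ one has
\[
\sL(FX \otimes FY,\, Z) \;\cong\; \sL(FX,\, \aeL(FY,Z)) \;\cong\; \X(X,\, G\aeL(FY,Z)),
\]
\[
\sL(F(X \times Y),\, Z) \;\cong\; \X(X \times Y,\, GZ) \;\cong\; \X(X,\, \aeX(Y,GZ)),
\]
so the comparison is an isomorphism --- naturally in $X$, hence in $X$ and $Y$ --- as soon as one produces a natural isomorphism $G\aeL(FY,Z) \cong \aeX(Y,GZ)$ in $\X$. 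This last isomorphism, an ``exponential law for free convergence vector spaces'' asserting that $\phi \mapsto \phi \circ \eta_Y$ is a homeomorphism from the space of continuous linear maps $FY \to Z$ (with continuous convergence) onto the space $\aeX(Y,GZ)$ of continuous maps $Y \to GZ$ (with continuous convergence), is the crux of the proof: the underlying bijection is the defining one of the adjunction $F \dashv G$, and what must be checked is that the two continuous-convergence structures coincide. This follows from the description of $FY$ as carrying the finest compatible structure --- so that convergence of a filter into $\aeL(FY,Z)$ is governed by its restrictions along $\eta_Y$ --- and for the case $Z = R$ it is the familiar identification $\aeL(FY,R) \cong \aeX(Y,R)$ of the continuous dual of a free convergence vector space. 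Granting it, $F$ preserves the tensor unit and tensor product up to coherent natural isomorphism (the pentagon, triangle, and hexagon being a routine diagram chase on underlying spaces), so $F$ is strong symmetric monoidal and $F \dashv G : \sL \to \X$ is a symmetric monoidal adjunction. The main obstacle is therefore concentrated in the exponential law, i.e. in the comparison of the two continuous-convergence structures; everything else is either classical ($\Conv$ cartesian closed, $\sL$ symmetric monoidal closed as part of the established theory of convergence vector spaces) or formal (the representable-functor computation and the coherence checks).
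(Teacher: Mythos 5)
Your proposal is correct in outline but takes a genuinely different route from the paper. The paper's proof is essentially citation-based: cartesian closedness of $\Conv$ is quoted from \cite{BeBu}, and then a single appeal to Seal \cite{Seal:Tensor}, 4.6 delivers, in one stroke, the adjunction $F \dashv G$, the symmetric monoidal closed structure on $\sL$ with unit $R$, \emph{and} the strong monoidal structure on $F$; the only thing verified by hand is the symmetry of $F$'s monoidal constraints, which is immediate from the explicit description of the structure maps on generators, after which Kelly's doctrinal adjunction (\cite{Ke:Doctr}, 1.5) upgrades everything to a symmetric monoidal adjunction. You instead assemble the structures directly and reduce the strong monoidality of $F$, via a Yoneda argument, to the natural isomorphism $G\aeL(FY,Z) \cong \aeX(Y,GZ)$. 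That isomorphism is exactly the cotensor formula of \bref{thm:assoc_enr_adj}.2 --- which in the paper's architecture is \emph{derived from} the symmetric monoidal adjunction, so you are in effect reversing the logical order: prove the exponential law first by convergence-theoretic means, then deduce the monoidal adjunction. This is a legitimate and arguably more self-contained strategy, but note two things. First, the exponential law is precisely where all the analytic content lives (it is, in essence, what Seal's theorem packages up), and your justification of it --- that convergence in $FY$ is ``governed by'' restrictions along $\eta_Y$ because $FY$ carries the finest compatible structure --- is a sketch of the one genuinely nontrivial verification, namely that the continuous-convergence structure on $\aeL(FY,Z)$ is transported, not merely bijected, onto that of $\aeX(Y,GZ)$; you flag this honestly but do not discharge it. Second, your representable computation shows that \emph{some} isomorphism $F(X\times Y) \cong FX \otimes FY$ exists; one must still check that the composite of the displayed bijections is induced by precomposition with the canonical oplax constraint of $F$ (the mate of the lax structure on $G$), since doctrinal adjunction requires that particular map to be invertible. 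Both points are routine but are exactly the checks the paper avoids by citing \cite{Seal:Tensor}.
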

\begin{proof}
Firstly, $\X$ is cartesian closed; see, e.g., \cite{BeBu} 1.5.2.  We may apply 4.6 of \cite{Seal:Tensor} in order to obtain an adjunction $F \nsststile{\epsilon}{\eta} G : \sL \rightarrow \X$ with $\sL = (\sL,\otimes,R)$ symmetric monoidal closed and $F$ a strong monoidal functor.  Moreover, it is noted in \cite{Seal:Tensor} that for each $X \in \X$, the underlying $R$-module of $FX$ is simply the usual free $R$-module on the underlying set of $X$.  Further, for all $E_1, E_2 \in \sL$, the underlying $R$-module of the monoidal product $E_1 \otimes E_2$ in $\sL$ is the usual tensor product of $R$-modules.  Moreover, for all $X,Y \in \X$, the structure isomorphism $FX \otimes FY \rightarrow F(X \times Y)$ has as its inverse the unique linear map $F(X \times Y) \rightarrow FX \otimes FY$ given on generators $(x,y) \in X \times Y$ by $(x,y) \mapsto x \otimes y$.  Further, each symmetry isomorphism $E_1 \otimes E_2 \rightarrow E_2 \otimes E_1$ is the unique linear map given on generators by $e_1 \otimes e_2 \mapsto e_2 \otimes e_1$.  Using these facts, the symmetry law for the monoidal functor $F$ is immediately verified.  Hence $F$ is a strong symmetric monoidal functor, and the needed symmetric monoidal adjunction is obtained by \cite{Ke:Doctr}, 1.5.
\end{proof}

\begin{DefSub} \label{def:nat_dist}
Given data as in \bref{par:smc_adj}, the $\X$-category $\eL := G_*\aeL$ is cotensored (by \bref{thm:assoc_enr_adj}), so we have in particular an $\X$-adjunction
\begin{equation}\label{eq:hom_cotensor}
\xymatrix {
\aeX \ar@/_0.5pc/[rr]_{[-,R]}^(0.4){\delta}^(0.6){\sigma}^{\top} & & {\eL^\op\;.} \ar@/_0.5pc/[ll]_{\eL(-,R)}
}
\end{equation}
We call the $\X$-monad $\DD = (\bD,\delta,\kappa)$ on $\aeX$ induced by this $\X$-adjunction the \textit{natural distribution monad}.  Hence 
$$\bD X = \eL([X,R],R)\;\;\;\;\text{$\X$-naturally in $X \in \X$.}$$
\end{DefSub}

\begin{ExaSub}\label{exa:nat_dist_conv}
In our principal example \pbref{thm:conv_smadj}, we can form cotensors $[X,E]$ in $\eL$ (where $X \in \X$ and $E \in \eL$) by equipping the space $\aeX(X,GE)$ of continuous $E$-valued functions with the pointwise vector space structure.  The associated cotensor unit morphism $\delta_X:X \rightarrow \eL([X,E],E)$ sends each $x \in X$ to the \textit{Dirac functional} $\lambda f.f(x)$.  For $X \in \aeX$, $DX = \eL([X,R],R)$ is the convergence vector space of continuous linear functionals $[X,R] \rightarrow R$.  In the case that $X$ is a compact Hausdorff topological space, it is well-known that the convergence structure on $G[X,R] = \aeX(X,R)$ (namely, \textit{continuous convergence}) coincides with \textit{uniform convergence} (induced by the $\infty$-norm); see, e.g., \cite{McCNta} III.1.  Hence the elements of $DX$ are the continuous linear functionals on the Banach space of $R$-valued continuous functions on $X$ --- i.e. the $R$-valued Radon measures on $X$.
\end{ExaSub}

The following lemma reduces our task of proving our Fubini theorem for convergence spaces \pbref{thm:fubini_for_conv} to the problem of proving that the natural distribution monad on $\Conv$ is commutative:

\begin{LemSub} \label{thm:comm_is_fubini_for_conv}
The statement of \bref{thm:fubini_for_conv} is equivalent to the statement that the natural distribution monad $\DD$ on $\Conv$ \pbref{exa:nat_dist_conv} is commutative.
\end{LemSub}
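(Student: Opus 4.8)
The plan is to compute, on underlying sets, the two morphisms $\otimes_{XY},\widetilde{\otimes}_{XY}:DX\times DY\rightarrow D(X\times Y)$ of \bref{def:comm_mnd} attached to the natural distribution monad $\DD$ on $\X=\Conv$ (with its cartesian monoidal structure, so $\btimes=\times$), and to identify them with the two iterated‑integral functionals on the left‑ and right‑hand sides of \eqref{eqn:lambda_fubini}. Since the underlying‑set functor $\Conv\rightarrow\Set$ is faithful and preserves products, a $\Conv$‑morphism out of $DX\times DY$ is determined by its values on pairs $(\mu,\nu)$ with $\mu\in DX$, $\nu\in DY$; likewise a point of $D(X\times Y)=\eL([X\times Y,R],R)$ is a continuous linear functional on $[X\times Y,R]$, hence determined by its values at $f\in[X\times Y,R]$. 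So it suffices to evaluate $\otimes_{XY}(\mu,\nu)(f)$ and $\widetilde{\otimes}_{XY}(\mu,\nu)(f)$.

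First I would record, for the case at hand, the concrete form of the ingredients of $\DD$ and of \bref{def:comm_mnd}. From \bref{exa:nat_dist_conv}: for $g:Y\rightarrow Z$ in $\X$, the map $Dg:DY\rightarrow DZ$ satisfies $(Dg(\rho))(h)=\rho(h\circ g)$ for $h\in[Z,R]$; the unit $\delta_X$ carries $x$ to the Dirac functional $\lambda f.\,f(x)$; and the multiplication $\kappa_Z:DDZ\rightarrow DZ$, being $\eL(-,R)$ applied to the canonical morphism $[Z,R]\rightarrow[DZ,R]$ (which sends $f$ to the evaluation functional $\mathrm{ev}_f=\lambda\mu.\,\mu(f)$), satisfies $(\kappa_Z(\Phi))(f)=\Phi(\mathrm{ev}_f)$. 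Unwinding the transposes in \bref{def:comm_mnd}.1, one finds, for $\lambda\in DX$ and $y\in Y$, that $t'_{XY}(\lambda,y)=D(\iota_y)(\lambda)$ with $\iota_y:X\rightarrow X\times Y$, $x\mapsto(x,y)$, so that $t'_{XY}(\lambda,y)(f)=\lambda(\lambda x.\,f(x,y))$; and, for $\mu\in DX$, $\rho\in DY$, that $t''_{DX,Y}(\mu,\rho)=D(g_\mu)(\rho)$ with $g_\mu:Y\rightarrow DX\times Y$, $y\mapsto(\mu,y)$, so that $t''_{DX,Y}(\mu,\rho)(h)=\rho(\lambda y.\,h(\mu,y))$ for $h\in[DX\times Y,R]$; and symmetrically for $t''_{XY}$ and $t'_{X,DY}$. (Here each inner expression $\lambda x.\,f(x,y)=f\circ\iota_y$ and $\lambda y.\,\mu(\lambda x.\,f(x,y))=\mu\circ\widehat{f}$, with $\widehat{f}$ the transpose of $f$, is automatically a morphism in $\Conv$ by cartesian closedness, so the displayed applications make sense.)

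Chaining these through the two composites of \bref{def:comm_mnd}.2 and applying $\kappa_{X\times Y}$ will yield
\[
\otimes_{XY}(\mu,\nu)(f)=\nu\bigl(\lambda y.\,\mu(\lambda x.\,f(x,y))\bigr),\qquad \widetilde{\otimes}_{XY}(\mu,\nu)(f)=\mu\bigl(\lambda x.\,\nu(\lambda y.\,f(x,y))\bigr)
\]
for all $\mu\in DX$, $\nu\in DY$, $f\in[X\times Y,R]$ --- i.e.\ exactly the two outer expressions in \eqref{eqn:lambda_fubini}. Granting this, commutativity of $\DD$, i.e.\ $\otimes_{XY}=\widetilde{\otimes}_{XY}$ for all $X,Y$, is the assertion that $\nu(\lambda y.\,\mu(\lambda x.\,f(x,y)))=\mu(\lambda x.\,\nu(\lambda y.\,f(x,y)))$ for all such $X,Y,\mu,\nu,f$, and this is equivalent to the statement of \bref{thm:fubini_for_conv}: if it holds then $\mu\otimes\nu:=\otimes_{XY}(\mu,\nu)$, being a point of $D(X\times Y)$, is a continuous linear functional satisfying \eqref{eqn:fubini}, and it is the unique such functional since any functional satisfying \eqref{eqn:fubini} agrees with it at every $f$; conversely, the functional $\mu\otimes\nu$ furnished by \bref{thm:fubini_for_conv} witnesses $\otimes_{XY}(\mu,\nu)=\widetilde{\otimes}_{XY}(\mu,\nu)$ for each $(\mu,\nu)$, whence $\otimes_{XY}=\widetilde{\otimes}_{XY}$ by faithfulness of $\Conv\rightarrow\Set$. (The clause ``each integrand is a continuous function'' in \bref{prop:fubini} is, as observed in \bref{sec:intro}, automatic here.)

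The step I expect to be the main obstacle is the bookkeeping in the second and third paragraphs: propagating the identifications $[X,R]\cong\aeL(FX,R)$, $DX=\eL([X,R],R)$ through the transposes so that $D$ on morphisms, the strengths $t'$, $t''$, and the multiplication $\kappa$ all come out with the correct variances, and so that the two composites of \bref{def:comm_mnd}.2 produce the two orders of integration rather than the same one twice. Once the concrete formulas for $t'$, $t''$, $\kappa$ are pinned down, the remaining computation is a routine composition.
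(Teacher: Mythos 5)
Your proposal is correct and follows essentially the same route as the paper: compute the concrete set-level formulas for $t'_{XY}$, $t''_{XY}$, and $\kappa$ (the paper obtains the latter as $\eL(\sigma_{[X,R]},R)$ with $\sigma_E(e)=\lambda\phi.\phi(e)$, which is exactly your $\kappa_Z(\Phi)(f)=\Phi(\mathrm{ev}_f)$), chain them through the two composites of \bref{def:comm_mnd}.2 to identify $\otimes_{XY}(\mu,\nu)(f)$ and $\widetilde{\otimes}_{XY}(\mu,\nu)(f)$ with the two sides of \eqref{eqn:lambda_fubini}, and then read off the equivalence. The bookkeeping you flag as the main obstacle does come out as you predict.
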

\begin{proof}
Let $X,Y$ be convergence spaces.  The morphisms $t'_{XY}:DX \times Y \rightarrow D (X \times Y)$ and $t''_{XY}:X \times D Y \rightarrow D (X \times Y)$ \pbref{def:comm_mnd} are given by
$$t'_{XY}(\mu,y) = \lambda f.\mu(\lambda x.f(x,y))\;,\;\;\;\;t''_{XY}(x,\nu) = \lambda f.\nu(\lambda y.f(x,y))\;.$$
Next note that that the components of the counit $\sigma$ of the adjunction $[-,R] \dashv \eL(-,R)$ inducing $\DD$ \pbref{def:nat_dist} are the morphisms $\sigma_E:E \rightarrow [\eL(E,R),R]$ of $\sL$ given by $\sigma_E(e) = \lambda \phi.\phi(e)$.  Note also that for each $X \in \aeX$, 
$$\kappa_X = \eL(\sigma_{[X,R]},R) : \eL([D X,R],R) \rightarrow \eL([X,R],R)\;.$$
Using these facts, it is straightforward to compute that the maps $\otimes_{XY},\widetilde{\otimes}_{X Y}:D X \times D Y \rightarrow D (X \times Y) = \eL([X \times Y,R],R)$ of \bref{def:comm_mnd} send a pair $(\mu,\nu) \in DX \times DY$ to the functionals $\otimes_{XY}(\mu,\nu),\widetilde{\otimes}_{X Y}(\mu,\nu)$ whose values at each $f \in [X \times Y,R]$ are the left- and right-hand-sides (respectively) of the Fubini equation \eqref{eqn:lambda_fubini} (equivalently, \eqref{eqn:fubini}).
\end{proof}

\section{Statement of the abstract Fubini theorem}

\begin{DefSub} \label{def:dualization}
Given data as in \bref{par:smc_adj}, we make the following definitions:
\begin{enumerate}
\item We call the $\sL$-adjunction 
$$
\xymatrix {
\aeL \ar@/_0.5pc/[rr]_{\aeL(-,R)}^(0.4){}^(0.6){}^{\top} & & {\aeL^\op} \ar@/_0.5pc/[ll]_{\aeL(-,R)}
}
$$
the \textit{dualization $\sL$-adjunction} and $(-)^* := \aeL(-,R)$ \textit{the dualization $\sL$-functor}.
\item The $\sL$-monad $\HH = (\bH ,\partial,\gamma)$ on $\aeL$ induced by the dualization $\sL$-adjunction is called the \textit{double-dualization $\sL$-monad}.
\item An object $E \in \sL$ is said to be \textit{reflexive (in $\aeL$)} if the morphism $\partial_E:E \rightarrow HE = E^{**}$ is an isomorphism in $\sL$.
\end{enumerate}
\end{DefSub}
\begin{ExaSub} \label{exa:refl_conv_vect}
In the setting of convergence vector spaces \pbref{thm:conv_smadj}, the following results of Butzmann \cite{Bu} show that reflexive objects are abundant:
\begin{enumerate}
\item For a locally convex topological vector space $E$ (considered as an object of $\sL$), the double-dual $E^{**}$ (taken in $\aeL$) is the Cauchy-completion of $E$, and $E$ is reflexive in $\aeL$ if and only if $E$ is Cauchy-complete.
\item Every space $\aeX(X,R)$ of continuous $R$-valued functions on a convergence space $X \in \X$ is reflexive in $\aeL$ when endowed with the pointwise $R$-vector-space structure.  As noted in \bref{exa:nat_dist_conv}, each such convergence vector space $\aeX(X,R)$ is a cotensor $[X,R]$ in the $\X$-category $\eL = G_*\aeL$.
\end{enumerate}
\end{ExaSub}

We can now state our main abstract theorem, to be proved in the following sections:
\begin{ThmSub} \label{thm:general_fubini}
Let $\xymatrix {\X \ar@/_0.5pc/[rr]_F^(0.4){\eta}^(0.6){\epsilon}^{\top} & & \sL \ar@/_0.5pc/[ll]_G}$ be a symmetric monoidal adjunction, with $\X$ and $\sL$ symmetric monoidal closed and $\sL$ locally small and finitely well-complete \pbref{rem:fwc}.  Suppose that each cotensor $[X,R]$ in $G_*\aeL$ is reflexive, where $X \in \X$ and $R$ is the unit object of $\sL$.  Then the natural distribution monad $\DD$ is commutative.
\end{ThmSub}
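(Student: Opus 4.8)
The plan is to follow the strategy outlined in the introduction: realize $\DD$, up to isomorphism of $\X$-monads, as the $\X$-monad induced by a composite of two $\X$-adjunctions whose underlying ordinary adjunctions are symmetric monoidal, and then transport commutativity across the isomorphism via \bref{rem:comm_inv_under_iso}. First I would recall from \bref{par:cotensor_dual_of_free} that the hom-cotensor $\X$-adjunction $[-,R] \nsststile{\sigma}{\delta} \eL(-,R)$ inducing $\DD$ \pbref{def:nat_dist} factors as the composite of $\acute{F} \nsststile{\epsilon}{\eta} \grave{G}$ with $G_*$ applied to the dualization $\sL$-adjunction $\aeL(-,R) \dashv \aeL(-,R) : \aeL^\op \rightarrow \aeL$. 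Thus $\bD = \grave{G} \circ G_*(\aeL(-,R)) \circ G_*(\aeL(-,R)) \circ \acute{F}$, and the $\sL$-monad sitting in the middle of this composite is precisely the double-dualization $\sL$-monad $\HH$ of \bref{def:dualization}; by \bref{thm:mnd_morph_from_adj_factn}-style reasoning (more directly, by associativity of composition of adjunctions and \bref{thm:mon_func_detd_by_adj}), $\DD$ is the $\X$-monad induced by the composite $\X$-adjunction obtained by gluing $\acute{F} \nsststile{\epsilon}{\eta} \grave{G}$ to $G_*$ of the $\sL$-enriched Eilenberg–Moore adjunction $F^{\HH} \nsststile{}{} G^{\HH}$ for $\HH$ --- here using \bref{thm:uniq_adj} to replace the dualization adjunction by the Eilenberg–Moore one, as they share a right adjoint up to the canonical comparison, modulo checking $\sL$ is $\HH$-monadic-enough (or just working with the Kleisli-type presentation and \bref{thm:uniq_adj} directly).

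Next I would invoke the finite well-completeness and local smallness of $\sL$ to apply the enriched adjoint factorization theorem \bref{thm:adj_factn} (via \bref{thm:base_adj_fact}, with $\eB := \aeL$, using \bref{thm:base_fwc} to supply the $\V$-proper $\V$-factorization-system $(\Epi_\sL\aeL,\StrMono_\sL\aeL)$ and the requisite $\sL$-limits, $\aeL$ being cotensored since it is closed) to the $\sL$-adjunction $F^{\HH} \nsststile{}{} G^{\HH}$. This factors it as an $\sL$-enriched reflection $\aeL \rightarrow \tL$ followed by a conservative left adjoint, where $\tL := \aeL_{\Sigma_{F^\HH}}$ is the full sub-$\sL$-category of \emph{complete} objects and $\tHH$ is the idempotent completion $\sL$-monad. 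Since $\tL$ is a reflective sub-$\sL$-category of $\aeL$ it is closed under cotensors, hence under internal homs into its objects, so by \bref{thm:day_refl} (in the form \bref{thm:enr_refl_closed}) the reflection is symmetric monoidal with $\tL$ symmetric monoidal closed; and by \bref{thm:enr_refl_closed} its induced $\sL$-monad $\tHH$ is isomorphic to the $\sL$-monad induced by $\acute{K} \nsststile{}{} \grave{J}$. Substituting this reflection for the Eilenberg–Moore adjunction in the composite, I obtain the composite $\X$-adjunction \eqref{eq:fg_compl_composite_adj}, whose two factors have symmetric monoidal underlying ordinary adjunctions. Let $\DD'$ denote the $\X$-monad it induces. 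Because the conservative left adjoint $\bS''$ discarded from the factorization carries the comparison exhibiting $\HH$-algebras, one checks — this is where reflexivity of each cotensor $[X,R] \cong \aeL(FX,R) = (FX)^*$ enters — that the completion $\tH FX$ of the free span $FX$ agrees with $(FX)^{**} \cong [X,R]^*$, so that $\DD'$ and $\DD$ agree on objects, and in fact (chasing the adjoint comparisons and \bref{thm:uniq_adj}) as $\X$-monads $\DD' \cong \DD$.

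Finally, I would observe that the underlying ordinary composite adjunction of \eqref{eq:fg_compl_composite_adj} is a composite of symmetric monoidal adjunctions, hence a symmetric monoidal adjunction, whose induced ordinary monad $D'$ therefore carries a canonical symmetric monoidal monad structure $\TT'$; by Kock's theorem \bref{thm:comm_sm_mnd} this gives rise to a commutative $\X$-monad $(\TT')^c$, and by \bref{thm:enradj_assoc_smadj_ind_commmnd} (applied to the composite symmetric monoidal adjunction, using \bref{thm:compn_assoc_enr_adj} to identify the associated $\X$-enriched adjunction of the composite with the composite of the associated $\X$-enriched adjunctions) this $(\TT')^c$ coincides \emph{as an $\X$-monad} with $\DD'$. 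Hence $\DD'$ is commutative, and by \bref{rem:comm_inv_under_iso} so is $\DD$. The main obstacle is the rigour point flagged in the introduction: it is not enough that $\DD$ and $(\TT')^c$ have isomorphic underlying ordinary monads — I must verify that the given $\X$-enrichment of $\DD$ (coming from the hom-cotensor $\X$-adjunction \eqref{eq:hom_cotensor}) literally equals the $\X$-enrichment produced by \bref{thm:comm_sm_mnd} applied to $\TT'$, i.e. that the isomorphism of underlying monads upgrades to an isomorphism of $\X$-monads. This is exactly what \bref{thm:enradj_assoc_smadj_ind_commmnd}, \bref{thm:compn_assoc_enr_adj}, and the careful bookkeeping of the associated-enriched-adjunction construction in \bref{sec:enr_smc_adj} are designed to handle, so the work is in threading those identifications together with the reflexivity-driven object-level computation of $\tH F X$.
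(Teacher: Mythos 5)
Your proposal follows essentially the same route as the paper's proof: express $\DD$ as $[\acute{F},\grave{G}](G_*(\HH))$ via the Eilenberg--Moore adjunction for the double-dualization monad, factorize that adjunction through the symmetric monoidal closed reflection onto the complete objects using \bref{thm:base_adj_fact} and \bref{thm:enr_refl_closed}, use reflexivity of $[X,R]\cong(FX)^*$ to identify $\tH FX$ with $(FX)^{**}$ so that the resulting monad $\tDD$ is isomorphic to $\DD$, deduce commutativity of $\tDD$ from \bref{thm:compn_assoc_enr_adj} and \bref{thm:enradj_assoc_smadj_ind_commmnd}, and transport it across the isomorphism by \bref{rem:comm_inv_under_iso}. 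The only points where the paper is more careful than your sketch are that the two inner adjunctions inducing $\HH$ have different codomains (so one compares them through the induced monad via \bref{thm:mon_func_detd_by_adj} rather than \bref{thm:uniq_adj} directly), and that the isomorphism $\tDD\cong\DD$ is obtained from an explicit monad morphism $GiF$ with strong-mono components shown invertible by an orthogonality argument, rather than from objectwise agreement.
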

\begin{RemSub} \label{rem:base_fwc}
The hypothesis in \bref{thm:general_fubini} that $\sL$ is finitely well-complete (f.w.c.) may be replaced by the hypotheses that $\X$ is f.w.c. and that $G$ \textit{creates limits} (\cite{MacL}), as we now show.  Indeed, under these alternate hypotheses, $\sL$ is clearly finitely complete.  Further, it is known (e.g. \cite{Wy:QuTo}, 10.5) that right adjoints (such as $G$) preserve strong monomorphisms.  It follows that since $\X$ has arbitrary fiber products of strong monomorphisms and $G$ creates fiber products, $\sL$ has arbitrary fiber products of strong monomorphisms.  Hence, in view of \bref{rem:fwc}, $\sL$ is f.w.c.
\end{RemSub}
\begin{RemSub}
Further to \bref{rem:base_fwc}, any small-complete category $\X$ whose objects each have but a set of strong subobjects is f.w.c.  Hence, in particular, every category $\X$ topological over $\Set$ is f.w.c., since the strong monomorphisms in $\X$ are exactly the \textit{embeddings} or initial injections (e.g. by \cite{Wy:QuTo} 11.9), so that strong subobjects correspond bijectively to subset inclusions.
\end{RemSub}

\begin{CorSub}\label{thm:nat_dist_on_conv_comm}
The natural distribution monad $\DD$ on the category $\X = \Conv$ of convergence spaces \pbref{exa:nat_dist_conv} is commutative.
\end{CorSub}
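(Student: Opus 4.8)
The plan is simply to verify that the symmetric monoidal adjunction $F \nsststile{\epsilon}{\eta} G : \sL \rightarrow \X$ furnished by \bref{thm:conv_smadj}, with $\X = \Conv$ and $\sL = \RVect(\X)$, meets every hypothesis of the abstract Fubini theorem \bref{thm:general_fubini}; the corollary is then immediate, and, by way of \bref{thm:comm_is_fubini_for_conv}, this also yields \bref{thm:fubini_for_conv}.

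First I would collect what is already in hand: by \bref{thm:conv_smadj}, $\X$ and $\sL$ are symmetric monoidal closed, $R$ is the unit object of $\sL$, and $F \dashv G$ is a symmetric monoidal adjunction. So only three points remain to be checked: local smallness of $\sL$, finite well-completeness of $\sL$ \pbref{rem:fwc}, and reflexivity of each cotensor $[X,R]$ in $G_*\aeL$.

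Local smallness is immediate, since a convergence vector space has an underlying set and the morphisms between two of them form a subset of the corresponding function set. For finite well-completeness I would not argue directly in $\sL$ but instead invoke \bref{rem:base_fwc}, which reduces the task to showing that (i) $\X = \Conv$ is finitely well-complete and (ii) $G$ creates limits. For (ii), the forgetful functor $G : \RVect(\X) \rightarrow \X$ creates limits because a diagram of $R$-vector-space objects has its limit computed on underlying spaces in $\X$, the requisite structure maps being induced by the universal property --- the usual fact that forgetful functors of categories of finite-product (algebraic) structures internal to $\X$ create limits. For (i), $\Conv$ is topological over $\Set$, so by the remark immediately following \bref{rem:base_fwc} it is finitely well-complete, its strong monomorphisms being the initial injections, of which each object has but a set. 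Finally, the reflexivity of the cotensors is exactly \bref{exa:refl_conv_vect}.2: Butzmann's theorem \cite{Bu} that every space $\aeX(X,R)$ of continuous $R$-valued functions, endowed with its pointwise $R$-vector-space structure, is reflexive in $\aeL$, together with the identification of $\aeX(X,R)$ as the cotensor $[X,R]$ in $\eL = G_*\aeL$ recorded in \bref{exa:nat_dist_conv}.

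With all hypotheses of \bref{thm:general_fubini} in place, that theorem applies and gives that the natural distribution monad $\DD$ on $\Conv$ is commutative. I expect no genuine obstacle here: the only point requiring any care is the assertion that $G$ creates limits, which is routine for categories of internal algebraic structures, while the two substantive inputs --- the abstract theorem and Butzmann's reflexivity result --- are already available.
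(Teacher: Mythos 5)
Your proposal is correct and follows essentially the same route as the paper: verify the hypotheses of \bref{thm:general_fubini} for the adjunction of \bref{thm:conv_smadj}, using Butzmann's reflexivity result for the cotensors, \bref{rem:base_fwc} together with $\Conv$ being topological over $\Set$ for finite well-completeness, and the fact that $G$ creates limits. The only cosmetic difference is that the paper justifies the limit-creation of $G$ by citing specific results ($G$ being a regular functor), whereas you give the standard direct argument for forgetful functors from categories of internal algebraic structures; both are adequate.
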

\begin{proof}
$\DD$ is the natural distribution monad associated to the symmetric monoidal adjunction of \bref{thm:conv_smadj}, which satisfies the hypotheses of \bref{thm:general_fubini}, as follows.
By \bref{exa:refl_conv_vect}, the cotensors $[X,R]$ in $G_*\aeL$ are reflexive in $\aeL$.  Further, $G$ creates limits (by \cite{He} 2.9, since by \cite{Nel:EnrAlg}, 4.4, $G$ is a \textit{regular functor}).  Hence since $\X$ is topological over $\Set$ and hence f.w.c., we deduce that $\sL$ is f.w.c. by \bref{rem:base_fwc}.  Also, $\sL$ is locally small.
\end{proof}
\begin{RemSub}
In view of \bref{thm:comm_is_fubini_for_conv}, our Fubini theorem for convergence spaces \pbref{thm:fubini_for_conv} follows from \bref{thm:nat_dist_on_conv_comm}.
\end{RemSub}

\section{Natural distributions via double-dualization}

The first key observation on our way to the proof of our main theorem \pbref{thm:general_fubini} is as follows.  Again, we work with given data as in \bref{par:smc_adj}.

\begin{PropSub} \label{prop:dist_via_dd}
Suppose that $\sL$ has equalizers.  Then, choosing the cotensors in $G_*\aeL$ as in \bref{thm:assoc_enr_adj}.\textnormal{2}, \bref{par:cotensor_dual_of_free}, the following hold:
\begin{enumerate}
\item $\DD$ is the $\X$-monad on $\aeX$ induced by the composite $\X$-adjunction
\begin{equation}\label{eq:comp_with_em_adj}
\xymatrix {
\aeX \ar@/_0.5pc/[rr]_{\acute{F}}^(0.4){\eta}^(0.6){\epsilon}^{\top} & & G_*\aeL \ar@/_0.5pc/[ll]_{\grave{G}} \ar@/_0.5pc/[rr]_{G_*(\bF^{\HH})}^(0.4){}^(0.6){}^{\top} & & {G_*\aeL^{\HH}\;,} \ar@/_0.5pc/[ll]_{G_*(\bG^{\HH})}
}
\end{equation}
where the rightmost $\X$-adjunction is obtained by applying $G_*:\LCAT \rightarrow \XCAT$ to the Eilenberg-Moore $\sL$-adjunction $\bF^{\HH} \dashv \bG^{\HH}$ for the double-dualization $\sL$-monad $\HH$ \pbref{def:dualization}.
\item $\DD$ may be obtained from $\HH$ by first applying the 2-functor $G_*:\LCAT \rightarrow \XCAT$ and then applying the monoidal functor $[\acute{F},\grave{G}]:\XCAT(G_*\aeL,G_*\aeL) \rightarrow \XCAT(\aeX,\aeX)$ \pbref{thm:mon_func_detd_by_adj}, so that
\begin{equation}\label{eq:dist_via_dd}\DD = [\acute{F},\grave{G}](G_*(\HH))\;.\end{equation}
\end{enumerate}
\end{PropSub}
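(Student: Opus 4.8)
The plan is to deduce both assertions simultaneously from \bref{thm:mon_func_detd_by_adj}, applied to the $\X$-adjunction $\acute{F} \nsststile{\epsilon}{\eta} \grave{G} : G_*\aeL \rightarrow \aeX$ in the 2-category $\XCAT$, combined with the identification in \bref{par:cotensor_dual_of_free} of the hom-cotensor $\X$-adjunction \eqref{eq:hom_cotensor} as a composite, and with the fact that the 2-functor $G_* : \LCAT \rightarrow \XCAT$ transports the double-dualization $\sL$-monad $\HH$ to the $\X$-monad $G_*(\HH)$ on $G_*\aeL$. The point is that nothing here requires computing with cotensors or enriched hom-objects directly: it is all formal 2-category theory once \bref{par:cotensor_dual_of_free} is in hand.

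First I would record two descriptions of $\HH$ as a monad induced by an adjunction. Since $\sL$ has equalizers, the Eilenberg--Moore $\sL$-category $\aeL^{\HH}$ exists, and the $\sL$-adjunction $\bF^{\HH} \dashv \bG^{\HH}$ induces $\HH$ on $\aeL$; also, by \bref{def:dualization}, the dualization $\sL$-adjunction $\aeL(-,R) \dashv \aeL(-,R) : \aeL^{\op} \rightarrow \aeL$ induces $\HH$ on $\aeL$. Applying $G_*$ --- which, being a 2-functor, preserves adjunctions together with their units, counits, composites, and hence the monads they induce (as monad objects, not merely up to isomorphism) --- I obtain $\X$-adjunctions $G_*(\bF^{\HH}) \dashv G_*(\bG^{\HH}) : G_*\aeL^{\HH} \rightarrow G_*\aeL$ and $G_*(\aeL(-,R)) \dashv G_*(\aeL(-,R)) : G_*(\aeL^{\op}) \rightarrow G_*\aeL$, each of which induces the $\X$-monad $G_*(\HH)$ on $G_*\aeL$.

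Now I would invoke \bref{thm:mon_func_detd_by_adj} for $\acute{F} \dashv \grave{G}$: it provides the monoidal functor $[\acute{F},\grave{G}] = \XCAT(\acute{F},\grave{G}) : \XCAT(G_*\aeL,G_*\aeL) \rightarrow \XCAT(\aeX,\aeX)$ with the property that for any $\X$-adjunction $f' \dashv g'$ with domain $G_*\aeL$ inducing an $\X$-monad $\TT$ on $G_*\aeL$, the $\X$-monad $[\acute{F},\grave{G}](\TT)$ equals the one induced by the composite of $\acute{F} \dashv \grave{G}$ with $f' \dashv g'$. Taking $f' \dashv g' = G_*(\bF^{\HH}) \dashv G_*(\bG^{\HH})$ and $\TT = G_*(\HH)$ shows that the composite \eqref{eq:comp_with_em_adj} induces the $\X$-monad $[\acute{F},\grave{G}](G_*(\HH))$. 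Taking instead $f' \dashv g' = G_*(\aeL(-,R)) \dashv G_*(\aeL(-,R))$ and $\TT = G_*(\HH)$ shows that the composite $\aeX \xrightarrow{\acute{F}} G_*\aeL \xrightarrow{G_*(\aeL(-,R))} G_*(\aeL^{\op})$ also induces $[\acute{F},\grave{G}](G_*(\HH))$; but by \bref{par:cotensor_dual_of_free} (with $E = R$) this second composite is precisely the hom-cotensor $\X$-adjunction \eqref{eq:hom_cotensor}, whose induced $\X$-monad is $\DD$ by \bref{def:nat_dist}. Hence $\DD = [\acute{F},\grave{G}](G_*(\HH))$, which is part 2, and, by the first computation, $\DD$ is also the $\X$-monad induced by \eqref{eq:comp_with_em_adj}, which is part 1.

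The argument is essentially bookkeeping, so I do not expect a genuine obstacle; the two points requiring care are (i) confirming that $G_*$, as a 2-functor, sends ``the monad induced by an adjunction'' to ``the monad induced by the transported adjunction'' \emph{on the nose}, so that both transported $\X$-adjunctions really induce $G_*(\HH)$ and not merely something isomorphic to it (this matters because \bref{thm:mon_func_detd_by_adj} asserts an equality of monads), and (ii) matching the composite produced by \bref{par:cotensor_dual_of_free} --- including the orientation of $\aeL^{\op}$ and the choice of cotensors $[X,R] = \aeL(FX,R)$ --- with the composite written here, so that the two occurrences of $G_*(\aeL(-,R))$ are literally the left and right legs of the $\sL$-adjunction of \bref{def:dualization} pushed forward along $G_*$. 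Both are routine to check, and once they are verified the content is carried entirely by \bref{thm:mon_func_detd_by_adj} and \bref{par:cotensor_dual_of_free}.
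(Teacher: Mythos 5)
Your proposal is correct and follows essentially the same route as the paper's own proof: both identify the hom-cotensor adjunction \eqref{eq:hom_cotensor} with the composite from \bref{par:cotensor_dual_of_free}, use that the 2-functor $G_*$ transports the dualization and Eilenberg--Moore $\sL$-adjunctions (each inducing $\HH$) to $\X$-adjunctions inducing $G_*(\HH)$, and then apply \bref{thm:mon_func_detd_by_adj} to conclude $\DD = [\acute{F},\grave{G}](G_*(\HH))$ and that $\DD$ is induced by \eqref{eq:comp_with_em_adj}. The two ``points requiring care'' you flag are exactly what the paper relies on implicitly, so no gap remains.
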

\begin{proof}
We can form $\aeL^{\HH}$ since $\sL$ has equalizers (\cite{Dub} II.1).  By definition, $\DD$ is induced by the `hom-cotensor' $\X$-adjunction \eqref{eq:hom_cotensor}, and by \bref{par:cotensor_dual_of_free}, this $\X$-adjunction is equal to the composite
$$
\xymatrix {
\aeX \ar@/_0.5pc/[rr]_{\acute{F}}^(0.4){\eta}^(0.6){\epsilon}^{\top} & & G_*\aeL \ar@/_0.5pc/[ll]_{\grave{G}} \ar@/_0.5pc/[rr]_{G_*(\aeL(-,R))}^(0.4){}^(0.6){}^{\top} & & {G_*(\aeL^\op)\;,} \ar@/_0.5pc/[ll]_{G_*(\aeL(-,R))}
}
$$
in which the rightmost $\X$-adjunction is gotten by applying $G_*$ to the dualization $\sL$-adjunction $\aeL(-,R) \dashv \aeL(-,R)$ \pbref{def:dualization}.  The $\sL$-monad on $\aeL$ induced by the latter $\sL$-adjunction is $\HH$, so the $\X$-monad on $G_*\aeL$ induced by the rightmost $\X$-adjunction is $G_*(\HH)$.  Hence $\DD = [\acute{F},\grave{G}](G_*(\HH))$.  But $\HH$ is equally the $\sL$-monad induced by $\bF^{\HH} \dashv \bG^{\HH}$, so $G_*(\HH)$ is induced by $G_*(\bF^{\HH}) \dashv G_*(\bG^{\HH})$ and hence $\DD$ is induced by the composite \eqref{eq:comp_with_em_adj}.
\end{proof}

\section{Completeness and completion of $\sL$-objects} \label{sec:compl}

\begin{ParSub} \label{par:factn_of_em_adj}
Again working with data as given in \bref{par:smc_adj}, we now suppose that $\sL$ is locally small and finitely well-complete \pbref{rem:fwc}.  Under these hypotheses, we may employ \bref{thm:base_adj_fact} in order to factorize the Eilenberg-Moore $\sL$-adjunction $\xymatrix {\aeL \ar@/_0.5pc/[rr]_{\bF^{\HH}}^(0.4){\partial}^(0.6){}^{\top} & & {\aeL^{\HH}} \ar@/_0.5pc/[ll]_{\bG^{\HH}}}$, yielding $\sL$-adjunctions
\begin{equation}\label{eqn:factn_through_compl_adj}
\xymatrix {
\aeL \ar@/_0.5pc/[rr]_\bK^(0.4){\rho}^(0.6){}^{\top} & & {\teL} \ar@{_{(}->}@/_0.5pc/[ll]_\bJ \ar@/_0.5pc/[rr]_\bQ ^(0.4){\partial'}^(0.6){}^{\top} & & {\aeL^{\HH}} \ar@/_0.5pc/[ll]_\bP 
}
\end{equation}
with $\bJ\bP  = \bG^{\HH}$, $\bQ  = \bF^{\HH}\bJ$, $\bJ$ a full sub-$\sL$-category inclusion, and $\bQ$ conservative.  Further, each component of $\partial'$ is a strong monomorphism in $\sL$, and $\teL = \aeL_{\Sigma_{\bF^{\HH}}}$.
\end{ParSub}

\begin{DefSub}
The leftmost $\sL$-adjunction in \eqref{eqn:factn_through_compl_adj} induces an idempotent $\sL$-monad $\tHH$ on $\aeL$.  As indicated in the Introduction (at \eqref{eqn:completion_refl}), $\tL$ and $\tHH$ determine the notions of \textit{(functional) completeness} and \textit{completion} of $\sL$-objects.
\end{DefSub}

\begin{RemSub}
Since $\bG^{\HH}$ is conservative, we have by \bref{rem:factn_inversion} that $\Sigma_{K} = \Sigma_{\bF^{\HH}} = \Sigma_{\bG^{\HH}\bF^{\HH}} = \Sigma_{H}$ in the notation of  \bref{def:orth_subcat_sigma}.3 --- i.e. $\bK$ inverts exactly the same morphisms as $\bH$.
\end{RemSub}

\begin{PropSub} \label{thm:mnd_morph_i}
There is a morphism of $\sL$-monads $i:\tHH \rightarrow \HH$ whose components are strong monos.  In particular, the diagram
$$
\xymatrix@R=0.5ex {
1_\sL \ar[dr]_{\rho} \ar[rr]^{\partial} &                         & \bH  \\
                                        & \tbH \ar@{>->}[ur]_i    &    
}
$$
commutes.
\end{PropSub}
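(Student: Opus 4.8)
The plan is to obtain $i$ as an instance of the abstract factorization lemma \bref{thm:mnd_morph_from_adj_factn}, applied in the 2-category $\LCAT$ of $\sL$-categories, $\sL$-functors, and $\sL$-natural transformations. I would take the ambient adjunction in \bref{thm:mnd_morph_from_adj_factn} to be the Eilenberg--Moore $\sL$-adjunction $\bF^{\HH} \dashv \bG^{\HH} : \aeL^{\HH} \rightarrow \aeL$ (with unit $\partial$), whose induced $\sL$-monad is $\HH$, and I would take its two-step factorization to be the $\sL$-adjunctions $\bK \dashv \bJ : \teL \rightarrow \aeL$ (with unit $\rho$) and $\bQ \dashv \bP : \aeL^{\HH} \rightarrow \teL$ (with unit $\partial'$) of \bref{par:factn_of_em_adj}, whose induced $\sL$-monads are, respectively, the completion monad $\tHH$ and an $\sL$-monad $\TT'$ on $\teL$. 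The compatibility $\bJ\bP = \bG^{\HH}$ demanded by \bref{thm:mnd_morph_from_adj_factn} is precisely one of the equations recorded in \bref{par:factn_of_em_adj}, so the lemma applies (with $A = \aeL$, $B = \teL$, $C = \aeL^{\HH}$) and hands us a morphism of $\sL$-monads $i : \tHH \rightarrow \HH$.

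Commutativity of the displayed triangle then needs no further argument: by the definition of a morphism of monads \pbref{par:adj_and_mnd}, the monad morphism $i : \tHH \rightarrow \HH$ automatically satisfies $i \cdot \rho = \partial$, where $\rho$ is the unit of $\tHH$ and $\partial$ is the unit of $\HH$.

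The one point that I would treat carefully is the assertion that every component of $i$ is a strong monomorphism. For this I would unwind the construction of $i$ in the proof of \bref{thm:mnd_morph_from_adj_factn}: it is the composite $i = \xi \cdot (\bJ\partial'\bK)$, where $\partial'$ is the unit of $\bQ \dashv \bP$ and $\xi$ is the isomorphism of $\sL$-monads supplied by \bref{thm:uniq_adj} comparing the $\sL$-monad induced by the composite adjunction $\bQ\bK \dashv \bJ\bP$ with the $\sL$-monad $\HH$ induced by $\bF^{\HH} \dashv \bG^{\HH}$ (the two adjunctions sharing the right adjoint $\bJ\bP = \bG^{\HH}$). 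Now \bref{par:factn_of_em_adj}, recording the relevant output of \bref{thm:adj_factn}, tells us that each component of $\partial'$ is a strong monomorphism in $\sL$; since $\bJ$ is a full sub-$\sL$-category inclusion and precomposition with $\bK$ merely reindexes, the components of $\bJ\partial'\bK$ are again strong monomorphisms (they are the morphisms $\partial'_{\bK E}$ for $E \in \aeL$). As $\xi$ is an isomorphism of monads its components are invertible, and strong monomorphisms are closed under composition and contain all isomorphisms; hence each $i_E$ is a strong monomorphism in $\sL$ (equivalently, an $\sL$-strong-mono in $\aeL$, by \bref{thm:ord_str_mono_in_base_is_enriched}).

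In short, the statement should fall out of \bref{thm:mnd_morph_from_adj_factn} together with the strong-mono bookkeeping just described, and I do not expect a serious obstacle; the points worth staying vigilant about are (i) identifying $\partial'$, the unit of the \emph{second} factor, as the transformation with strong-mono components, and (ii) checking that post-composition with the monad isomorphism $\xi$ does not destroy this property. That $\LCAT$ is a large 2-category is harmless, since \bref{thm:mnd_morph_from_adj_factn} and its supporting lemmas are valid in an arbitrary 2-category.
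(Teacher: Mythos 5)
Your proposal is correct and follows essentially the same route as the paper: the existence of $i$ via \bref{thm:mnd_morph_from_adj_factn} applied to the factorization of \bref{par:factn_of_em_adj}, and the strong-mono claim by unwinding \eqref{eqn:comp_morph_mnds} to exhibit $i$ as $\bJ\partial'\bK$ followed by the monad isomorphism $\xi$. The only difference is that you spell out the triangle's commutativity (immediate from the unit condition on monad morphisms) and the closure bookkeeping slightly more explicitly than the paper does.
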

\begin{proof}
The existence of the needed monad morphism $i$ follows from \bref{thm:mnd_morph_from_adj_factn}.  In view of \eqref{eqn:comp_morph_mnds}, we have that the underlying $\sL$-natural transformation of $i$ is a composite
$$\tbH = \bJ\bK \xrightarrow{\bJ \partial' \bK} \bJ\bP\bQ\bK \xrightarrow{\sim} {\bG^{\HH}\bF^{\HH}} = \bH$$
whose second factor is an isomorphism.  But by \bref{par:factn_of_em_adj}, each component of $\bJ \partial'$ is a strong mono, so each component of $\bJ \partial' \bK$ is a strong mono and hence the same is true of $i$.
\end{proof}

\begin{PropSub} \label{thm:compl_sm_adj}
The underlying ordinary adjunction $K \nsststile{}{\rho} J : \tL \hookrightarrow \sL$ of $\bK \nsststile{}{\rho} \bJ$ carries the structure of a symmetric monoidal adjunction, with $\tL$ a closed symmetric monoidal category.  The $\sL$-monad on $\aeL$ induced by the associated $\sL$-adjunction \hbox{$\acute{K} \nsststile{}{\rho} \grave{J} : J_*\left(\aetL\right) \rightarrow \aeL$} \pbref{thm:assoc_enr_adj} is isomorphic to $\tHH$.
\end{PropSub}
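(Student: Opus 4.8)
The plan is to derive this statement as an immediate instance of Corollary \bref{thm:enr_refl_closed}, applied with the base symmetric monoidal closed category taken to be $\sL$ itself (in the role of $\V$).

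First I would check that the leftmost $\sL$-adjunction $\bK \nsststile{}{\rho} \bJ : \teL \hookrightarrow \aeL$ appearing in \eqref{eqn:factn_through_compl_adj} meets the hypotheses of \bref{thm:enr_refl_closed}. By \bref{par:factn_of_em_adj}, $\bJ$ is the inclusion of a full sub-$\sL$-category, and this sub-$\sL$-category is replete: indeed $\teL = \aeL_{\Sigma_{\bF^{\HH}}}$ is, by \bref{def:orth_subcat_sigma}, cut out by a $\V$-orthogonality condition on objects, and whether $f \bot_\V B$ depends only on the isomorphism class of $B$. (The counit of $\bK \dashv \bJ$ is then automatically an isomorphism, as is wanted for the role played by $\sigma$ in \bref{thm:enr_refl_closed}.) Since $\sL$ is symmetric monoidal closed \pbref{par:smc_adj}, \bref{thm:enr_refl_closed} applies with $\V := \sL$ and delivers at once both that the underlying ordinary adjunction $K \nsststile{}{\rho} J : \tL \hookrightarrow \sL$ carries the structure of a symmetric monoidal adjunction with $\tL$ a closed symmetric monoidal category, and that the $\sL$-monad on $\aeL$ induced by $\bK \nsststile{}{\rho} \bJ$ is isomorphic to the one induced by the associated $\sL$-adjunction $\acute{K} \nsststile{}{\rho} \grave{J} : J_*(\aetL) \rightarrow \aeL$ of \bref{thm:assoc_enr_adj}.

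Finally I would invoke the definition of $\tHH$: it is precisely the idempotent $\sL$-monad on $\aeL$ induced by the $\sL$-adjunction $\bK \nsststile{}{\rho} \bJ$. Hence the isomorphism of $\sL$-monads supplied by \bref{thm:enr_refl_closed} is exactly the asserted isomorphism between $\tHH$ and the $\sL$-monad induced by $\acute{K} \nsststile{}{\rho} \grave{J}$. I do not anticipate a substantive obstacle; the proof is essentially a matter of matching the present data to the hypotheses and conclusion of \bref{thm:enr_refl_closed}, the only point meriting a word of justification being the repleteness of $\teL$, which is immediate from its description as $\aeL_{\Sigma_{\bF^{\HH}}}$.
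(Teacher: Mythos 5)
Your proposal is correct and matches the paper's proof, which simply reads ``This follows from \bref{thm:enr_refl_closed}'' with $\V:=\sL$; your additional verification of the hypotheses (fullness from \bref{par:factn_of_em_adj} and repleteness from the orthogonality description $\teL=\aeL_{\Sigma_{\bF^{\HH}}}$) is a sound and welcome elaboration of that one-line citation.
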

\begin{proof}
This follows from \bref{thm:enr_refl_closed}.
\end{proof}

\section{Distributions via completion}

As in \bref{sec:compl}, we work with data as given in \bref{par:smc_adj}, again supposing that $\sL$ is locally small and finitely well-complete \pbref{rem:fwc}.

\begin{ParSub} \label{par:mnd_morph_tdd_to_dd}
Applying the 2-functor $G_*:\LCAT \rightarrow \XCAT$ to the monad morphism $i:\tHH \rightarrow \HH$ \pbref{thm:mnd_morph_i}, we obtain a monad morphism $G_*(i):G_*(\tHH) \rightarrow G_*(\HH)$ in $\XCAT$.  Next, applying the monoidal functor $[\acute{F},\grave{G}]:\XCAT(G_*\aeL,G_*\aeL) \rightarrow \XCAT(\aeX,\aeX)$ \pbref{thm:mon_func_detd_by_adj}, we obtain a morphism of $\X$-monads
\begin{equation}\label{eq:morph_td_to_d}\tDD := [\acute{F},\grave{G}](G_*(\tHH)) \xrightarrow{[\acute{F},\grave{G}](i) = GiF} [\acute{F},\grave{G}](G_*(\HH)) = \DD\;,\end{equation}
where the rightmost equation holds by \eqref{eq:dist_via_dd}.  The underlying ordinary functor of the $\X$-monad $\tDD$ thus defined is therefore $\tD = G\tH F$, whereas $D = GHF$.  In contrast with  \bref{prop:dist_via_dd}.1, $\tDD$ is induced by the composite $\X$-adjunction
\begin{equation}\label{eq:comp_adj_ind_ddt}
\xymatrix {
\aeX \ar@/_0.5pc/[rr]_{\acute{F}}^(0.4){\eta}^(0.6){\epsilon}^{\top} & & G_*\aeL \ar@/_0.5pc/[ll]_{\grave{G}} \ar@/_0.5pc/[rr]_{G_*(\bK)}^(0.4){}^(0.6){}^{\top} & & {G_*\teL\;.} \ar@/_0.5pc/[ll]_{G_*(\bJ)}
}
\end{equation}
\end{ParSub}

\begin{PropSub}\label{thm:td_comm}
The $\X$-monad $\tDD$ is commutative.
\end{PropSub}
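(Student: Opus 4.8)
The plan is to identify $\tDD$, up to isomorphism of $\X$-monads, with the commutative $\X$-monad $\TT^c$ \pbref{thm:comm_sm_mnd} associated to the symmetric monoidal monad $\TT$ induced by the composite symmetric monoidal adjunction $KF \dashv GJ : \tL \rightarrow \X$, and then to invoke the invariance of commutativity under isomorphism \pbref{rem:comm_inv_under_iso}.

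First I would note that, by \bref{thm:compl_sm_adj}, the ordinary adjunction $K \dashv J : \tL \hookrightarrow \sL$ underlying $\bK \dashv \bJ$ underlies a symmetric monoidal adjunction with $\tL$ symmetric monoidal closed; composing it with the given $F \dashv G : \sL \rightarrow \X$ yields a symmetric monoidal adjunction $KF \dashv GJ : \tL \rightarrow \X$ between symmetric monoidal closed categories, with induced symmetric monoidal monad $\TT$ on $\X$. By \bref{thm:enradj_assoc_smadj_ind_commmnd}, the $\X$-monad induced by the associated $\X$-adjunction $\acute{(KF)} \dashv \grave{(GJ)} : (GJ)_*\aetL \rightarrow \aeX$ is exactly $\TT^c$, hence commutative. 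By \bref{thm:compn_assoc_enr_adj}, applied to the symmetric monoidal adjunctions $F \dashv G$ and $K \dashv J$, this $\X$-adjunction is isomorphic to the composite $\X$-adjunction $\aeX \xrightarrow{\acute{F}} G_*\aeL \xrightarrow{G_*\acute{K}} G_*J_*\aetL$ (with right adjoints $\grave{G}$ and $G_*\grave{J}$), so by \bref{par:adj_and_mnd} the $\X$-monad induced by this composite is isomorphic to $\TT^c$. On the other hand, \bref{thm:mon_func_detd_by_adj} identifies the $\X$-monad induced by this composite with $[\acute{F},\grave{G}](G_*(\mathbb{S}))$, where $\mathbb{S}$ is the $\sL$-monad on $\aeL$ induced by $\acute{K} \dashv \grave{J}$; by the second assertion of \bref{thm:compl_sm_adj} we have $\mathbb{S} \cong \tHH$ as $\sL$-monads, and since the $2$-functor $G_*$ and the monoidal functor $[\acute{F},\grave{G}]$ \pbref{thm:mon_func_detd_by_adj} both preserve monad isomorphisms, it follows that $[\acute{F},\grave{G}](G_*(\mathbb{S})) \cong [\acute{F},\grave{G}](G_*(\tHH)) = \tDD$, the last equality being the definition of $\tDD$ \pbref{par:mnd_morph_tdd_to_dd}. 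Chaining these isomorphisms gives $\tDD \cong \TT^c$, so $\tDD$ is commutative by \bref{rem:comm_inv_under_iso}.

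The proof is thus a matter of assembling earlier results, and the single point demanding attention is the matching, in the step using \bref{thm:mon_func_detd_by_adj}, of the $\X$-monad on $G_*\aeL$ arising from $G_*\acute{K} \dashv G_*\grave{J}$ with $G_*(\tHH)$: these coincide only up to the isomorphism of $\sL$-monads furnished by \bref{thm:compl_sm_adj} (which in turn encodes that $\acute{K} \dashv \grave{J}$ and $\bK \dashv \bJ$ have isomorphic but not literally identical domains), so it is important that the whole argument is conducted up to isomorphism of $\X$-monads --- legitimate precisely because commutativity is an isomorphism-invariant property \pbref{rem:comm_inv_under_iso}. One should also confirm at the outset that the composite $KF \dashv GJ$ really is a symmetric monoidal adjunction with both $\X$ and $\tL$ symmetric monoidal closed, so that \bref{thm:enradj_assoc_smadj_ind_commmnd} and \bref{thm:compn_assoc_enr_adj} are applicable; this is immediate from \bref{thm:compl_sm_adj} together with the standing hypotheses of \bref{par:smc_adj}.
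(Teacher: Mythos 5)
Your proposal is correct and follows essentially the same route as the paper's own proof: both arguments pass through the symmetric monoidal adjunction $KF \dashv GJ$ via \bref{thm:compl_sm_adj}, \bref{thm:compn_assoc_enr_adj}, and \bref{thm:enradj_assoc_smadj_ind_commmnd}, and conclude by isomorphism-invariance of commutativity \pbref{rem:comm_inv_under_iso}. The only difference is presentational (you chain the isomorphisms starting from $\TT^c$ rather than from $\tDD$, and you make explicit the preservation of monad isomorphisms by $G_*$ and $[\acute{F},\grave{G}]$, which the paper leaves tacit).
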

\begin{proof}
Recall that the $\sL$-monad $\tHH$ is induced by an $\sL$-adjunction \hbox{$\bK \nsststile{}{\rho} \bJ : \teL \hookrightarrow \aeL$} whose underlying ordinary adjunction $K \nsststile{}{\rho} J : \tL \hookrightarrow \sL$ carries the structure of a symmetric monoidal adjunction, with $\tL$ closed \pbref{thm:compl_sm_adj}.  This symmetric monoidal adjunction determines an $\sL$-adjunction \hbox{$\acute{K} \nsststile{}{\rho} \grave{J} : J_*\left(\aetL\right) \rightarrow \aeL$} whose induced $\sL$-monad $\tHH'$ is isomorphic to $\tHH$ \pbref{thm:compl_sm_adj}.   Hence we have that $\tDD = [\acute{F},\grave{G}](G_*(\tHH)) \cong [\acute{F},\grave{G}](G_*(\tHH')) =: \tDD'$, so it suffices (by \bref{rem:comm_inv_under_iso}) to show that $\tDD'$ is commutative.

Now $\tDD'$ is the $\X$-monad induced by the composite $\X$-adjunction
\begin{equation}
\xymatrix {
\aeX \ar@/_0.5pc/[rr]_{\acute{F}}^(0.4){\eta}^(0.6){\epsilon}^{\top} & & G_*\aeL \ar@/_0.5pc/[ll]_{\grave{G}} \ar@/_0.5pc/[rr]_{G_*\acute{K}}^(0.4){}^(0.6){}^{\top} & & {G_*J_*(\aetL)\;,} \ar@/_0.5pc/[ll]_{G_*(\grave{J})}
}
\end{equation}
which by \bref{thm:compn_assoc_enr_adj} is isomorphic to the $\X$-adjunction
\begin{equation}\label{eq:enradj_kf_gj}
\xymatrix {
\aeX \ar@/_0.5pc/[rr]_{\acute{(KF)}}^(0.4){}^(0.6){}^{\top} & & (GJ)_*\aetL \ar@/_0.5pc/[ll]_{\grave{(GJ)}}
}
\end{equation}
associated to the composite symmetric monoidal adjunction
$$
\xymatrix {
\X \ar@/_0.5pc/[rr]_F^(0.4){\eta}^(0.6){\epsilon}^{\top} & & \sL \ar@/_0.5pc/[ll]_G \ar@/_0.5pc/[rr]_K^(0.4){\rho}^(0.6){}^{\top} & & {\tL\;.} \ar@{_{(}->}@/_0.5pc/[ll]_J
}
$$
Hence $\tDD'$ is isomorphic to the $\X$-monad $\tDD''$ induced by \eqref{eq:enradj_kf_gj}. and by \bref{thm:enradj_assoc_smadj_ind_commmnd}, $\tDD''$ is commutative, so $\tDD'$ is commutative.
\end{proof}

\begin{LemSub} \label{thm:partial_fx_inv_by_h}
Suppose that the hypotheses of Theorem \bref{thm:general_fubini} hold.  Then for each object $X$ of $\X$, the morphism $\partial_{FX}:FX \rightarrow HFX = (FX)^{**}$ in $\sL$ is inverted by $H:\sL \rightarrow \sL$.
\end{LemSub}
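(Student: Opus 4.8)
The plan is to reduce the claim, via the hypothesis that $[X,R]$ is reflexive, to a single standard identity for the dualization $\sL$-adjunction of \bref{def:dualization}. Write $(-)^* := \aeL(-,R)$, so that $H = (-)^{**}$ and $H(f) = f^{**} = (f^*)^*$ for any morphism $f$ of $\sL$; thus the assertion to be proved is that $(\partial_{FX})^{**}$ is an isomorphism in $\sL$.

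First I would unwind the reflexivity hypothesis at $FX$. By \bref{thm:assoc_enr_adj}.2 the cotensor $[X,R]$ in $G_*\aeL$ may be taken to be the internal hom $\aeL(FX,R) = (FX)^*$ (naturally, as in \bref{par:cotensor_dual_of_free}), so the hypothesis of \bref{thm:general_fubini} says precisely that $\partial_{(FX)^*}\colon (FX)^* \to (FX)^{***}$ is an isomorphism. Then I would invoke the triangle identity for the self-adjunction $(-)^* \dashv (-)^*$, which induces $\HH$ and hence has unit $\partial$: at the level of underlying $\sL$-morphisms it gives, for every object $E$, that $\partial_{E^*}$ is a section of $(\partial_E)^*$, i.e.\ $(\partial_E)^* \circ \partial_{E^*} = 1_{E^*}$. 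Taking $E = FX$, the morphism $(\partial_{FX})^*$ is a retraction of the isomorphism $\partial_{(FX)^*}$ and hence is itself an isomorphism (with inverse $\partial_{(FX)^*}$); applying $(-)^*\colon \sL \to \sL^{\op}$ to this isomorphism shows that $H(\partial_{FX}) = (\partial_{FX})^{**} = \bigl((\partial_{FX})^*\bigr)^*$ is an isomorphism, as required.

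The only delicate point --- and essentially the sole obstacle --- is verifying that the triangle identity of $(-)^* \dashv (-)^*$ really does read $(\partial_E)^* \circ \partial_{E^*} = 1_{E^*}$, with the composite and the various opposite categories oriented correctly, and that it descends to an equation of ordinary morphisms of $\sL$. This is classical (it expresses that the counit of this self-adjunction is again given by $\partial$, equivalently that the canonical map $E^* \to E^{***}$ is always split), but since \bref{def:dualization} records only the unit $\partial$ of $\HH$, it is worth making the orientation explicit before using it. Everything else is formal and invokes no hypothesis beyond the reflexivity of $[X,R]$.
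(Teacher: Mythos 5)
Your proposal is correct and follows essentially the same route as the paper: both arguments show that $\partial_{FX}^*$ is a retraction of the isomorphism $\partial_{(FX)^*}$ (the paper verifies this identity by an explicit transpose computation with $\Ev$, which is exactly the triangle identity you cite for the self-adjunction $(-)^* \dashv (-)^*$), conclude that $\partial_{FX}^*$ is an isomorphism, and then apply $(-)^*$ once more. The ``delicate point'' you flag is precisely the small computation the paper carries out, and your orientation of the triangle identity is the right one.
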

\begin{proof}
Having chosen the cotensors $[X,R]$ in $G_*\aeL$ as in \bref{thm:assoc_enr_adj}.\textnormal{2}, we have that
$$[X,R] = \aeL(FX,R) = (FX)^*\;,$$
so since $[X,R]$ is reflexive, the unit component
$$\partial_{(FX)^*}:(FX)^* \rightarrow H((FX)^*) = (FX)^{***}$$
is an isomorphism.

But $\partial_{(FX)^*}$ has a retraction $\partial_{FX}^*$,
as follows.  Indeed, as
$$\partial_{(FX)^*}:(FX)^* \rightarrow \aeL(\aeL((FX)^*,R),R)$$
is the transpose of 
$$\Ev:(FX)^* \otimes \aeL((FX)^*,R) \rightarrow R\;,$$
we find that the composite
$$(FX)^* \xrightarrow{\partial_{(FX)^*}} \aeL(\aeL((FX)^*,R),R) \xrightarrow{\partial_{FX}^* \;=\; \aeL(\partial_{FX},R)} \aeL(FX,R) = (FX)^*$$
has transpose
$$(FX)^* \otimes FX \xrightarrow{1 \otimes \partial_{FX}} (FX)^* \otimes \aeL((FX)^*,R) \xrightarrow{\Ev} R\;,$$
which is equal to $\Ev:\aeL(FX,R) \otimes FX \rightarrow R$, so that $\partial_{FX}^* \cdot \partial_{(FX)^*} = 1_{(FX)^*}$ as needed.

Hence, since $\partial_{(FX)^*}$ is an isomorphism, its retraction $\partial_{FX}^*:(FX)^{***} \rightarrow (FX)^*$ is an isomorphism.  Applying $(-)^*:\sL^\op \rightarrow \sL$, we obtain an isomorphism
$$H\partial_{FX} = \partial_{FX}^{**} : HFX = (FX)^{**} \rightarrow (FX)^{****} = HHFX\;.$$
\end{proof}

\begin{PropSub} \label{thm:dist_via_compl}
Suppose that the hypotheses of Theorem \bref{thm:general_fubini} hold.  Then the morphism of $\X$-monads $GiF:\tDD \rightarrow \DD$ \eqref{eq:morph_td_to_d} is an isomorphism.
\end{PropSub}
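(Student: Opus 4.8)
The plan is to prove the stronger statement that for every object $X$ of $\X$ the component $i_{FX} : \tH FX \to HFX$ is already an isomorphism \emph{in $\sL$}; applying the functor $G$ then shows that each $G(i_{FX})$ is an isomorphism in $\X$, and since the component of $[\acute{F},\grave{G}](i) = GiF$ at $X$ is precisely $G(i_{FX})$ (the underlying $\X$-functors of $\tDD$ and $\DD$ being $G\tH F$ and $GHF$), and a morphism of $\X$-monads is invertible exactly when its underlying $\X$-natural transformation is pointwise invertible \pbref{par:adj_and_mnd}, this will suffice.

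First I would record that both the source and target of $i_{FX}$ are \emph{complete}, i.e.\ lie in $\teL$. On the one hand $\teL = \aeL_{\Sigma_{\bF^{\HH}}}$ and $\bG^{\HH} = \bJ\bP$ factors through the inclusion $\bJ : \teL \hookrightarrow \aeL$, so every object in the image of $\bG^{\HH}$ --- in particular $HFX = \bG^{\HH}\bF^{\HH}FX$ --- lies in $\teL$; on the other hand $\tbH = \bJ\bK$ (as in the proof of \bref{thm:mnd_morph_i}), so $\tH FX = J(KFX)$ lies in $\teL$ as well. Next I would show that $i_{FX}$ is inverted by $H$: by \bref{thm:mnd_morph_i} we have $i \cdot \rho = \partial$, hence $i_{FX} \cdot \rho_{FX} = \partial_{FX}$; the unit component $\rho_{FX}$ is inverted by the reflector $K$, so it lies in $\Sigma_K = \Sigma_{\bF^{\HH}} = \Sigma_H$ (Remark following \bref{par:factn_of_em_adj}), while $\partial_{FX}$ lies in $\Sigma_H$ by \bref{thm:partial_fx_inv_by_h}. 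Applying $H$ to $i_{FX} \cdot \rho_{FX} = \partial_{FX}$ and cancelling the isomorphism $H\rho_{FX}$ shows that $Hi_{FX}$ is an isomorphism, i.e.\ $i_{FX} \in \Sigma_H = \Sigma_{\bF^{\HH}}$.

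Finally, $i_{FX}$ is a morphism lying in $\Sigma_{\bF^{\HH}}$ whose source and target both lie in $\teL = \aeL_{\Sigma_{\bF^{\HH}}}$, so \bref{thm:orth_of_morphs_in_orth_subcat} gives $i_{FX} \downarrow_{\sL} i_{FX}$ in $\aeL$, hence $i_{FX} \downarrow i_{FX}$ in $\sL$ \pbref{rem:enr_notions}; a morphism orthogonal to itself is an isomorphism (apply the diagonal fill-in to the commutative square $i_{FX} \cdot 1 = 1 \cdot i_{FX}$ to produce a two-sided inverse), so $i_{FX}$ is invertible, as required. The only step carrying any real content is the one showing $i_{FX} \in \Sigma_H$, and there the essential computation --- that $\partial_{FX}$ is inverted by $H$, which is the one point at which reflexivity of the cotensors $[X,R]$ is used --- has already been carried out in \bref{thm:partial_fx_inv_by_h}; what remains is only the bookkeeping identification of the classes $\Sigma_K$, $\Sigma_{\bF^{\HH}}$, $\Sigma_H$ together with the routine self-orthogonality argument above.
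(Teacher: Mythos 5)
Your proof is correct and follows the same skeleton as the paper's: reduce to showing each $i_{FX}$ is invertible in $\sL$, observe that $\tH FX$ and $HFX$ both lie in $\teL = \aeL_{\Sigma_H}$, feed Lemma \pbref{thm:partial_fx_inv_by_h} into the orthogonality lemma \pbref{thm:orth_of_morphs_in_orth_subcat}, and conclude. The only divergence is in the endgame. The paper applies \pbref{thm:orth_of_morphs_in_orth_subcat} to the pair $(\partial_{FX}, i_{FX})$: the diagonal fill-in of the triangle $i_{FX}\cdot\rho_{FX}=\partial_{FX}$ produces a section $s$ with $i_{FX}\cdot s = 1$, and invertibility then follows because $i_{FX}$ is a (strong) mono by \pbref{thm:mnd_morph_i}. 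You instead first promote $i_{FX}$ itself into $\Sigma_H$ by two-out-of-three (using $\rho_{FX}\in\Sigma_K=\Sigma_{\bF^{\HH}}=\Sigma_H$ together with $\partial_{FX}\in\Sigma_H$) and then invoke self-orthogonality of a morphism of $\aeL_{\Sigma_H}$ that lies in $\Sigma_H$. Both routes are valid; yours dispenses with the mono-ness of $i$ established in \pbref{thm:mnd_morph_i} at the cost of the (easy, and already recorded in the remark following \pbref{par:factn_of_em_adj}) identification $\Sigma_K=\Sigma_H$, and it yields the slightly cleaner general principle that a $\Sigma$-local morphism between $\Sigma$-local objects is invertible. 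No gaps.
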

\begin{proof}
It suffices to show that the underlying ordinary natural transformation $GiF : G\tH F = \tD \rightarrow D = GHF$ is an isomorphism.  Letting $X$ be an object of $\X$, it therefore suffices to show that $i_{FX} : \tH FX \rightarrow HFX$ is an isomorphism in $\sL$.  To this end, note that the periphery of the diagram
$$
\xymatrix@R=2ex {
{FX} \ar[r]^{\rho_{FX}} \ar[d]_{\partial_{FX}} & {\tH FX} \ar[d]^{i_{FX}} \\
{HFX} \ar@2{-}[r] \ar@{-->}[ur]^s                    & {HFX}
}
$$
commutes since $i \cdot \rho = \partial$ \pbref{thm:mnd_morph_i}.  By \bref{thm:partial_fx_inv_by_h}, $\partial_{FX} \in \Sigma_{H} $.  Also, the morphism $i_{FX}$ lies in $\teL = \aeL_{\Sigma_{H}}$, since both $\tH FX$ and $HFX$ lie in $\tL$ (the latter since $\bH $ factors through $\tL$ as $\bH = \bG^{\HH}\bF^{\HH} = \bJ \bP \bF^\HH$ \pbref{par:factn_of_em_adj}).  Hence, by \bref{thm:orth_of_morphs_in_orth_subcat}, we have that $\partial_{FX} \downarrow i_{FX}$, so there is a unique morphism $s$ in $\sL$ making the given diagram commute, whence in particular $i_{FX} \cdot s = 1_{HFX}$.  But $i_{FX}$ is a mono \pbref{thm:mnd_morph_i}, so $i_{FX}$ is an isomorphism.
\end{proof}

Hence, under the hypotheses of Theorem \bref{thm:general_fubini}, we have an isomorphism of $\X$-monads $\DD \cong \tDD$ with $\tDD$ commutative \pbref{thm:td_comm}, so $\DD$ is commutative (by \bref{rem:comm_inv_under_iso}) and Theorem \bref{thm:general_fubini} is proved.

\bibliographystyle{amsplain}
\bibliography{Fubini}

\end{document}